%% file: main.tex
\documentclass[final]{article}
\usepackage[colorlinks = true, allcolors = blue]{hyperref}
\usepackage[leqno]{amsmath}
\usepackage{mathtools, booktabs, etoolbox}
\usepackage{amssymb, amsfonts, stmaryrd, graphicx, 
cleveref, physics, tikz, tcolorbox, nicefrac, doi}
\usepackage{subcaption, graphicx, float}
\usepackage[numbers,sort]{natbib}
\usepackage[english]{babel}
\usepackage{accents}
\usepackage[letterpaper, top = 2cm,bottom = 2cm, left = 3cm, right = 3cm, marginparwidth = 1.75cm]{geometry}
\usepackage[sc]{mathpazo}
\usepackage[parfill]{parskip}
\usepackage{algorithm}
\usepackage{algpseudocode}
\newcommand{\fullplot}[1]{%
    \makebox[\linewidth][c]{\includegraphics[width=1.08\linewidth]{#1}}%
}
\input{macros}

\input{theoremsetup}

\title{Multigrid Preconditioning for FEEC using Mass-Lumping and Transforming Smoothers}
\date{\today}
\author{Radovan Dabeti\'c\thanks{SAM, D-MATH, ETH Zurich, CH-8092 Zürich, Switzerland, \texttt{rdabetic@sam.math.ethz.ch}}}
\begin{document}

\maketitle

\begin{abstract}
    For PDEs naturally posed in the de Rham complex, structure-preserving mixed and saddle-point finite element discretizations typically produce indefinite linear systems. We propose a multigrid preconditioning framework that combines mass-lumped (explicitly invertible) FEEC mass matrices with transforming smoothers that map the operator to a block form with positive definite diagonal blocks, enabling Gauss-Seidel-type relaxation on the transformed system. Under mild $h$-uniform norm-equivalence assumptions (and for trivial topology), we prove stability of the mass-lumped systems, and by extension spectral equivalence between the mass-lumped and original FEEC operators, which motivates using multigrid cycles designed for the mass-lumped operators as preconditioners for the consistent FEEC systems. While our primary focus is on algorithmic design rather than formal convergence theory, extensive numerical experiments on the Hodge-Dirac operator, mixed Hodge-Laplacians, and a magnetostatics saddle-point system in 2D and 3D demonstrate the robustness of the approach.
\end{abstract}
{\small \textbf{Keywords}: Multigrid, Mixed Finite Elements, Saddle-Point Systems, Transforming Smoothers, Distributive Relaxation, Hodge-Dirac Operator, Hodge-Laplacian, Maxwell's Equations, Magnetostatics, Boundary Value Problem, FEEC, DEC}

\section{Introduction}\label{section:introduction}
As elucidated in \cite{CW18}, solving the linear system arising from a finite-element discretization of the mixed formulation of the Hodge-Laplacian in $H_0(\CURL)$ requires special care due to the indefiniteness and saddle-point form of the system. In \cite{CW18}, a multigrid approach using a variable V-cycle on the Schur complement is given and under some assumptions, it is proven that the condition number of the preconditioned system is $\mathcal{O}(1)$.

In this article, we propose an alternative multigrid strategy that utilizes \emph{transforming smoothers} on a mass-lumped system, in which the mass matrices possess explicit inverses. Our approach is motivated by the work in \cite{Dabetic2025}, where spectral equivalence was shown between the systems obtained from FEEC and Discrete Exterior Calculus (DEC) discretizations of the Hodge–Dirac operator, and multigrid schemes based on transforming smoothers were observed to be efficient in practice.

To illustrate the mechanism of transforming smoothers, consider the concrete example of the mixed formulation of the Hodge-Laplacian on 1-forms. Formally, the system has the form (in vector proxies)
\begin{equation}\label{eq:hcurl_lap_strong_mixed}
    \begin{pmatrix}
        -I & -\DIV \\
        \GRAD & \CURL\CURL
    \end{pmatrix}.
\end{equation}
On the discrete level, we get a symmetric, but indefinite, linear system, hence smoothers like Gauss-Seidel or Jacobi cannot be expected to work.

At the continuous level, this operator can be (at least formally) transformed as follows:
\begin{equation}\label{eq:intro_transforming_smoother_example}
    \begin{pmatrix}
        -I & -\DIV \\
        \GRAD & \CURL\CURL
    \end{pmatrix}
    \begin{pmatrix}
        -I & -\DIV \\
        \GRAD & I
    \end{pmatrix}
    =
    \begin{pmatrix}
        I - \DIV\GRAD & 0 \\
        -\GRAD & \CURL\CURL - \GRAD\DIV
    \end{pmatrix}
\end{equation}
If the discrete system mimics this behavior, we can use this transformed system for smoothening, as the transformed system is lower-block-triangular and the diagonal blocks are positive definite.

The necessity of mass-lumping arises naturally when attempting to replicate this continuous composition at the discrete level. Let $V_h$ denote the chosen finite element space, and let $V_h'$ be its dual space. The standard Galerkin discretizations of the system and the transformation yield discrete operators $\mathsf{A}_h: V_h \to V_h'$ and $\mathsf{T}_h: V_h \to V_h'$, respectively. Consequently, the direct composition $\mathsf{A}_h \mathsf{T}_h$ is ill-defined: the range of $\mathsf{T}_h$ lies in the dual space $V_h'$, whereas the domain of $\mathsf{A}_h$ is $V_h$. 

To bridge these spaces, we must interpose the discrete Riesz isomorphism $\mathsf{R}_h: V_h' \to V_h$, yielding the well-defined composition $\mathsf{A}_h \mathsf{R}_h \mathsf{T}_h$. In the matrix-vector setting, evaluating this exact discrete Riesz map is equivalent to multiplying by the inverse of the consistent mass matrix. Because the inverse of the mass matrix is dense in general, computing the exact composition is prohibitively expensive. This necessitates approximating the Riesz map by replacing the consistent mass matrix with a diagonal one. The mass-lumping will (in general) not lead to a consistent discretization, but it may still be useful as a preconditioner, as will be motivated in \Cref{section:discrete}.

We will employ this approach to solve problems for the Hodge-Dirac operator, Hodge-Laplacian and a saddle-point problem arising from Maxwell's equations (the magnetic potential in the magnetostatic case in the Coulomb gauge), which all fit into the framework of FEEC.

In \Cref{section:framework}, we introduce notation relating to differential forms and state the problems of interest.

In \Cref{section:discrete}, the problems on the discrete level are formulated and it is proven that under relatively mild conditions on the mass-lumping, we have spectral equivalence of the mass-lumped and original problems.

In \Cref{section:mg}, the multigrid algorithms and smoothing procedures are described in detail. Finally, we show some empirical results in \Cref{section:results}, demonstrating that this is a valid approach and that it merits further investigation.

\section{Continuous Framework}\label{section:framework}
As we will be mostly interested in problems involving \emph{essential} boundary conditions, we will also work with spaces involving these boundary conditions. However, most results can be carried over to the case of natural boundary conditions with minor modifications.

Let $\Omega\subset\R^n$ be a bounded, Lipschitz, polytopal, and, for the sake of simplicity, \emph{topologically trivial} domain.

\begin{remark}
    Though we only deal with the case of trivial topologies, it is straightforward to extend the stability arguments (using \Cref{proposition:dec_poincare}) for the discrete systems to non-trivial topologies, once we impose the solutions be orthogonal to harmonic forms.
\end{remark}

\subsection{Differential Forms}
Let $\XLambdak[][k]$ denote the space of smooth $k$-forms on $\Omega$.
% \ footnote{The notation in this manuscript is largely adopted from \cite{GUP25}.}
As in \cite{GUP25} the exterior derivative operators are denoted by
$\d[k]: \XLambdak[][k]\to\XLambdak[][k + 1]$, $0\leq k<n$, the (Euclidean) Hodge star
operators by $\star_k$ and the codifferential operators by
$\delta_k := (-1)^k\star_{k - 1}^{-1} \d[n-k]\star_k: \XLambdak[][k]\to\XLambdak[][k -
1], k = 1, \dots, n$. The Hodge star operators induce inner products on $\XLambdak[][k]$.
\begin{definition}
  \label{def:1}
  The $L^2$ inner product on two $k$-forms $\omega$ and $\mu$ is given by 
  \begin{gather*}
    \inner{\omega}{\mu}_{L^2\XLambdak[][k]} := \int_\Omega\omega\wedge\star\mu.
  \end{gather*}
\end{definition}

We denote by $\XLambdak := \bigoplus_{k = 0}^n \XLambdak[][k]$ the exterior
algebra of (smooth) differential forms on $\Omega$ and write 
\begin{gather}
  \label{ddd}
  \d :=
  \begin{pmatrix}
    0 &  & \\
    \d[0] & 0 & \\
    & \d[1] & 0 & \\
    && \ddots & \ddots
  \end{pmatrix}, \quad \delta :=
  \begin{pmatrix}
    0 & \delta_1 & \\
    & 0 & \delta_2 \\
    & & 0 & \ddots \\
    && & \ddots
  \end{pmatrix}
\end{gather}
for the exterior derivative and codifferential on $\XLambdak$. We equip
$\XLambdak$ with the natural Hilbert space structure by combining the inner products
from \Cref{def:1}. For $\mathfrak{u} \equiv (u_0, \dots, u_n), \mathfrak{v} \equiv (v_0, \dots, v_n)\in\XLambdak$
we set
\begin{gather*}
  \inner{\mathfrak{u}}{\mathfrak{v}}_{\XLambdak[L^2]} := \sum_{k = 0}^n
  \inner{u_k}{v_k}_{\XLambdak[L^2][k]}\;.
\end{gather*}
Write $L^2\XLambdak:= \bigoplus_{k = 0}^n L^2\XLambdak[][k]$, where
$L^2\XLambdak[][k]$ is the space of square-integrable $k$-forms, i.e.\ $k$-forms with
coefficients in $L^2(\Omega)$.

Also refer to \cite[Section 6.2.6]{Arnold2018}, where Sobolev spaces of differential forms
are introduced. Let
\begin{gather*}
  \XLambdak[H][k] := \left\{ u_k\in \XLambdak[L^2][k]: \d[k] u_k\in
    \XLambdak[L^2][k + 1]\right\}\;,
\end{gather*}
and define
$\mathring{V} := \bigoplus_{k = 0}^{n-1}\mathring{H}\XLambdak[][k] \oplus
L^2_*\XLambdak[][n]$, where
\begin{gather}
  L^2_*\XLambdak[][n] := \left\{v\in L^2\XLambdak[][n]:
  \int_\Omega v = 0 \right\}\;,
\end{gather}
and $\mathring{H}\XLambdak[][k]$ is the
space of functions in $H\XLambdak[][k]$ with vanishing trace on $\partial\Omega$,
see \cite[Section 6.2.6]{Arnold2018}. Also let $\XLambdak[H], \XLambdak[H^*]$ be the domain of
$\d$ and $\delta$ respectively, see also \cite[Section 6.2.6]{Arnold2018}.

\subsection{Hodge-Dirac Operator}

\begin{definition}
  \label{def:D}
  The Hodge-Dirac operator is $\DD := \d + \delta$ with domain of definition 
  $\mathcal{D}(\DD) := \mathring{H}\Lambda(\Omega)\cap H^*\Lambda(\Omega)$, where the domain of $\d$ is
  $\mathring{H}\Lambda(\Omega)$ and that of $\delta$ is $H^*\Lambda(\Omega)$. 
\end{definition}

Taking the cue from \cite{Leopardi2016} we put the focus on the following boundary
value problem\footnote{Note that the boundary conditions are included implicitly in the
domain of the operator.} for the Dirac operator: Given $\mathfrak{f} = (f_0, \dots, f_n) \in L^2\XLambdak$, seek
$\mathfrak{u}\in\mathcal{D}(\DD)\cap(\ker\DD)^\perp, \mathfrak{p}\in\ker\DD$ such that
\begin{equation}\label{eq:strong_form}
  \DD\mathfrak{u} + \mathfrak{p} = \mathfrak{f}.
\end{equation}
Corollary 8 of \cite{Leopardi2016} tells us the well-posedness of the following weak form of
\eqref{eq:strong_form}: Given $\mathfrak{f}\in L^2\XLambdak$, seek
$\mathfrak{u}\in\mathring{H}\XLambdak, \mathfrak{p}\in\ker\DD$ such that
\begin{align}\label{eq:var}
  \begin{aligned}
    \inner{\d\mathfrak{u}}{\mathfrak{v}}_{\XLambdak[L^2]} +
    \inner{\mathfrak{u}}{\d\mathfrak{v}}_{\XLambdak[L^2]} +
    \inner{\mathfrak{p}}{\mathfrak{v}}_{\XLambdak[L^2]} &
    = \inner{\mathfrak{f}}{\mathfrak{v}}_{\XLambdak[L^2]} &&
    \forall\mathfrak{v}\in\mathring{H}\Lambda(\Omega), \\
    \inner{\mathfrak{u}}{\mathfrak{v}}_{\XLambdak[L^2]} & = 0 &&
    \forall\mathfrak{v}\in\ker\DD.
  \end{aligned}
\end{align}
As we are working with a domain with trivial topology, $\ker\DD$ (the space of
harmonic forms) is trivial (see \cite[Section 4.3]{Arnold2018} for more information)
except for constant $n$-forms, i.e.\ $\ker\DD|_{\mathring{V}} = \{0\}$, so that we can
consider the following simpler problem: Given $\mathfrak{f}\in L^2\XLambdak$ with
$\int_\Omega f_n = 0$\footnote{Or given the general case, we can recover
  $\mathfrak{p}$ by taking the mean of the $n$-form in $\mathfrak{f}$ and then
  subtract the mean to get a suitable right-hand side.}, seek
$\mathfrak{u}\in\mathring{V}$ such that
\begin{gather}
  \label{eq:dirac_weak}
  \inner{\d\mathfrak{u}}{\mathfrak{v}}_{\XLambdak[L^2]} +
  \inner{\mathfrak{u}}{\d\mathfrak{v}}_{\XLambdak[L^2]} =
  \inner{\mathfrak{f}}{\mathfrak{v}}_{\XLambdak[L^2]}
  \quad\forall\mathfrak{v}\in\mathring{V}.
\end{gather}

\begin{remark}
  \label{rem:dopf}
  For $n=3$ in physics the Dirac operator (or more specifically, the Dirac Hamiltonian of a free particle) is usually expressed by means of Pauli matrices as a
  differential operator acting on functions $\Omega\to\mathbb{C}^{4}$. Identifying $\mathbb{C}^{4}$ with
  $\mathbb{R}^{8}$ in a suitable way, this is exactly the differential operator of
  \Cref{def:D} as is explained in \cite[Section~2]{CHR18} and \cite[Section~1.1]{FRI23}.
\end{remark}

\subsection{Hodge-Laplacian}
\begin{definition}
    The Hodge-Laplacian on $k$-forms is 
    \[
        \mathsf{L}^k u_k := \left(\delta_{k + 1}\d[k] + \d[k - 1]\delta_k\right) u_k,
    \] where we understand $\delta_{k + 1}, \d[k - 1]$ to be zero if $0 \leq k \pm 1 \leq n$ is not the case. The domain is $$\mathcal{D}(\mathsf{L}^k) := \left\{u_k\in\XLambdak[\mathring{H}][k] \cap\XLambdak[{H}^*][k]: \d[k]u_k\in\XLambdak[{H}^*][k + 1] \land \delta_ku_k\in\XLambdak[\mathring{H}][k - 1]\right\}.$$
\end{definition}
The problem of interest (in strong form) is: given $f\in\XLambdak[L^2][k]$ (or $\XLambdak[L_*^2][n]$ in the case of $k = n$), find $u\in\mathcal{D}(\mathsf{L}^k)$ such that $\mathsf{L}^k u = f$.

We will use the mixed weak formulation from \cite[Chapter 4.4]{Arnold2018}, where well-posedness and stability are also established.

Explicitly: given $f\in\XLambdak[L^2][k]$ (or $\XLambdak[L_*^2][n]$ in the case of $k = n$), seek $\sigma\in\XLambdak[\mathring{H}][k - 1], u\in\XLambdak[\mathring{H}][k]$ (or $u\in\XLambdak[L_*^2][n]$ in the case of $k = n$) such that
\begin{equation}
\label{eq:laplace_weak}
\begin{aligned}
    \inner{u}{\d[k-1]\tau}_{\XLambdak[L^2][k]} - \inner{\sigma}{\tau}_{\XLambdak[L^2][k - 1]} & = 0 & \forall\tau\in\XLambdak[\mathring{H}][k - 1]\\
    \inner{\d[k-1]\sigma}{v}_{\XLambdak[L^2][k]} + \inner{\d[k]u}{\d[k]v}_{\XLambdak[L^2][k + 1]} & = \inner{f}{v}_{\XLambdak[L^2][k]} & \forall v\in\XLambdak[\mathring{H}][k].
\end{aligned}
\end{equation}
We have omitted the harmonic forms here, as we are working with a trivial topology.

\subsection{Magnetostatics}
This problem is motivated by Maxwell's equations (cf.\ \cite[Equation~5]{CW18}): for a given $f\in\XLambdak[L^2][1]$ with zero divergence (in the distributional sense), seek $u\in\XLambdak[\mathring{H}][1], \sigma\in\XLambdak[\mathring{H}][0]$:
\begin{equation}
    \label{eq:magnetostatics_weak}
    \begin{aligned}
        \inner{u}{\d[0]\tau}_{\XLambdak[L^2][1]} & = 0 \quad&&\forall\tau\in\XLambdak[\mathring{H}][0] \\
        \inner{\d[1]u}{\d[1]v}_{\XLambdak[L^2][2]} + \inner{\d[0]\sigma}{v}_{\XLambdak[L^2][1]} & = \inner{f}{v}_{\XLambdak[L^2][1]} \quad&&\forall v\in\XLambdak[\mathring{H}][1]
    \end{aligned}
\end{equation}

\begin{proposition}\label{proposition:magnetostatics_stability}
    \eqref{eq:magnetostatics_weak} is well-posed and stable.
\end{proposition}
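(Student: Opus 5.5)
We treat \eqref{eq:magnetostatics_weak} as a saddle-point problem in the Brezzi framework, with $V := \XLambdak[\mathring{H}][1]$ carrying the full norm $\norm{v}_V^2 = \norm{v}^2_{L^2} + \norm{\d[1]v}^2_{L^2}$, $Q := \XLambdak[\mathring{H}][0]$ with its $H\Lambda^0$ (i.e.\ $H^1_0$) norm, bilinear forms $a(u,v) := \inner{\d[1]u}{\d[1]v}_{\XLambdak[L^2][2]}$ and $b(v,\tau) := \inner{v}{\d[0]\tau}_{\XLambdak[L^2][1]}$. The right-hand side $v\mapsto\inner{f}{v}$ is clearly bounded on $V$, and so are $a$ and $b$ by Cauchy--Schwarz. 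Note the sign convention: the second row contains $\inner{\d[0]\sigma}{v} = b(v,\sigma)$, so the system is exactly the symmetric Brezzi form $a(u,v)+b(v,\sigma)=F(v)$, $b(u,\tau)=0$. Brezzi's theorem then gives existence, uniqueness and the a priori bound $\norm{u}_V+\norm{\sigma}_Q\lesssim\norm{f}_{L^2}$, provided the two conditions below hold.

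The first is the inf-sup condition for $b$. Given $\tau\in Q$, we choose $v := \d[0]\tau\in V$. This is admissible since $\d[0]\tau$ has vanishing tangential trace and $\d[1]\d[0]\tau=0$, so $\norm{v}_V = \norm{\d[0]\tau}_{L^2}$. Then $b(v,\tau)/\norm{v}_V = \norm{\d[0]\tau}_{L^2}$, and the Poincaré inequality on $\mathring{H}\Lambda^0$ (the Friedrichs inequality for $H^1_0$) gives $\norm{\d[0]\tau}_{L^2}\geq c\norm{\tau}_Q$. This step is immediate.

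The second is coercivity of $a$ on the kernel $Z := \{v\in V: b(v,\tau)=0\ \forall\tau\in Q\}$. These are the $v\in V$ that are $L^2$-orthogonal to $\d[0]\mathring{H}\Lambda^0$. Since $\Omega$ is topologically trivial, there are no harmonic forms with these boundary conditions, so the Hodge decomposition (\cite[Section 4.3]{Arnold2018}) gives $\ker\d[1]\cap V = \d[0]\mathring{H}\Lambda^0$, and hence $Z\subset (\ker \d[1])^{\perp}$. The Poincaré inequality for the closed Hilbert complex, \cite[Theorem 4.6]{Arnold2018}, which relies on the compactness property for Lipschitz polytopal domains, then yields $\norm{v}_{L^2}\leq c_P\norm{\d[1]v}_{L^2}$ for $v\in Z$. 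Consequently $a(v,v)\geq (1+c_P^2)^{-1}\norm{v}_V^2$ on $Z$.

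The main point is this kernel coercivity. It needs the exactness $\ker\d[1]=\operatorname{ran}\d[0]$ on $\mathring{H}\Lambda$ and the closed range of $\d[1]$. Both are standard for trivial topology, so we plan to cite them rather than reprove them. Two remarks complete the argument. First, the divergence-free assumption on $f$ is not needed for well-posedness. Instead, testing with $v=\d[0]\sigma$ gives $\norm{\d[0]\sigma}^2 = \inner{f}{\d[0]\sigma} = 0$, so $\sigma=0$ and the formulation is consistent with the strong magnetostatics problem. Second, the constants depend only on $\Omega$.
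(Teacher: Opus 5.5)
Your proposal is correct and follows essentially the same route as the paper, which also invokes the Babu\v{s}ka--Brezzi theorem with the Poincar\'e inequality and trivial topology. You spell out the details the paper leaves implicit: the inf-sup condition via the choice $v = \mathsf{d}^0\tau$ together with the Friedrichs inequality, and kernel coercivity via $\ker \mathsf{d}^1 = \operatorname{ran}\mathsf{d}^0$ plus the Hilbert-complex Poincar\'e inequality.
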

\begin{proof}
    This follows from the Babu\v{s}ka-Brezzi theorem, see \cite[Theorem 4.2.3]{BB13}. The coercivity and inf-sup condition follow trivially from the Poincar\'e inequality and the fact that we are working with a trivial topology.
\end{proof}

\section{Discrete Setting}\label{section:discrete}
Let $\mathcal{T}$ be a member of a \emph{uniformly shape-regular} sequence $\left( \mathcal{T}_{h}\right)$ of simplicial meshes of $\Omega$ indexed by their meshwidths $h$, which tend to zero.

We adopt the notation $a \lesssim b$ if $a \leq C b$ holds with a constant $C$ that does not depend on $h$, and $a\sim b$ if $a\lesssim b \;\land\; b\lesssim a$.

For the discretization, we rely on lowest order Whitney-forms $\whitneyForms[k]\subset\XLambdak[H][k]$, or more specifically the subspace with boundary conditions $\ringWhitneyForms[k]\subset\XLambdak[\mathring{H}][k]$, see \cite{Arnold2018} for more. We also introduce the exterior algebra of lowest order Whitney forms (with boundary conditions) as $\ringWhitneyForms := \bigoplus_{k = 0}^{n - 1} \ringWhitneyForms[k] \oplus \whitneyForms[n, *]$, where $\whitneyForms[n, *]$ designates the discrete $n$-forms with zero mean over $\Omega$. To simplify notation, we write $\ringWhitneyForms[n] := \whitneyForms[n, *]$.

\begin{definition}
    The operator $\d[k]$ restricted to $\ringWhitneyForms[k]$ is denoted by $\dh[k]$ and $\d$ restricted to $\ringWhitneyForms$ by $\dh$.
\end{definition}

\subsection{Mass-Lumping}
In \cite{Hiptmair2006}, an abstract approach to establishing spectral equivalence of problems arising from bilinear forms fulfilling an inf-sup condition is elucidated. If we do mass-lumping and the mass-lumped and original bilinear forms are both bounded $h$-uniformly in the Sobolev norms and if they fulfill an inf-sup inequality, we may employ similar means to prove spectral equivalence.

\begin{remark}
    We will see that the preconditioner we provide leads to an asymmetric linear system, so we will have to resort to a solver for generic linear systems like \texttt{GMRES}.
    It should be emphasized that spectral equivalence alone is \emph{insufficient} to establish that \texttt{GMRES} converges in an $h$-independent number of iterations for a prescribed tolerance. 
    However, it nevertheless offers at least some motivation for investigating the convergence numerically.
\end{remark}

Therefore, what we intend to show in this section is that under reasonable assumptions on the mass-lumping, we can obtain a Poincar\'e inequality, which we can use for establishing stability and spectral equivalence of the finite-element and mass-lumped problems.

\begin{remark}
    On tensor-product meshes using quadrilaterals, consistent mass-lumping can be performed (e.g.\ trapezoidal; essentially finite-differences) to arrive at a diagonal mass matrix, and the approach outlined here can even be implemented in a matrix-free fashion. This approach is known in literature under \emph{distributive relaxation}, cf.\ \cite{BD79}, \cite[Chapter 11.3]{Hackbusch1985}.
\end{remark}

\begin{definition}
    The Riesz isomorphism on $\ringWhitneyForms[k]$ equipped with the inner product $\inner{\cdot}{\cdot}_{\XLambdak[L^2]}$ is defined as $$\rieszFE[k]: \hilbertSpaceFE[k]'\to\hilbertSpaceFE[k],$$ and the one of $\hilbertSpaceFE$ by $\rieszFE$.

    The mass-lumped inner product on $\ringWhitneyForms[k]$ is denoted by $\innerc{\cdot}{\cdot}[k]$, the corresponding Riesz isomorphism is denoted by $$\rieszDEC[k]: \hilbertSpaceDEC[k]'\to\hilbertSpaceDEC[k],$$ and the one of $\hilbertSpaceDEC$ by $\rieszDEC$. The induced norms are denoted by $\normm{\cdot}[k]$ on $\ringWhitneyForms[k]$ and by $\normm{\cdot}$ on $\ringWhitneyForms$.

    Furthermore, define $\innerc{\cdot}{\cdot}[\XLambdak[H][k]] := \innerc{\cdot}{\cdot}[k] + \innerc{\d[k]\cdot}{\d[k]\cdot}[k]$, the induced norm $\normm{\cdot}[\XLambdak[H][k]]$, and $\innerc{\cdot}{\cdot}[\XLambdak[{H}]], \normm{\cdot}[\XLambdak[{H}]]$ analogously.
\end{definition}
\begin{definition}\label{definition:iso_decfe}
    Via the equality of sets, we define isomorphisms taking us from the original space to the mass-lumped one:
    \[
        \IsoFEDEC[k]: \hilbertSpaceFE[k]\to\hilbertSpaceDEC[k], \qquad \IsoSobolevFEDEC[k]: \hilbertSpaceFE[k][H]\to\hilbertSpaceDEC[k][\XLambdak[H][k]],
    \] and analogously for $\IsoFEDEC, \IsoSobolevFEDEC$.
\end{definition}
\begin{assumption}[$h$-uniform Norm Equivalence]\label{assumption:norm_eq}
  The norms $\normm{\cdot}[k]$ and $\norm{\cdot}_{\XLambdak[L^2][k]}$ on the finite-element spaces are $h$-uniformly equivalent. More
  precisely, there exist constants $c_-, c_+ > 0$ independent of $h$, such that
    \[
        c_- \normm{\IsoFEDEC[k] u_h}[k] \leq \norm{u_h}_{\XLambdak[L^2][k]} \leq c_+ \normm{\IsoFEDEC[k] u_h}[k]\quad\forall u_h\in \ringWhitneyForms[k],\ 0\leq k\leq n.
    \] 
    This is equivalent to the operator norms of $\IsoFEDEC[k]$ and $\IsoFEDEC$ (and their inverses) being $h$-uniformly bounded from above and below.

    Note that this also implies that the operator norms of $\IsoSobolevFEDEC[k]$ and $\IsoSobolevFEDEC$ (and their inverses) are $h$-uniformly bounded from above and below.
\end{assumption}

\begin{definition}
    The discrete adjoint of $\dh[k]$ in $\hilbertSpaceDEC$ is denoted by $\deltaML[k + 1]$. The adjoint of $\dh$ is defined analogously and denoted by $\deltaML$. Similarly, let $\deltaFE[k], \deltaFE$ denote the discrete adjoints of $\dh[k]$ and $\dh$, but in $\hilbertSpaceFE$.
\end{definition}

\begin{definition}
    Let $\perpFEEC$ denote orthogonality w.r.t.\ the original inner product, and $\perpML$ w.r.t.\ the mass-lumped one. Then define
    \begin{align*}
        \cycles[k]      &\mathrel{:=} \ker\dh[k],
        &\qquad \boundaries[k]  &\mathrel{:=} \ran\dh[k - 1], \\
        \harmonics[k]   &\mathrel{:=} \cycles[k] \cap \boundaries[k, \perpFEEC],
        &\qquad \harmonicsML[k] &\mathrel{:=} \cycles[k] \cap \boundaries[k, \perpML].
    \end{align*}
\end{definition}

\begin{lemma}[Discrete Hodge-Decomposition]\label{lemma:dec_hodge}
    We have the orthogonal decompositions
    \[
    \begin{alignedat}{3}
        \ringWhitneyForms[k] &= \boundaries[k]\; &\oplus_{\mathmakebox[\widthof{$\perpFEEC$}][c]{\perpFEEC}} &\mathmakebox[\widthof{$\cycles[k,\perpFEEC]$}][l]{\cycles[k, \perpFEEC]}\; &\oplus_{\mathmakebox[\widthof{$\perpFEEC$}][c]{\perpFEEC}} &\harmonics[k], \\
        \ringWhitneyForms[k] &= \boundaries[k]\; &\oplus_{\mathmakebox[\widthof{$\perpFEEC$}][c]{\perpML}}   &\mathmakebox[\widthof{$\cycles[k,\perpFEEC]$}][l]{\cycles[k, \perpML]}\;   &\oplus_{\mathmakebox[\widthof{$\perpFEEC$}][c]{\perpML}}   &\harmonicsML[k].
    \end{alignedat}
    \] Moreover, $\dim\harmonicsML[k] = \dim\harmonics[k]$.
\end{lemma}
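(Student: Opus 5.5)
This is pure finite-dimensional linear algebra, applied in the same way to both inner products. Fix either inner product on $\ringWhitneyForms[k]$, the consistent $L^2$ one or the mass-lumped $\innerc{\cdot}{\cdot}[k]$, and write $\perp$ for the corresponding orthogonality.

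The complex property $\dh[k]\circ\dh[k-1]=0$ gives $\boundaries[k]\subseteq\cycles[k]$. In a finite-dimensional inner-product space we always have $\ringWhitneyForms[k]=\cycles[k]\oplus_\perp\cycles[k,\perp]$. Inside the subspace $\cycles[k]$ we can further decompose
\[
\cycles[k]=\boundaries[k]\oplus_\perp\bigl(\cycles[k]\cap\boundaries[k,\perp]\bigr).
\]
The second summand is exactly $\harmonics[k]$ in the FEEC case and $\harmonicsML[k]$ in the mass-lumped case. Combining the two splittings gives the three-term orthogonal decomposition. Mutual orthogonality of all three pieces is immediate: $\boundaries[k]$ and the harmonic part both lie in $\cycles[k]$, which is orthogonal to $\cycles[k,\perp]$, and the harmonic part is orthogonal to $\boundaries[k]$ by definition.

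The essential observation is that $\cycles[k]$ and $\boundaries[k]$ are defined purely through $\dh$ and do not depend on the inner product. Only the complements and the harmonic spaces change when we pass from $\perpFEEC$ to $\perpML$.

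For the dimension statement, take dimensions in the splitting of $\cycles[k]$:
\[
\dim\harmonics[k]=\dim\cycles[k]-\dim\boundaries[k]=\dim\harmonicsML[k].
\]
Both are the dimension of the quotient $\cycles[k]/\boundaries[k]$, the discrete cohomology, which is independent of the inner product.

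One could additionally remark that, by the subcomplex property of Whitney forms and trivial topology, these spaces are $\{0\}$, though the statement does not require this. Alternatively, the orthogonal projection onto $\boundaries[k,\perpML]$, restricted to $\harmonics[k]$, is an explicit isomorphism onto $\harmonicsML[k]$. I will not need that map; the dimension count suffices.

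There is no real obstacle. The only point requiring care is to verify that $\boundaries[k]\subseteq\cycles[k]$ holds for the discrete spaces with boundary conditions, including the zero-mean constraint when $k=n$. Here $\boundaries[n]=\ran\dh[n-1]$ consists of zero-mean $n$-forms by Stokes' theorem and the vanishing traces, so it indeed lies in $\ringWhitneyForms[n]$.
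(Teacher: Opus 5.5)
Your proof is correct and is essentially the paper's argument: the paper only says the lemma follows from $\dh[k]\dh[k-1]=0$ and the adjointness of $\dh$ and the discrete codifferentials in the respective inner products, and your splittings $\ringWhitneyForms[k]=\cycles[k]\oplus_\perp\cycles[k,\perp]$ and $\cycles[k]=\boundaries[k]\oplus_\perp\bigl(\cycles[k]\cap\boundaries[k,\perp]\bigr)$ are what that amounts to; you never need the codifferentials, since the statement involves only orthogonal complements. Two details the paper leaves implicit are supplied by you: the count $\dim\cycles[k]-\dim\boundaries[k]$, which gives the dimension equality, and the check that $\ran\dh[n-1]$ lies in the zero-mean space.
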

\begin{proof}
    This is a trivial consequence of the fact that $\dh[k]\dh[k - 1] \equiv 0 \equiv \deltaML[k - 1]\deltaML[k] \equiv \delta^h_{k - 1}\delta^h_k$, that $\deltaML$ (or $\delta^h_k$) and $\dh$ are adjoint to one another in the respective inner products. See also \cite[Section 2.1]{Leopardi2016} and \cite[Chapter 5.2.2]{Arnold2018}.
\end{proof}

\begin{proposition}[Poincar\'e Inequality in Mass-Lumped Norms]\label{proposition:dec_poincare}
    For $u_h\in\cycles[k, \perpML]$ it holds that
    \[
        \normm{u_h}[k] \lesssim \normm{\dh[k] u_h}[k + 1].
    \]
\end{proposition}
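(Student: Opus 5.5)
The plan is to transfer the classical $h$-uniform discrete Poincaré inequality for Whitney forms from the consistent $L^2$ inner product to the mass-lumped one. The two ingredients are a minimization property of $\perpML$-orthogonal complements and \Cref{assumption:norm_eq}. The ingredient I take as given is the FEEC discrete Poincaré inequality: for $w_h\in\cycles[k,\perpFEEC]$,
\[
  \norm{w_h}_{\XLambdak[L^2][k]} \leq C_P \norm{\dh[k] w_h}_{\XLambdak[L^2][k+1]},
\]
with $C_P$ independent of $h$. This is \cite[Theorem 5.11]{Arnold2018}. Its proof uses the $L^2$-bounded cochain projections onto $\ringWhitneyForms$ that commute with $\d$, which exist uniformly on shape-regular meshes also with essential boundary conditions, together with the continuous Poincaré inequality. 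I expect this to be the only genuinely analytic step; the rest is linear algebra. The case $k=n$ is trivial: there $\dh[n]=0$, so $\cycles[n]$ is the whole space and $\cycles[n,\perpML]=\{0\}$.

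For $0\le k<n$, fix $u_h\in\cycles[k,\perpML]$. Using the first decomposition of \Cref{lemma:dec_hodge}, split $u_h = z_h + w_h$ with $z_h\in\cycles[k]$ and $w_h\in\cycles[k,\perpFEEC]$; here $z_h$ collects the $\boundaries[k]$ and $\harmonics[k]$ parts, both of which lie in $\ker\dh[k]$. Then $\dh[k] w_h = \dh[k] u_h$.

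Next, since $u_h\perpML \cycles[k]$, the element $u_h$ is the $\normm{\cdot}[k]$-minimal element of the affine set $u_h+\cycles[k]$. Indeed, by Pythagoras in the mass-lumped inner product,
\[
  \normm{u_h - y_h}[k]^2 = \normm{u_h}[k]^2 + \normm{y_h}[k]^2 \qquad \text{for all } y_h\in\cycles[k].
\]
Taking $y_h = z_h$ gives $\normm{u_h}[k]\le \normm{w_h}[k]$.

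Finally, chain the estimates, using \Cref{assumption:norm_eq} at both ends (with the identification $\IsoFEDEC[k]$ suppressed):
\[
  \normm{u_h}[k] \le \normm{w_h}[k] \le c_-^{-1}\norm{w_h}_{\XLambdak[L^2][k]} \le c_-^{-1}C_P\norm{\dh[k]u_h}_{\XLambdak[L^2][k+1]} \le c_-^{-1}C_P\,c_+\,\normm{\dh[k]u_h}[k+1].
\]
All constants are $h$-independent, which gives the claim. Trivial topology is not needed here; it only ensures that the harmonic parts vanish and is consistent with the remark on non-trivial topologies. The main point to verify carefully is the citation of the uniform FEEC Poincaré inequality for the spaces $\ringWhitneyForms[k]$ with boundary conditions. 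If needed, I would reprove it directly: given $w_h\in\cycles[k,\perpFEEC]$, take the continuous $\mathring{H}$-Poincaré preimage $v$ of $\dh[k] w_h$ with $\norm{v}\lesssim\norm{\dh[k]w_h}$, apply the bounded commuting projection to $v$, and use that $w_h$ is orthogonal to cycles.
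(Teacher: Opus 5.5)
Your proof is correct and follows the paper's route. Your $w_h$ is the paper's $\beta_h$, the FEEC-orthogonal $\cycles[k,\perpFEEC]$-component of $u_h$, and the chain of estimates is the same: norm equivalence, the FEEC Poincar\'e inequality, $\dh[k]w_h=\dh[k]u_h$, then norm equivalence again. The only difference is how $\normm{u_h}[k]\le\normm{w_h}[k]$ is obtained: the paper uses a second, mass-lumped Hodge decomposition of each FEEC component to identify $u_h$ as the mass-lumped projection of $\beta_h$ onto $\cycles[k,\perpML]$, while your Pythagoras/minimal-norm argument gives it in one line.
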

\begin{proof}
    Let us decompose $u_h$ into a \emph{FEEC-orthogonal} decomposition
    \begin{equation}\label{eq:feec_hodge}
        u_h = \dh[k - 1] \alpha_h + \beta_h + \gamma_h,
    \end{equation}
    where $\alpha_h \in\cycles[k - 1, \perpFEEC], \beta_h \in\cycles[k, \perpFEEC]$ and $\gamma_h\in\harmonics[k]$.

    We now decompose $\alpha_h, \beta_h$ and $\gamma_h$ using a \emph{mass-lumped-orthogonal} decomposition, i.e.\ 
    \begin{alignat}{2}
      \alpha_h &= \dh[k-2] \bar{\alpha}_h + \tilde{\alpha}_h + \alpha_{h,0},
      &\qquad \bar{\alpha}_h \in \cycles[k-2, \perpML],\; \alpha_{h,0} \in \harmonicsML[k-1],\nonumber\\
      \beta_h  &= \dh[k-1] \bar{\beta}_h + \tilde{\beta}_h + \beta_{h,0},
      &\qquad \bar{\beta}_h \in \cycles[k-1, \perpML],\; \tilde{\beta}_h \in \cycles[k, \perpML],\; \beta_{h,0} \in \harmonicsML[k],\label{eq:beta_h_decomp}\\
      \gamma_h &= \dh[k-1] \bar{\gamma}_h + \tilde{\gamma}_h + \gamma_{h,0},
      &\qquad \bar{\gamma}_h \in \cycles[k-1, \perpML],\; \tilde{\gamma}_h \in \cycles[k, \perpML],\; \gamma_{h,0} \in \harmonicsML[k].\nonumber
    \end{alignat}
    We note that due to $\dh[k]\gamma_h = 0$ (it is harmonic) and $\tilde{\gamma}_h \in \cycles[k, \perpML]$, we may imply that $\tilde{\gamma}_h = 0$. We thus get the following form of the mass-lumped-orthogonal Hodge-decomposition of $u_h$:
    \begin{equation}\label{eq:ml_orth_hodge_decomp_uh}
        u_h = \dh[k - 1]\left[ \tilde{\alpha}_h + \bar{\beta}_h + \bar{\gamma}_h \right] + \left[ \tilde{\beta}_h \right] + \left[ \beta_{h, 0} + \gamma_{h, 0} \right].
    \end{equation}
    Recall that by the assumption of the lemma, we have $u_h\in\cycles[k, \perpML]$, so by uniqueness of the decomposition, we follow $u_h = \tilde{\beta}_h$.
    The rest essentially follows from applying the FEEC Poincar\'e inequality (see \cite[Theorem 5.2]{Arnold2018} and also \cite{Leopardi2016}) to $\beta_h$, or more precisely
    \begin{align*}
        \normm{u_h}[k] & = \normm{\tilde{\beta}_h}[k] \lesssim \normm{\beta_h}[k] && [\text{Orthogonality of \eqref{eq:beta_h_decomp}}] \\
        & \lesssim \formNorm{\beta_h}[L^2][k] \leq \formNorm{\beta_h}[H][k] && [\text{\Cref{assumption:norm_eq}}] \\
        & \lesssim \formNorm{\dh[k]\beta_h}[L^2][k + 1] && [\text{FEEC Poincar\'e \cite[Theorem 5.2]{Arnold2018}}] \\
        & = \formNorm{\dh[k]u_h}[L^2][k + 1] && [\text{\Cref{eq:feec_hodge}}] \\
        & \lesssim \normm{\dh[k]u_h}[k + 1] && [\text{\Cref{assumption:norm_eq}}],
    \end{align*} which concludes the proof.
\end{proof}

\subsection{Hodge-Dirac Operator}
\subsubsection{Discrete Problem}
In the discrete case, for the Dirac operator we seek $\mathfrak{u}_h\in\ringWhitneyForms$ such that
\begin{equation}\label{eq:dirac_feec_discrete_problem}
  \diracBFEEC{\mathfrak{u}_h}{\mathfrak{v}_h} := \inner{\d\mathfrak{u}_h}{\mathfrak{v}_h}_{\XLambdak[L^2]} +
  \inner{\mathfrak{u}_h}{\d\mathfrak{v}_h}_{\XLambdak[L^2]} =
  \inner{\mathfrak{f}}{\mathfrak{v}_h}_{\XLambdak[L^2]}
  \quad\forall\mathfrak{v}_h\in\ringWhitneyForms.
\end{equation} For 
\[
    \diracBFEEC{\cdot}{\cdot}: \hilbertSpaceFE\times\hilbertSpaceFE\to\R,
\] let $\diracGalerkinOpFEEC: \hilbertSpaceFE\to\hilbertSpaceFE'$ denote the induced operator and define 
\[ 
    \diracOpFEEC := \rieszFE\diracGalerkinOpFEEC: \hilbertSpaceFE\to\hilbertSpaceFE.
\]
Equivalently, $\diracOpFEEC$ can be written as
\[
    \diracOpFEEC = \d + \deltaFE.
\]
When replacing the $L^2$ by the Sobolev norms, we can also regard the bilinear form as
\[
    \diracBFEEC{\cdot}{\cdot}: \hilbertSpaceFE[][H]\times\hilbertSpaceFE[][H]\to\R,
\] in which case we denote the induced operator by $\diracSobolevOpFEEC: \hilbertSpaceFE[][H]\to\hilbertSpaceFE[][H]'$.
\begin{remark}
    By choosing a basis and instantiating $\diracGalerkinOpFEEC$ as an explicit matrix, we retrieve the usual Galerkin matrix using the chosen basis of the finite element space.
\end{remark}

Applying mass-lumping (replacing the inner products), we get the corresponding bilinear form
\begin{equation}\label{eq:dirac_dec_bilinear_form}
\diracBDEC{\mathfrak{u}_h}{\mathfrak{v}_h} := \innerc{\d\mathfrak{u}_h}{\mathfrak{v}_h} +
  \innerc{\mathfrak{u}_h}{\d\mathfrak{v}_h},
\end{equation}
 and the induced linear map $\diracGalerkinOpDEC: \hilbertSpaceDEC\to\hilbertSpaceDEC'$. Analogously, we define the endomorphism $\diracOpDEC := \rieszDEC\diracGalerkinOpDEC$. Equivalently,
\[
    \diracOpDEC = \dh + \deltaML.
\]
We denote the induced map of the bilinear form on the mass-lumped discrete Sobolev space by $\diracSobolevOpDEC: \hilbertSpaceDEC[][\XLambdak[H]]\to\hilbertSpaceDEC[][\XLambdak[H]]'$.

Using the Poincar\'e inequality, we can prove stability for the mass-lumped problem.
\begin{proposition}[{{Inf-Sup Inequality, \cite[Theorem~6]{Leopardi2016}}}]\label{lemma:infsup_dec}
    For all nonzero $\mathfrak{u}_h\in\ringWhitneyForms$, there exists a nonzero $\mathfrak{v}_h\in\ringWhitneyForms$ satisfying
    \[
        \normm{\mathfrak{u}_h}[\XLambdak[H]]\normm{\mathfrak{v}_h}[\XLambdak[H]] \lesssim \diracBDEC{\mathfrak{u}_h}{\mathfrak{v}_h}.
    \]
\end{proposition}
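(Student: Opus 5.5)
We follow the construction of \cite[Theorem~6]{Leopardi2016}, carried out entirely in the mass-lumped inner product and using \Cref{proposition:dec_poincare} in place of the FEEC Poincar\'e inequality. Fix a nonzero $\mathfrak{u}_h\in\ringWhitneyForms$. By \Cref{lemma:dec_hodge}, applied componentwise in each degree $k$, we write $\mathfrak{u}_h = \dh\mathfrak{a}_h + \mathfrak{b}_h$. Here $\mathfrak{a}_h$ has components in $\cycles[k-1,\perpML]$ and $\mathfrak{b}_h$ has components in $\cycles[k,\perpML]$. There is no harmonic part: by trivial topology and the zero-mean constraint on $n$-forms, $\dim\harmonicsML[k]=\dim\harmonics[k]=0$. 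The decomposition is $\innerc{\cdot}{\cdot}$-orthogonal, so $\normm{\mathfrak{u}_h}^2 = \normm{\dh\mathfrak{a}_h}^2+\normm{\mathfrak{b}_h}^2$. Moreover $\dh\mathfrak{u}_h=\dh\mathfrak{b}_h$, and hence $\normm{\mathfrak{u}_h}[\XLambdak[H]]^2 = \normm{\dh\mathfrak{a}_h}^2+\normm{\mathfrak{b}_h}^2+\normm{\dh\mathfrak{b}_h}^2$.

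The test function is $\mathfrak{v}_h := \mathfrak{u}_h + \dh\mathfrak{b}_h + \mathfrak{a}_h$. Nonzeroness will follow from the final bound. We compute $\diracBDEC{\mathfrak{u}_h}{\mathfrak{v}_h} = \innerc{\dh\mathfrak{u}_h}{\mathfrak{v}_h}+\innerc{\mathfrak{u}_h}{\dh\mathfrak{v}_h}$ term by term, using $\dh\dh=0$ and the orthogonality of the decomposition.
\begin{itemize}
\item From $\mathfrak{v}=\mathfrak{u}_h$ we get $2\innerc{\dh\mathfrak{b}_h}{\dh\mathfrak{a}_h+\mathfrak{b}_h}$. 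This vanishes, since $\dh\mathfrak{b}_h$ is a boundary and $\dh\mathfrak{a}_h+\mathfrak{b}_h$ is $\perpML$ to boundaries modulo the boundary part, which pairs with a boundary.
\item From $\mathfrak{v}=\dh\mathfrak{b}_h$ we get $\normm{\dh\mathfrak{b}_h}^2$, the second term being zero because $\dh\dh=0$.
\item From $\mathfrak{v}=\mathfrak{a}_h$ we get $\innerc{\dh\mathfrak{b}_h}{\mathfrak{a}_h}+\normm{\dh\mathfrak{a}_h}^2$, and the cross term vanishes because $\mathfrak{a}_h\perp_{\mathrm{ML}}$ boundaries.
\end{itemize}
The cross terms must be checked carefully degree by degree, since $\dh$ shifts degrees. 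Should an unwanted term such as $\innerc{\dh\mathfrak{a}_h}{\dh\mathfrak{b}_h}$ appear, we would instead weight the summands, taking $\mathfrak{v}_h=\dh\mathfrak{b}_h+\mathfrak{a}_h+\epsilon\mathfrak{u}_h$, and absorb it with Young's inequality. The outcome we aim for is $\diracBDEC{\mathfrak{u}_h}{\mathfrak{v}_h}\gtrsim \normm{\dh\mathfrak{a}_h}^2+\normm{\dh\mathfrak{b}_h}^2$.

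This lower bound already controls $\normm{\mathfrak{u}_h}[\XLambdak[H]]^2$. Indeed, \Cref{proposition:dec_poincare} applied to $\mathfrak{b}_h\in\cycles[k,\perpML]$ gives $\normm{\mathfrak{b}_h}\lesssim\normm{\dh\mathfrak{b}_h}$, so all three terms of $\normm{\mathfrak{u}_h}[\XLambdak[H]]^2$ are bounded by $\normm{\dh\mathfrak{a}_h}^2+\normm{\dh\mathfrak{b}_h}^2$. It remains to bound $\normm{\mathfrak{v}_h}[\XLambdak[H]]\lesssim\normm{\mathfrak{u}_h}[\XLambdak[H]]$. First, $\normm{\dh\mathfrak{b}_h}[\XLambdak[H]] = \normm{\dh\mathfrak{b}_h}$. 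Second, $\normm{\mathfrak{a}_h}[\XLambdak[H]]\lesssim\normm{\dh\mathfrak{a}_h}$, again by \Cref{proposition:dec_poincare}, now applied to $\mathfrak{a}_h\in\cycles[k-1,\perpML]$. Combining these gives $\normm{\mathfrak{u}_h}[\XLambdak[H]]\normm{\mathfrak{v}_h}[\XLambdak[H]]\lesssim\normm{\mathfrak{u}_h}[\XLambdak[H]]^2\lesssim\diracBDEC{\mathfrak{u}_h}{\mathfrak{v}_h}$. In particular $\mathfrak{v}_h\neq0$.

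The main obstacle is the bookkeeping of the cross terms in the expansion of $\diracBDEC{\mathfrak{u}_h}{\mathfrak{v}_h}$. The signs and the degree shifts in the graded operator $\dh$ determine which pairings vanish by orthogonality, and the choice of test function, and possibly the weights in it, must be tuned so that only nonnegative squares survive. The $h$-uniformity of the final constant comes solely from \Cref{proposition:dec_poincare}.
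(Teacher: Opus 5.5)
Your decomposition $\mathfrak{u}_h=\dh\mathfrak{a}_h+\mathfrak{b}_h$ and your use of the mass-lumped Poincar\'e inequality match the paper's proof; your $\mathfrak{a}_h,\mathfrak{b}_h$ are its $\mathfrak{r}_h,\mathfrak{w}_h$. But the extra summand $\mathfrak{u}_h$ in your test function breaks the argument, and your first bullet is false. We have $\diracBDEC{\mathfrak{u}_h}{\mathfrak{u}_h}=2\innerc{\dh\mathfrak{b}_h}{\dh\mathfrak{a}_h}$, whose degree-$k$ contribution is $2\innerc{\dh[k-1]b_{k-1}}{\dh[k-1]a_{k-1}}$. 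Here $b_{k-1}$ comes from decomposing $u_{k-1}$ and $a_{k-1}$ from decomposing $u_k$. They are unrelated elements of the same space $\cycles[k-1,\perpML]$, and two boundaries are in general not orthogonal. The ``boundary part pairing with a boundary'' is exactly the term that survives. Adding up your three bullets correctly gives
\[
\diracBDEC{\mathfrak{u}_h}{\mathfrak{v}_h}
=\normm{\dh\mathfrak{a}_h}^2+2\innerc{\dh\mathfrak{a}_h}{\dh\mathfrak{b}_h}+\normm{\dh\mathfrak{b}_h}^2
=\normm{\dh(\mathfrak{a}_h+\mathfrak{b}_h)}^2 ,
\]
and this can vanish. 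In fact $\mathfrak{v}_h=(\mathfrak{a}_h+\mathfrak{b}_h)+\dh(\mathfrak{a}_h+\mathfrak{b}_h)$. Take any nonzero $\mathfrak{b}_h$ with components in $\cycles[k,\perpML]$ and set $\mathfrak{u}_h:=\mathfrak{b}_h-\dh\mathfrak{b}_h$, which is nonzero and has $\mathfrak{a}_h=-\mathfrak{b}_h$. Then $\mathfrak{v}_h=0$. So for the test function you actually chose, both the lower bound $\diracBDEC{\mathfrak{u}_h}{\mathfrak{v}_h}\gtrsim\normm{\dh\mathfrak{a}_h}^2+\normm{\dh\mathfrak{b}_h}^2$ and the inference $\mathfrak{v}_h\neq0$ fail.

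The repair is your own fallback $\mathfrak{v}_h=\dh\mathfrak{b}_h+\mathfrak{a}_h+\epsilon\mathfrak{u}_h$. It gives $\normm{\dh\mathfrak{a}_h}^2+\normm{\dh\mathfrak{b}_h}^2+2\epsilon\innerc{\dh\mathfrak{a}_h}{\dh\mathfrak{b}_h}\ge(1-\epsilon)\bigl(\normm{\dh\mathfrak{a}_h}^2+\normm{\dh\mathfrak{b}_h}^2\bigr)$, but only for $\epsilon<1$. Your main choice is precisely the degenerate endpoint $\epsilon=1$. The natural choice is $\epsilon=0$, i.e.\ $\mathfrak{v}_h:=\mathfrak{a}_h+\dh\mathfrak{u}_h$, which is exactly the paper's test function $\mathfrak{r}_h+\dh\mathfrak{u}_h$. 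With it, your second and third bullets (which are correct) give the identity $\diracBDEC{\mathfrak{u}_h}{\mathfrak{v}_h}=\normm{\dh\mathfrak{u}_h}^2+\normm{\dh\mathfrak{a}_h}^2$ with no cross terms at all. The rest of your argument then goes through verbatim: Poincar\'e for $\mathfrak{a}_h$ and $\mathfrak{b}_h$, and the bound $\normm{\mathfrak{v}_h}[\XLambdak[H]]\lesssim\normm{\mathfrak{u}_h}[\XLambdak[H]]$.
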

\begin{proof}
    The proof is almost the same as the one from \cite[Theorem~6]{Leopardi2016}, but with the norms replaced by the mass-lumped variants. We will go through it for completeness' sake.
    
    Take the Hodge decomposition of $$\mathfrak{u}_h = \d \mathfrak{r}_h + \mathfrak{w}_h$$ with $\mathfrak{r}_h\in\left(\ker\dh\right)^\perp$ and define $\mathfrak{v}_h := \mathfrak{r}_h + \d\mathfrak{u}_h$.
    
    The Poincar\'e inequality from \Cref{proposition:dec_poincare} together with the orthogonality of the Hodge decomposition from \Cref{lemma:dec_hodge} yields
    \begin{equation}\label{eq:infsup_v}
    \begin{aligned}
        \normm{\mathfrak{v}_h}[\XLambdak[H]] & \leq \normm{\mathfrak{r}_h}[\XLambdak[H]] + \normm{\d\mathfrak{u}_h}[\XLambdak[H]] \\
        & \leq c_p\normm{\d \mathfrak{r}_h} + \normm{\d\mathfrak{u}_h} \\
        & \lesssim \normm{\mathfrak{u}_h}[\XLambdak[H]],
    \end{aligned}
    \end{equation} where $c_p \geq 1$ is given through the Poincar\'e inequality and we used that $\normm{\d\mathfrak{u}_h}[\XLambdak[H]] = \normm{\d\mathfrak{u}_h}$. Note that $c_p \geq 1$ has to hold, as we are estimating $\normm{\mathfrak{v}_h}[\XLambdak[H]]$ by its exterior derivative.
    
    Substituting this into the bilinear form and once again using the Hodge decomposition and Poincar\'e inequality gives
    \begin{align*}
        \diracBDEC{\mathfrak{u}_h}{\mathfrak{v}_h} & = \normm{\d\mathfrak{u}_h}^2 + \innerc{\mathfrak{u}_h}{\d\mathfrak{r}_h} \\
        & = \normm{\d\mathfrak{u}_h}^2 + \innerc{\d\mathfrak{r}_h}{\d\mathfrak{r}_h} \\
        & = \frac{1}{2}\normm{\d\mathfrak{u}_h}^2 + \frac{1}{2}\normm{\d\mathfrak{w}_h}^2 + \normm{\d\mathfrak{r}_h}^2 \\
        & \geq \frac{1}{2}\normm{\d\mathfrak{u}_h}^2 + \frac{1}{2 c_p^2}\normm{\mathfrak{w}_h}^2 + \normm{\d\mathfrak{r}_h}^2 \\
        & \geq \frac{1}{2 c_p^2} \normm{\mathfrak{u}_h}[\XLambdak[H]]^2,
    \end{align*} where the last inequality follows from $c_p \geq 1$. Combining this with \eqref{eq:infsup_v} yields the statement.
\end{proof}

\subsubsection{Spectral Equivalence}
\begin{lemma}[{{FEEC Inf-Sup Inequality, \cite[Theorem~10]{Leopardi2016}}}]\label{lemma:feec_stab}
    For all nonzero $u_h\in\ringWhitneyForms$, there exists a nonzero $v_h\in\ringWhitneyForms$ satisfying
    \[
        \norm{u_h}_{\XLambdak[H]}\norm{v_h}_{\XLambdak[H]} \lesssim \diracBFEEC{u_h}{v_h}.
    \]
\end{lemma}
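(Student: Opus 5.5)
The plan is to repeat the argument of \Cref{lemma:infsup_dec} verbatim with the consistent $L^2$ inner product in place of the mass-lumped one. This is possible because nothing in that argument used more than two ingredients: the orthogonal Hodge decomposition and a Poincar\'e inequality. For the FEEC inner product both are available directly. The decomposition is the first line of \Cref{lemma:dec_hodge}, with respect to $\perpFEEC$. The FEEC Poincar\'e inequality $\formNorm{w_h}[L^2][k]\lesssim\formNorm{\dh[k]w_h}[L^2][k+1]$ for $w_h\in\cycles[k,\perpFEEC]$ is \cite[Theorem 5.2]{Arnold2018}, and it was already used inside the proof of \Cref{proposition:dec_poincare}. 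Its constant is $h$-uniform by the bounded cochain projections.

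Concretely, for nonzero $u_h\in\ringWhitneyForms$ I would proceed in three steps.
\begin{enumerate}
\item Write the FEEC-orthogonal decomposition $u_h=\dh r_h+w_h$ with $r_h\in(\ker\dh)^{\perpFEEC}$ and $w_h\in(\ran\dh)^{\perpFEEC}$. Since the topology is trivial, $\harmonics[k]=\{0\}$ (in the $n$-form slot the zero-mean condition removes the constants). Hence $w_h\in\cycles[\cdot,\perpFEEC]$ componentwise, and the Poincar\'e inequality applies both to $w_h$ and to $r_h$.
\item Set $v_h:=r_h+\dh u_h$. Using Poincar\'e for $r_h$, orthogonality $\norm{\dh r_h}\le\norm{u_h}$, and $\dh\dh=0$, I get
\[
\norm{v_h}_{\XLambdak[H]}\le c_p\norm{\dh r_h}_{L^2}+\norm{\dh u_h}_{L^2}\lesssim\norm{u_h}_{\XLambdak[H]}.
\]
\item Expand $\diracBFEEC{u_h}{v_h}=\norm{\dh u_h}^2+\inner{u_h}{\dh r_h}=\norm{\dh u_h}^2+\norm{\dh r_h}^2$. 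Since $\dh u_h=\dh w_h$, splitting $\norm{\dh u_h}^2$ in halves and applying Poincar\'e to $w_h$ gives
\[
\diracBFEEC{u_h}{v_h}\ge\tfrac{1}{2c_p^2}\bigl(\norm{w_h}^2+\norm{\dh r_h}^2+\norm{\dh u_h}^2\bigr)\gtrsim\norm{u_h}_{\XLambdak[H]}^2 .
\]
\end{enumerate}
Multiplying the bound of step 2 against step 3 yields the claimed inequality. Note that $v_h\neq0$, since otherwise the bilinear form would vanish while the right-hand side of step 3 is positive.

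The only delicate point is the $h$-uniformity of $c_p$, which is exactly what \cite[Theorem 5.2]{Arnold2018} provides on a shape-regular family. The second small point is checking that the $n$-form component sits correctly in $\whitneyForms[n,*]$, so that no harmonic part survives. Alternatively, one can simply cite \cite[Theorem~10]{Leopardi2016}, of which this is a restatement.
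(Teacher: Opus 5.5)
Your proof is correct and takes essentially the paper's route. The paper gives no argument of its own here and simply cites \cite[Theorem~10]{Leopardi2016}. That result rests on the same construction you use: take $v_h = r_h + \mathsf{d}u_h$ from the FEEC-orthogonal Hodge decomposition and apply the $h$-uniform discrete Poincar\'e inequality. The paper writes out this same construction, with mass-lumped norms, in its proof of \Cref{lemma:infsup_dec}.
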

\begin{lemma}[$h$-uniform Boundedness of Bilinear Forms and Inverses]\label{lemma:h_uniform_opnorms}
    It holds that
    \[\begin{array}{ccc}
        \norm{\diracSobolevOpFEEC}_{\hilbertSpaceFE[][H]\to\hilbertSpaceFE[][H]'}
        &,&
        \norm{\diracSobolevOpFEEC^{-1}}_{\hilbertSpaceFE[][H]'\to\hilbertSpaceFE[][H]},\\
        \norm{\diracSobolevOpDEC}_{\hilbertSpaceDEC[][{\XLambdak[H]}]\to\hilbertSpaceDEC[][{\XLambdak[H]}]'}
        &,&
        \norm{\diracSobolevOpDEC^{-1}}_{\hilbertSpaceDEC[][{\XLambdak[H]}]'\to\hilbertSpaceDEC[][{\XLambdak[H]}]}
    \end{array}\]
    are all bounded from above by constants independent of the mesh-width.
\end{lemma}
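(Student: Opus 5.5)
The plan is to treat the four quantities in two pairs: the forward operator norms follow from continuity of the bilinear forms, and the inverse norms follow from the inf-sup inequalities already established. The only thing that needs care is that every constant is independent of $h$.

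For \textbf{boundedness of $\diracSobolevOpFEEC$}, apply Cauchy--Schwarz in $\XLambdak[L^2]$ to each term of $\diracBFEEC{\cdot}{\cdot}$:
\[
  |\diracBFEEC{\mathfrak{u}_h}{\mathfrak{v}_h}| \leq \norm{\d\mathfrak{u}_h}_{\XLambdak[L^2]}\norm{\mathfrak{v}_h}_{\XLambdak[L^2]} + \norm{\mathfrak{u}_h}_{\XLambdak[L^2]}\norm{\d\mathfrak{v}_h}_{\XLambdak[L^2]} \leq 2\norm{\mathfrak{u}_h}_{\XLambdak[H]}\norm{\mathfrak{v}_h}_{\XLambdak[H]}.
\]
This gives the constant $2$ outright. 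For \textbf{boundedness of $\diracSobolevOpDEC$}, the same Cauchy--Schwarz argument applied to $\innerc{\cdot}{\cdot}$ gives
\[
  |\diracBDEC{\mathfrak{u}_h}{\mathfrak{v}_h}| \leq 2\normm{\mathfrak{u}_h}[\XLambdak[H]]\normm{\mathfrak{v}_h}[\XLambdak[H]].
\]
Neither bound even uses \Cref{assumption:norm_eq}.

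For the inverses, I use the standard Babu\v{s}ka--Ne\v{c}as argument. \Cref{lemma:feec_stab} states that for each nonzero $\mathfrak{u}_h$ there is a nonzero $\mathfrak{v}_h$ with $\norm{\mathfrak{u}_h}_{\XLambdak[H]}\norm{\mathfrak{v}_h}_{\XLambdak[H]} \leq C\,\diracBFEEC{\mathfrak{u}_h}{\mathfrak{v}_h}$, with $C$ independent of $h$. Dividing by $\norm{\mathfrak{v}_h}_{\XLambdak[H]}$ gives
\[
  \norm{\diracSobolevOpFEEC\mathfrak{u}_h}_{\hilbertSpaceFE[][H]'} = \sup_{\mathfrak{v}_h \neq 0}\frac{\diracBFEEC{\mathfrak{u}_h}{\mathfrak{v}_h}}{\norm{\mathfrak{v}_h}_{\XLambdak[H]}} \geq C^{-1}\norm{\mathfrak{u}_h}_{\XLambdak[H]}.
\]
Hence $\diracSobolevOpFEEC$ is injective. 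The space is finite dimensional, so $\diracSobolevOpFEEC$ is bijective, and the lower bound reads $\norm{\diracSobolevOpFEEC^{-1}}\leq C$.

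The mass-lumped case is identical, with \Cref{lemma:infsup_dec} in place of \Cref{lemma:feec_stab}. Its constant is $2c_p^2$ times the constant from \eqref{eq:infsup_v}, where $c_p$ is the Poincar\'e constant of \Cref{proposition:dec_poincare}.

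The main obstacle, such as it is, is confirming that the constant in \Cref{lemma:infsup_dec} is really $h$-independent. This reduces to $c_p$ being $h$-uniform. In the proof of \Cref{proposition:dec_poincare}, $c_p$ is built from three ingredients: the norm-equivalence constants $c_\pm$ of \Cref{assumption:norm_eq}, the FEEC discrete Poincar\'e constant (uniform for shape-regular meshes by \cite[Theorem 5.2]{Arnold2018}), and the orthogonal-projection step $\normm{\tilde{\beta}_h}[k]\leq\normm{\beta_h}[k]$, which has constant $1$. I will note explicitly that each of these is independent of $h$.

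A smaller point also needs a remark. The inf-sup inequalities are stated on $\ringWhitneyForms$, where the $n$-form component has zero mean. This excludes the constant $n$-forms that form $\ker\DD$, so injectivity holds on the full discrete space.
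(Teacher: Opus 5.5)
Your proposal is correct and follows the paper's proof: continuity with constant $2$ via Cauchy--Schwarz in the respective inner products gives the two forward bounds, and the inf-sup inequalities of \Cref{lemma:feec_stab} and \Cref{lemma:infsup_dec} give the two inverse bounds. Your extra bookkeeping (injectivity plus finite dimension giving bijectivity, and tracing the $h$-independence of $c_p$ back to \Cref{assumption:norm_eq} and the FEEC Poincar\'e constant) makes explicit what the paper leaves implicit.
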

\begin{proof}
    For the FEEC problem, boundedness of the bilinear form yields
    \[
        \diracBFEEC{u_h}{v_h} \leq 2\norm{u_h}_{\XLambdak[H]}\norm{v_h}_{\XLambdak[H]}\quad\forall u_h, v_h \in \ringWhitneyForms,
    \]
    hence
    \[
        \norm{\diracSobolevOpFEEC}_{\hilbertSpaceFE[][H]\to\hilbertSpaceFE[][H]'} \leq 2.
    \]
    Moreover, \Cref{lemma:feec_stab} implies the inf-sup condition for $\diracBFEEC{\cdot}{\cdot}$ on $\hilbertSpaceFE[][H]$, and therefore
    \[
        \norm{\diracSobolevOpFEEC^{-1}}_{\hilbertSpaceFE[][H]'\to\hilbertSpaceFE[][H]} \lesssim 1.
    \]

    For the mass-lumped problem, we analogously have
    \[
        \diracBDEC{u_h}{v_h} \leq 2\normm{u_h}[\XLambdak[H]]\normm{v_h}[\XLambdak[H]]\quad\forall u_h, v_h \in\ringWhitneyForms,
    \]
    which implies
    \[
        \norm{\diracSobolevOpDEC}_{\hilbertSpaceDEC[][{\XLambdak[H]}]\to\hilbertSpaceDEC[][{\XLambdak[H]}]'} \lesssim 1.
    \]
    Furthermore, \Cref{lemma:infsup_dec} yields
    \[
        \norm{\diracSobolevOpDEC^{-1}}_{\hilbertSpaceDEC[][{\XLambdak[H]}]'\to\hilbertSpaceDEC[][{\XLambdak[H]}]} \lesssim 1,
    \]
    concluding the proof.
\end{proof}

\begin{theorem}[Spectral Equivalence to FEEC, cf.\ \cite{Hiptmair2006}]\label{theorem:spec_eq_dirac}
    Let
    \[
        \kappa\left(\mathsf{A}\right) := \norm{\mathsf{A}}_{\hilbertSpaceFE[][H]\to\hilbertSpaceFE[][H] }\,\norm{\mathsf{A}^{-1}}_{\hilbertSpaceFE[][H]\to\hilbertSpaceFE[][H]}
    \]
    denote the condition number of an automorphism $\mathsf{A}: \hilbertSpaceFE[][H]\to\hilbertSpaceFE[][H]$ and $\rho(\mathsf{A})$ the spectral radius. Then
    \begin{gather*}
        \kappa\left( \left[ \IsoSobolevFEDEC[*] \diracSobolevOpDEC \IsoSobolevFEDEC \right]^{-1} \diracSobolevOpFEEC \right) \lesssim 1, \\
        \rho\left( \left[ \IsoSobolevFEDEC[*] \diracSobolevOpDEC \IsoSobolevFEDEC \right]^{-1} \diracSobolevOpFEEC \right) \lesssim 1, \quad
        \rho\left( \diracSobolevOpFEEC^{-1} \IsoSobolevFEDEC[*] \diracSobolevOpDEC \IsoSobolevFEDEC \right) \lesssim 1.
    \end{gather*}
\end{theorem}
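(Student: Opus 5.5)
The plan is to set $\mathsf{P} := \IsoSobolevFEDEC[*] \diracSobolevOpDEC \IsoSobolevFEDEC$, where $\IsoSobolevFEDEC[*]$ is the Banach adjoint of $\IsoSobolevFEDEC$. Then $\mathsf{P}$ maps $\hilbertSpaceFE[][H]\to\hilbertSpaceFE[][H]'$, and $\mathsf{P}^{-1}\diracSobolevOpFEEC$ is an automorphism of $\hilbertSpaceFE[][H]$. Following \cite{Hiptmair2006}, I would bound the operator norms of this map and of its inverse $\diracSobolevOpFEEC^{-1}\mathsf{P}$ separately, using only submultiplicativity. The condition number is then the product of the two bounds.

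\textbf{Bounding $\mathsf{P}^{-1}\diracSobolevOpFEEC$.} Invertibility of $\mathsf{P}$ is immediate: $\IsoSobolevFEDEC$ is an isomorphism (it is the identity on sets), its adjoint is one too, and $\diracSobolevOpDEC$ is invertible by \Cref{lemma:h_uniform_opnorms}. Hence
\[
\mathsf{P}^{-1} = \IsoSobolevFEDEC^{-1}\diracSobolevOpDEC^{-1}\left(\IsoSobolevFEDEC[*]\right)^{-1}.
\]
Since $\norm{\IsoSobolevFEDEC[*]} = \norm{\IsoSobolevFEDEC}$ and the same holds for the inverses, \Cref{assumption:norm_eq} bounds all four isomorphism factors $h$-uniformly. \Cref{lemma:h_uniform_opnorms} bounds $\norm{\diracSobolevOpDEC^{-1}}$ and $\norm{\diracSobolevOpFEEC}$. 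Therefore
\[
\norm{\mathsf{P}^{-1}\diracSobolevOpFEEC}\le \norm{\IsoSobolevFEDEC^{-1}}\,\norm{\diracSobolevOpDEC^{-1}}\,\norm{(\IsoSobolevFEDEC[*])^{-1}}\,\norm{\diracSobolevOpFEEC}\lesssim 1.
\]

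\textbf{Bounding the inverse.} Symmetrically,
\[
\norm{\diracSobolevOpFEEC^{-1}\mathsf{P}}\le \norm{\diracSobolevOpFEEC^{-1}}\,\norm{\IsoSobolevFEDEC[*]}\,\norm{\diracSobolevOpDEC}\,\norm{\IsoSobolevFEDEC}\lesssim 1,
\]
again by \Cref{lemma:h_uniform_opnorms} and \Cref{assumption:norm_eq}. Multiplying the two bounds gives $\kappa\lesssim 1$. The spectral radius claims follow from $\rho(\mathsf{A})\le\norm{\mathsf{A}}$ applied to each of the two operators.

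\textbf{Main obstacle.} The only delicate step is the bookkeeping of norms. One must check that the Sobolev-level isomorphism is $h$-uniformly bounded, which the assumption notes follows from the $L^2$ equivalence applied to both $u_h$ and $\dh u_h$. One must also check that the dual norms of $\hilbertSpaceFE[][H]'$ and $\hilbertSpaceDEC[][\XLambdak[H]]'$ are those induced by the respective Sobolev norms, so that the adjoint inherits the bound. The composition then acts correctly between $\hilbertSpaceFE[][H]'$ and $\hilbertSpaceDEC[][\XLambdak[H]]'$.
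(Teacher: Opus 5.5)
Your proposal is correct and matches the paper's proof. Both bound the operator and its inverse by submultiplicativity using \Cref{assumption:norm_eq} and \Cref{lemma:h_uniform_opnorms}, multiply the two bounds to get $\kappa$, and use $\rho(\mathsf{A})\le\|\mathsf{A}\|$ for the spectral radii; your explicit tracking of the adjoint and inverse isomorphism norms only spells out what the paper absorbs into $\lesssim$.
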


\begin{proof} 
    Let $\mathsf{T} := \left[ \IsoSobolevFEDEC[*] \diracSobolevOpDEC \IsoSobolevFEDEC \right]^{-1} \diracSobolevOpFEEC$. We first note that due to \Cref{assumption:norm_eq} and the submultiplicativity of operator norms, we have
    \begin{align*}
        \norm{\mathsf{T}}_{\hilbertSpaceFE[][H]\to\hilbertSpaceFE[][H]} 
        &\lesssim \norm{\diracSobolevOpDEC^{-1}}_{\hilbertSpaceDEC[][{\XLambdak[H]}]'\to\hilbertSpaceDEC[][{\XLambdak[H]}]} 
        \norm{\diracSobolevOpFEEC}_{\hilbertSpaceFE[][H]\to\hilbertSpaceFE[][H]'}
    \end{align*}
    and
    \begin{align*}
        \norm{\mathsf{T}^{-1}}_{\hilbertSpaceFE[][H]\to\hilbertSpaceFE[][H]} 
        &\lesssim \norm{\diracSobolevOpDEC}_{\hilbertSpaceDEC[][{\XLambdak[H]}]\to\hilbertSpaceDEC[][{\XLambdak[H]}]'} 
        \norm{\diracSobolevOpFEEC^{-1}}_{\hilbertSpaceFE[][H]'\to\hilbertSpaceFE[][H]}.
    \end{align*}
    The estimate on the condition number is then a trivial consequence of \Cref{lemma:h_uniform_opnorms}. The eigenvalues can be bounded from above by the operator norms, so the estimates on $\rho(\mathsf{T}), \rho\left(\mathsf{T}^{-1}\right)$ follow immediately, which concludes the proof.
\end{proof}

\subsection{Hodge-Laplacian}

For \eqref{eq:laplace_weak}, we get: seek $\sigma_h\in\ringWhitneyForms[k - 1], u_h\in\ringWhitneyForms[k]$ such that
\begin{equation}\label{eq:laplace_feec_discrete_problem}
\begin{array}{rclcll}
    \inner{u_h}{\d[k - 1]\tau_h}_{\XLambdak[L^2][k]} & - & \inner{\sigma_h}{\tau_h}_{\XLambdak[L^2][k - 1]} & = & 0, & \forall\tau_h\in\ringWhitneyForms[k - 1]\\
    \inner{\d[k-1]\sigma_h}{v_h}_{\XLambdak[L^2][k]} & + & \inner{\d[k]u_h}{\d[k]v_h}_{\XLambdak[L^2][k + 1]} & = & \inner{f}{v_h}_{\XLambdak[L^2][k]}, & \forall v_h\in\ringWhitneyForms[k].
\end{array}
\end{equation}
To ease the notation, we introduce the spaces
\begin{equation}\label{eq:laplace_product_spaces}
\begin{aligned}
    \laplaceDiscWFEEC[k]   &:= \hilbertSpaceFE[k-1]\times\hilbertSpaceFE[k], \quad & \laplaceDiscWSobolevFEEC[k] &:= \hilbertSpaceFE[k-1][H]\times\hilbertSpaceFE[k][H], \\
    \laplaceDiscWDEC[k]   &:= \hilbertSpaceDEC[k-1]\times\hilbertSpaceDEC[k], \quad & \laplaceDiscWSobolevDEC[k] &:= \hilbertSpaceDEC[k-1][{\XLambdak[H][k-1]}]\times\hilbertSpaceDEC[k][{\XLambdak[H][k]}].
\end{aligned}
\end{equation}
The associated bilinear form on $\laplaceDiscWFEEC[k] \times \laplaceDiscWFEEC[k]$ is
\begin{equation}\label{eq:laplace_feec_bilinear_form}
  \begin{aligned}
    \laplaceBFEEC{(\sigma_h, u_h)}{(\tau_h, v_h)}[k]
    :=\;& \inner{\sigma_h}{\tau_h}_{\XLambdak[L^2][k - 1]}
    - \inner{u_h}{\d[k - 1]\tau_h}_{\XLambdak[L^2][k]} \\
    &- \inner{\d[k - 1]\sigma_h}{v_h}_{\XLambdak[L^2][k]}
    - \inner{\d[k]u_h}{\d[k]v_h}_{\XLambdak[L^2][k + 1]} .
  \end{aligned}
\end{equation}
For the bilinear form $\laplaceBFEEC{\cdot}{\cdot}[k]: \laplaceDiscWFEEC[k] \times \laplaceDiscWFEEC[k] \to \R$, let 
\begin{equation}\label{eq:laplace_galerkin_op_feec}
    \laplaceGalerkinOpFEEC[k]: \laplaceDiscWFEEC[k] \to (\laplaceDiscWFEEC[k])'
\end{equation}
denote the induced operator and define 
\begin{equation}\label{eq:laplace_op_feec_def}
    \laplaceOpFEEC[k] := \rieszFE[k]\laplaceGalerkinOpFEEC[k]: \laplaceDiscWFEEC[k] \to \laplaceDiscWFEEC[k].
\end{equation}
Equivalently,
\[
    \laplaceOpFEEC[k]
    =
    \begin{pmatrix}
        -\Id & \deltaFE[k] \\
        \dh[k-1] & \deltaFE[k+1]\dh[k]
    \end{pmatrix}.
\]
When replacing the $L^2$ norms by the Sobolev norms, we view the bilinear form on $\laplaceDiscWSobolevFEEC[k] \times \laplaceDiscWSobolevFEEC[k] \to \R$, in which case we denote the induced operator by 
\begin{equation}\label{eq:laplace_sobolev_op_feec}
    \laplaceSobolevOpFEEC[k]: \laplaceDiscWSobolevFEEC[k] \to (\laplaceDiscWSobolevFEEC[k])'.
\end{equation}

Applying mass-lumping (replacing the inner products), we get the corresponding bilinear form $\laplaceBDEC{\cdot}{\cdot}[k]: \laplaceDiscWDEC[k] \times \laplaceDiscWDEC[k] \to \R$ and the induced linear map 
\begin{equation}\label{eq:laplace_galerkin_op_dec}
    \laplaceGalerkinOpDEC[k]: \laplaceDiscWDEC[k] \to (\laplaceDiscWDEC[k])'.
\end{equation}
Analogously, we define the endomorphism $\laplaceOpDEC[k] := \rieszDEC[k]\laplaceGalerkinOpDEC[k]$. Equivalently,
\[
    \laplaceOpDEC[k]
    =
    \begin{pmatrix}
        -\Id & \deltaML[k] \\
        \dh[k-1] & \deltaML[k+1]\dh[k]
    \end{pmatrix}.
\]
We denote the induced map of the bilinear form on the mass-lumped discrete Sobolev space by
\begin{equation}\label{eq:laplace_sobolev_op_dec}
    \laplaceSobolevOpDEC[k]: \laplaceDiscWSobolevDEC[k] \to (\laplaceDiscWSobolevDEC[k])'.
\end{equation}

\begin{lemma}[{Inf-Sup Inequality for FEEC, \cite[Theorem 5.4]{Arnold2018}}]\label{lemma:laplace_feec_stability}
    $\laplaceBFEEC{\cdot}{\cdot}[k]$ fulfills an inf-sup condition with a constant that is independent of the mesh-width.
\end{lemma}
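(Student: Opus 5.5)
The plan is to prove the inf-sup condition for $\laplaceBFEEC{\cdot}{\cdot}[k]$ on $\laplaceDiscWSobolevFEEC[k]$ directly, following the classical argument of \cite[Theorem 5.4]{Arnold2018} specialized to trivial topology, where the discrete harmonic forms $\harmonics[k]$ vanish. The only analytic input is the FEEC Poincar\'e inequality \cite[Theorem 5.2]{Arnold2018}:
\[
\formNorm{w_h}[L^2][j] \lesssim \formNorm{\dh[j] w_h}[L^2][j+1] \quad \text{for } w_h\in\cycles[j,\perpFEEC],
\]
with an $h$-independent constant $c_P$. This constant is $h$-uniform because it relies on the uniformly bounded, commuting cochain projections on shape-regular meshes, which I will take as given from \cite{Arnold2018}.

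Given nonzero $(\sigma_h,u_h)$, I first take the FEEC-orthogonal Hodge decomposition from \Cref{lemma:dec_hodge}:
\[
u_h=\dh[k-1]\rho_h+u_h^\perp, \qquad \rho_h\in\cycles[k-1,\perpFEEC],\quad u_h^\perp\in\cycles[k,\perpFEEC].
\]
The harmonic part is absent by triviality of the topology; for $k=n$ the mean-zero constraint in $\ringWhitneyForms[n]$ plays this role. I then define the test pair
\[
\tau_h:=\sigma_h-\tfrac{1}{c_P^2}\rho_h,\qquad v_h:=-u_h-\dh[k-1]\sigma_h,
\]
with the signs adjusted to the sign convention of \eqref{eq:laplace_feec_bilinear_form}.

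Inserting these into the bilinear form gives
\[
\norm{\sigma_h}^2+\norm{\dh\sigma_h}^2+\norm{\dh u_h}^2+\tfrac{1}{c_P^2}\norm{\dh\rho_h}^2+\text{cross terms}.
\]
The cross terms are $-\tfrac1{c_P^2}\inner{\sigma_h}{\rho_h}$ and a term $\inner{u_h}{\dh\sigma_h}$. The latter cancels between the two lines of the form, since $\inner{u_h}{\dh\tau}$ and $\inner{\dh\sigma}{v}$ carry the same sign. I bound $\tfrac1{c_P^2}\inner{\sigma_h}{\rho_h}$ by Cauchy–Schwarz and Young, using
\[
\norm{\rho_h}\le c_P\norm{\dh\rho_h}
\]
from the Poincar\'e inequality in degree $k-1$, and absorb it into $\tfrac12\norm{\sigma_h}^2+\tfrac1{2c_P^2}\norm{\dh\rho_h}^2$.

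This leaves a lower bound
\[
\gtrsim \norm{\sigma_h}_{H}^2+\norm{\dh\rho_h}^2+\norm{\dh u_h}^2 .
\]
By orthogonality and the Poincar\'e inequality in degree $k$ applied to $u_h^\perp$,
\[
\norm{u_h}^2=\norm{\dh\rho_h}^2+\norm{u_h^\perp}^2\lesssim \norm{\dh\rho_h}^2+\norm{\dh u_h}^2,
\]
so the lower bound controls $\norm{(\sigma_h,u_h)}^2_{\laplaceDiscWSobolevFEEC[k]}$. It remains to show $\norm{(\tau_h,v_h)}\lesssim\norm{(\sigma_h,u_h)}$ in the Sobolev product norm. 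This holds because $\norm{\rho_h}_H\lesssim\norm{\dh\rho_h}\le\norm{u_h}$, and because $\dh v_h=-\dh u_h$ since $\dh\dh=0$. Dividing the two estimates gives the inf-sup constant.

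The main obstacle is the bookkeeping of signs in \eqref{eq:laplace_feec_bilinear_form}, so that the cross term $\inner{u_h}{\dh\sigma_h}$ genuinely cancels and the $\rho_h$-perturbation produces $+\norm{\dh\rho_h}^2$ rather than its negative. I would fix the signs first by testing with $(\tau_h,v_h)=(\sigma_h,-u_h)$ and then add the two corrections one at a time. The edge cases $k=0$, where $\sigma_h$ is absent and the form is the coercive $-\norm{\dh u_h}^2$ on mean-free or boundary-conditioned functions, and $k=n$, where $\dh[n]=0$, are handled by dropping the corresponding terms.
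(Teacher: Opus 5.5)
Your proof is correct and is exactly the argument behind the cited \cite[Theorem 5.4]{Arnold2018}, which the paper invokes without further proof. That argument uses the Arnold--Falk--Winther test pair $\tau_h=\sigma_h-c_P^{-2}\rho_h$, $v_h=-u_h-\dh[k-1]\sigma_h$ together with the $h$-uniform discrete Poincar\'e inequality, and you rightly drop the harmonic component because of the trivial topology. One small check is worth adding: for $k=n$, your $v_h$ stays in the mean-free space, since $\int_\Omega \dh[n-1]\sigma_h=0$ when $\sigma_h$ has vanishing trace.
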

\begin{lemma}[{Inf-Sup Inequality for the Mass-Lumped System}]\label{lemma:laplace_dec_stability}
    $\laplaceBDEC{\cdot}{\cdot}[k]$ fulfills an inf-sup condition with a constant that is independent of the mesh-width.
\end{lemma}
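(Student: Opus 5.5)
We mimic the standard FEEC stability proof of \cite[Theorem 5.4]{Arnold2018}, replacing every $L^2$ inner product and norm by its mass-lumped counterpart. The only analytic input is the Poincar\'e inequality in mass-lumped norms (\Cref{proposition:dec_poincare}) and the mass-lumped Hodge decomposition (\Cref{lemma:dec_hodge}). The constants then depend only on the constant $c_p$ from \Cref{proposition:dec_poincare}, which is $h$-uniform by \Cref{assumption:norm_eq}.

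Take $(\sigma_h,u_h)\in\laplaceDiscWSobolevDEC[k]$ nonzero. Decompose $u_h$ with respect to the mass-lumped inner product as
\[
u_h=\dh[k-1]\rho_h+\beta_h+\eta_h,
\qquad \rho_h\in\cycles[k-1,\perpML],\quad \beta_h\in\cycles[k,\perpML],\quad \eta_h\in\harmonicsML[k].
\]
For trivial topology, the dimension statement in \Cref{lemma:dec_hodge} gives $\harmonicsML[k]=\{0\}$, so $\eta_h=0$. Following Arnold, choose the test pair
\[
\tau_h:=\sigma_h-\tfrac{1}{c_p^2}\rho_h,\qquad v_h:=-u_h-\dh[k-1]\sigma_h,
\]
with signs adjusted to the sign convention of $\laplaceBDEC{\cdot}{\cdot}[k]$, which mirrors \eqref{eq:laplace_feec_bilinear_form}.

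First, bound the test pair. By \Cref{proposition:dec_poincare},
\[
\normm{\rho_h}[{\XLambdak[H][k-1]}]\lesssim\normm{\dh[k-1]\rho_h}[k]\le\normm{u_h}[k],
\]
where the last step uses the mass-lumped orthogonality of the decomposition. Hence
\[
\normm{\tau_h}[{\XLambdak[H][k-1]}]+\normm{v_h}[{\XLambdak[H][k]}]\lesssim\normm{(\sigma_h,u_h)}[\laplaceDiscWSobolevDEC[k]].
\]

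Second, expand $\laplaceBDEC{(\sigma_h,u_h)}{(\tau_h,v_h)}[k]$. The cross terms $\innerc{u_h}{\dh[k-1]\sigma_h}$ cancel. What remains is
\[
\normm{\sigma_h}^2+\normm{\dh[k-1]\sigma_h}^2+\normm{\dh[k]u_h}^2+\tfrac1{c_p^2}\normm{\dh[k-1]\rho_h}^2-\tfrac1{c_p^2}\innerc{\sigma_h}{\rho_h}.
\]
We then bound the remaining pieces as follows.
\begin{itemize}
\item The term $\innerc{u_h}{\dh[k-1]\rho_h}=\normm{\dh[k-1]\rho_h}^2$, by orthogonality.
\item The mixed term is handled by Cauchy--Schwarz and Young, together with $\normm{\rho_h}\le c_p\normm{\dh[k-1]\rho_h}$. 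This lets it be absorbed into $\tfrac12\normm{\sigma_h}^2+\tfrac1{2c_p^2}\normm{\dh[k-1]\rho_h}^2$.
\item The $\beta_h$ part satisfies $\normm{\beta_h}\le c_p\normm{\dh[k]\beta_h}=c_p\normm{\dh[k]u_h}$, again by \Cref{proposition:dec_poincare}.
\item The term $\normm{\dh[k]u_h}^2$ controls both $\normm{\beta_h}^2$ and the $\dh[k]u_h$ part of the $H$-norm.
\end{itemize}
Summing these gives a lower bound $\gtrsim\normm{\sigma_h}[{\XLambdak[H][k-1]}]^2+\normm{u_h}[{\XLambdak[H][k]}]^2$. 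Combining this with the first step yields the inf-sup condition. The degenerate cases $k=0$ and $k=n$ need a brief check: for $k=0$ there is no $\sigma$, and for $k=n$ there is no $\dh[k]$ but $u_h$ has zero mean.

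The main obstacle is the bookkeeping of the constant $1/c_p^2$ in $\tau_h$, so that the Young absorption closes with a positive constant. A further concern is that everything must use mass-lumped orthogonality consistently. Mixing in FEEC orthogonality would reintroduce $h$-dependent constants, and this is exactly why \Cref{proposition:dec_poincare} was proven in mass-lumped norms.
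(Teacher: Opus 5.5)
Your proposal is correct and takes the same route as the paper, which simply defers to Arnold's abstract stability proof \cite[Theorem 4.9]{Arnold2018}, run in the mass-lumped Hilbert complex, with \Cref{proposition:dec_poincare} and \Cref{lemma:dec_hodge} supplying the $h$-uniform Poincar\'e constant and the orthogonal decomposition. Your explicit test pair $\tau_h=\sigma_h-c_p^{-2}\rho_h$, $v_h=-u_h-\dh[k-1]\sigma_h$ correctly accounts for the paper's sign convention, and the expansion and Young absorption check out. Only your closing remark is slightly off: by \Cref{assumption:norm_eq}, mixing in FEEC orthogonality would not create $h$-dependent constants (the proof of \Cref{proposition:dec_poincare} itself mixes both); it would only destroy exact identities such as $\llbracket u_h, \dh[k-1]\rho_h\rrbracket = \normm{\dh[k-1]\rho_h}^2$. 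That remark plays no role in your argument.
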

\begin{proof}
    For the proof, we refer to \cite[Theorem 4.9]{Arnold2018}. The steps in the proof are the same as for FEEC, as we have a Poincar\'e inequality by \Cref{proposition:dec_poincare} and a Hodge-decomposition by \Cref{lemma:dec_hodge}.
\end{proof}

\begin{theorem}[Spectral Equivalence to FEEC]\label{theorem:spec_eq_laplace}
    Let
    \[
        \kappa\left(\mathsf{A}\right) := \norm{\mathsf{A}}_{\laplaceDiscWSobolevFEEC[k]\to \laplaceDiscWSobolevFEEC[k]}\norm{\mathsf{A}^{-1}}_{\laplaceDiscWSobolevFEEC[k]\to \laplaceDiscWSobolevFEEC[k]}
    \]
    denote the condition number of an automorphism $\mathsf{A}: \laplaceDiscWSobolevFEEC[k]\to \laplaceDiscWSobolevFEEC[k]$ and $\rho(\mathsf{A})$ the spectral radius. Then
    \begin{gather*}
        \kappa\left( \left[ \IsoSobolevFEDEC[k,*] \laplaceSobolevOpDEC[k] \IsoSobolevFEDEC[k] \right]^{-1} \laplaceSobolevOpFEEC[k] \right) \lesssim 1, \\
        \rho\left( \left[ \IsoSobolevFEDEC[k,*] \laplaceSobolevOpDEC[k] \IsoSobolevFEDEC[k] \right]^{-1} \laplaceSobolevOpFEEC[k] \right) \lesssim 1, \quad
        \rho\left( \left[ \laplaceSobolevOpFEEC[k] \right]^{-1} \IsoSobolevFEDEC[k,*] \laplaceSobolevOpDEC[k] \IsoSobolevFEDEC[k] \right) \lesssim 1.
    \end{gather*}
\end{theorem}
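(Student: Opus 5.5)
The plan mirrors the proof of \Cref{theorem:spec_eq_dirac}. Set $\mathsf{T} := \left[ \IsoSobolevFEDEC[k,*] \laplaceSobolevOpDEC[k] \IsoSobolevFEDEC[k] \right]^{-1} \laplaceSobolevOpFEEC[k]$, viewed as an automorphism of $\laplaceDiscWSobolevFEEC[k]$. Bounding $\kappa(\mathsf{T})$ and the two spectral radii reduces to $h$-uniform bounds on four operator norms: those of $\laplaceSobolevOpFEEC[k]$, $\laplaceSobolevOpDEC[k]$ and their inverses. This is the analogue of \Cref{lemma:h_uniform_opnorms} for the Hodge-Laplacian. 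I would first prove this analogue and then conclude exactly as in the Dirac case.

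\textbf{Step 1: continuity.} By Cauchy--Schwarz on each of the four terms of \eqref{eq:laplace_feec_bilinear_form},
\[
\abs{\laplaceBFEEC{(\sigma_h,u_h)}{(\tau_h,v_h)}[k]} \le 4\,\norm{(\sigma_h,u_h)}_{\laplaceDiscWSobolevFEEC[k]}\norm{(\tau_h,v_h)}_{\laplaceDiscWSobolevFEEC[k]},
\]
where the product norm is built from $\norm{\cdot}_{\XLambdak[H][k-1]}$ and $\norm{\cdot}_{\XLambdak[H][k]}$. Hence $\norm{\laplaceSobolevOpFEEC[k]}\le 4$. The identical computation with the mass-lumped inner products and the norms $\normm{\cdot}[\XLambdak[H][k-1]]$, $\normm{\cdot}[\XLambdak[H][k]]$ gives $\norm{\laplaceSobolevOpDEC[k]}\le 4$.

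\textbf{Step 2: inverses.} \Cref{lemma:laplace_feec_stability} and \Cref{lemma:laplace_dec_stability} give $h$-uniform inf-sup constants for both forms on these spaces. Since the spaces are finite dimensional and the forms are symmetric, the inf-sup condition yields injectivity, hence bijectivity. The standard Babu\v{s}ka argument then gives $\norm{[\laplaceSobolevOpFEEC[k]]^{-1}}\lesssim 1$ and $\norm{[\laplaceSobolevOpDEC[k]]^{-1}}\lesssim 1$.

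\textbf{Step 3: transfer between spaces.} \Cref{assumption:norm_eq} makes $\IsoSobolevFEDEC[k-1]\times\IsoSobolevFEDEC[k]$ and its inverse $h$-uniformly bounded. This product is what $\IsoSobolevFEDEC[k]$ denotes on the product space, and the bounds also hold for the adjoints. Submultiplicativity then gives
\[
\norm{\mathsf{T}} \lesssim \norm{[\laplaceSobolevOpDEC[k]]^{-1}}\,\norm{\laplaceSobolevOpFEEC[k]}, \qquad \norm{\mathsf{T}^{-1}} \lesssim \norm{\laplaceSobolevOpDEC[k]}\,\norm{[\laplaceSobolevOpFEEC[k]]^{-1}},
\]
so $\kappa(\mathsf{T})\lesssim 1$. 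Both spectral radii are bounded by the corresponding operator norms, since $\rho(\mathsf{A})\le\norm{\mathsf{A}}$. This gives the remaining two estimates.

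The only non-bookkeeping ingredient is \Cref{lemma:laplace_dec_stability}, which rests on the mass-lumped Poincaré inequality \Cref{proposition:dec_poincare}; it is already available. The main point needing care is therefore Step 3. One must check that the product-space isomorphism, measured in the Sobolev-type norms, is uniformly bounded. This follows from \Cref{assumption:norm_eq} applied in degrees $k-1$, $k$ and $k+1$, where degree $k+1$ is needed to control $\normm{\d[k]u_h}[k+1]$ against $\norm{\d[k]u_h}_{\XLambdak[L^2][k+1]}$. One must also treat the case $k=n$ correctly, where $u_h\in\ringWhitneyForms[n]$ has zero mean and the $\d[k]$ terms vanish.
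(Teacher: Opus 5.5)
Your proposal is correct and follows the paper's route. The paper proves the statement only by deferring to the argument of \Cref{theorem:spec_eq_dirac} together with the stability bounds of \Cref{lemma:laplace_feec_stability} and \Cref{lemma:laplace_dec_stability}, and your three steps (continuity, inf-sup bounds on the inverses, then submultiplicativity together with $\rho\le\norm{\cdot}$) supply exactly the omitted details; your remark that \Cref{assumption:norm_eq} is also needed in degree $k+1$, to transfer the $\d[k]$ part of the Sobolev norms, is a correct and useful addition.
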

\begin{proof}
    The proof follows the same arguments as in \Cref{theorem:spec_eq_dirac}, as we have stability bounds from \Cref{lemma:laplace_feec_stability} and \Cref{lemma:laplace_dec_stability} and will be skipped for the sake of brevity, see also \cite{Hiptmair2006}.
\end{proof}

\subsection{Magnetostatics}
\Cref{eq:magnetostatics_weak} in the discrete case becomes: seek $\sigma_h\in\ringWhitneyForms[0], u_h\in\ringWhitneyForms[1]$ such that
\begin{equation}
    \label{eq:discrete_magnetostatics}
    \begin{aligned}
        \inner{u_h}{\d[0]\tau_h}_{\XLambdak[L^2][1]} & = 0 \quad&&\forall\tau_h\in\ringWhitneyForms[0] \\
        \inner{\d[0]\sigma_h}{v_h}_{\XLambdak[L^2][1]} + \inner{\d[1]u_h}{\d[1]v_h}_{\XLambdak[L^2][2]} & = \inner{f}{v_h}_{\XLambdak[L^2][1]} \quad&&\forall v_h\in\ringWhitneyForms[1].
    \end{aligned}
\end{equation}

To ease the notation, we introduce the product spaces for the magnetostatics problem (corresponding to $k=1$):
\begin{equation}\label{eq:mag_product_spaces}
\begin{aligned}
    \magDiscWFEEC[1]   &:= \hilbertSpaceFE[0]\times\hilbertSpaceFE[1], \quad & \magDiscWSobolevFEEC[1] &:= \hilbertSpaceFE[0][H]\times\hilbertSpaceFE[1][H], \\
    \magDiscWDEC[1]   &:= \hilbertSpaceDEC[0]\times\hilbertSpaceDEC[1], \quad & \magDiscWSobolevDEC[1] &:= \hilbertSpaceDEC[0][{\XLambdak[H][0]}]\times\hilbertSpaceDEC[1][{\XLambdak[H][1]}].
\end{aligned}
\end{equation}

The associated bilinear form on $\magDiscWFEEC[1] \times \magDiscWFEEC[1]$ is
\begin{equation}\label{eq:mag_feec_bilinear_form}
  \begin{aligned}
    \magBFEEC{(\sigma_h, u_h)}{(\tau_h, v_h)}[1]
    :=\;& \inner{u_h}{\d[0]\tau_h}_{\XLambdak[L^2][1]} \\
    &+ \inner{\d[0]\sigma_h}{v_h}_{\XLambdak[L^2][1]}
    + \inner{\d[1]u_h}{\d[1]v_h}_{\XLambdak[L^2][2]}.
  \end{aligned}
\end{equation}
For the bilinear form $\magBFEEC{\cdot}{\cdot}[1]: \magDiscWFEEC[1] \times \magDiscWFEEC[1] \to \R$, let 
\begin{equation}\label{eq:mag_galerkin_op_feec}
    \magGalerkinOpFEEC[1]: \magDiscWFEEC[1] \to (\magDiscWFEEC[1])'
\end{equation}
denote the induced operator and define 
\begin{equation}\label{eq:mag_op_feec_def}
    \magOpFEEC[1] := \rieszFE[1]\magGalerkinOpFEEC[1]: \magDiscWFEEC[1] \to \magDiscWFEEC[1].
\end{equation}
Equivalently,
\[
    \magOpFEEC[1]
    =
    \begin{pmatrix}
        0 & \deltaFE[1] \\
        \dh[0] & \deltaFE[2]\dh[1]
    \end{pmatrix}.
\]
When replacing the $L^2$ norms by the Sobolev norms, we view the bilinear form on $\magDiscWSobolevFEEC[1] \times \magDiscWSobolevFEEC[1] \to \R$, in which case we denote the induced operator by 
\begin{equation}\label{eq:mag_sobolev_op_feec}
    \magSobolevOpFEEC[1]: \magDiscWSobolevFEEC[1] \to (\magDiscWSobolevFEEC[1])'.
\end{equation}

Applying mass-lumping (replacing the inner products), we get the corresponding bilinear form $\magBDEC{\cdot}{\cdot}[1]: \magDiscWDEC[1] \times \magDiscWDEC[1] \to \R$ and the induced linear map 
\begin{equation}\label{eq:mag_galerkin_op_dec}
    \magGalerkinOpDEC[1]: \magDiscWDEC[1] \to (\magDiscWDEC[1])'.
\end{equation}
Analogously, we define the endomorphism $\magOpDEC[1] := \rieszDEC[1]\magGalerkinOpDEC[1]$, or equivalently,
\[
    \magOpDEC[1]
    =
    \begin{pmatrix}
        0 & \deltaML[1] \\
        \dh[0] & \deltaML[2]\dh[1]
    \end{pmatrix}.
\]
We denote the induced map of the bilinear form on the mass-lumped discrete Sobolev space by
\begin{equation}\label{eq:mag_sobolev_op_dec}
    \magSobolevOpDEC[1]: \magDiscWSobolevDEC[1] \to (\magDiscWSobolevDEC[1])'.
\end{equation}

\begin{lemma}[Discrete Stability]\label{lemma:magnetostatics_discrete_stability}
    We have well-posedness and stability of \eqref{eq:discrete_magnetostatics}, i.e.\ a unique solution $(\sigma_h, u_h)$ which also fulfills
    \[
        \norm{u_h}_{\XLambdak[H][1]} + \norm{\sigma_h}_{\XLambdak[H][0]} \lesssim C\norm{f}_{\XLambdak[L^2][1]}.
    \]

    Furthermore, we have a similar result for the solution of the mass-lumped system $(\tilde{\sigma}_h, \tilde{u}_h)$, where
    \[
        \normm{\tilde{u}_h}[\XLambdak[H][1]] + \normm{\tilde{\sigma}_h}[\XLambdak[H][0]] \lesssim \norm{f}_{\XLambdak[L^2][1]}.
    \]
\end{lemma}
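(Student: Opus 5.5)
The plan is to apply the Babu\v{s}ka--Brezzi theory (\cite[Theorem 4.2.3]{BB13}) to the saddle-point structure of \eqref{eq:discrete_magnetostatics}, viewed as the bilinear forms $a(u,v) := \inner{\d[1]u}{\d[1]v}_{\XLambdak[L^2][2]}$ on $\ringWhitneyForms[1]$ and $b(v,\tau) := \inner{v}{\d[0]\tau}_{\XLambdak[L^2][1]}$ on $\ringWhitneyForms[1]\times\ringWhitneyForms[0]$. The mass-lumped version is handled the same way with $\innerc{\cdot}{\cdot}[k]$ in place of the $L^2$ inner products. We verify the two Brezzi conditions with $h$-independent constants in both settings. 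Boundedness of $a$ and $b$ in the (mass-lumped) $H\Lambda$ norms is immediate from Cauchy--Schwarz. For the right-hand side, $\inner{f}{v_h}_{\XLambdak[L^2][1]}\le \norm{f}_{\XLambdak[L^2][1]}\norm{v_h}_{\XLambdak[L^2][1]}$ holds in both cases. In the mass-lumped case we bound $\norm{v_h}_{\XLambdak[L^2][1]}\lesssim\normm{v_h}[1]$ by \Cref{assumption:norm_eq}. So the functional norm is $\lesssim\norm{f}_{\XLambdak[L^2][1]}$ in either norm.

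\emph{Inf-sup for $b$.} Given $\tau_h\in\ringWhitneyForms[0]$, take $v_h:=\dh[0]\tau_h$. Then $b(v_h,\tau_h)=\norm{\dh[0]\tau_h}^2$ and $\norm{v_h}_{\XLambdak[H][1]}=\norm{\dh[0]\tau_h}$, because $\d[1]\d[0]=0$. Trivial topology gives $\cycles[0]=\{0\}$, since constants are excluded by the boundary conditions. Hence $\ringWhitneyForms[0]=\cycles[0,\perpFEEC]$, and the FEEC Poincar\'e inequality yields $\norm{\tau_h}_{\XLambdak[H][0]}\lesssim\norm{\dh[0]\tau_h}$. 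In the mass-lumped setting we use the same test function together with \Cref{proposition:dec_poincare} with $k=0$, which is applicable because $\cycles[0,\perpML]$ is the whole space.

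\emph{Coercivity of $a$ on the kernel of $b$.} In the FEEC case the kernel is $\{v_h: \inner{v_h}{\dh[0]\tau_h}=0\ \forall\tau_h\}=\boundaries[1,\perpFEEC]$. With trivial topology, $\harmonics[1]=0$, so this equals $\cycles[1,\perpFEEC]$ by \Cref{lemma:dec_hodge}. The FEEC Poincar\'e inequality then gives $\norm{v_h}_{\XLambdak[H][1]}^2\lesssim\norm{\dh[1]v_h}^2=a(v_h,v_h)$. In the mass-lumped case the kernel is $\boundaries[1,\perpML]$, which equals $\cycles[1,\perpML]$ since $\dim\harmonicsML[1]=\dim\harmonics[1]=0$ by \Cref{lemma:dec_hodge}. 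Coercivity then follows from \Cref{proposition:dec_poincare} with $k=1$.

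The only step needing care is these kernel identifications. The main obstacle is that in the mass-lumped case the kernel of $b$ is defined by $\perpML$ and not $\perpFEEC$, so the FEEC Poincar\'e inequality cannot be used directly. \Cref{proposition:dec_poincare} is exactly what bridges this gap, combined with the vanishing of $\harmonicsML[1]$. Once both conditions hold with $h$-uniform constants, Brezzi's theorem gives existence, uniqueness and the stated a priori bounds in both the FEEC and the mass-lumped norms.
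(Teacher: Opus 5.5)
Your proposal is correct and follows the paper's route: the paper applies the Babu\v{s}ka--Brezzi theorem, taking the inf-sup and kernel-coercivity conditions from the FEEC Poincar\'e inequality, from \Cref{proposition:dec_poincare} in the mass-lumped case, and from the trivial topology. You make explicit what the paper leaves implicit, namely the test function $v_h = \dh[0]\tau_h$, the identification of the kernel of $b$ with $\cycles[1,\perpFEEC]$ (resp.\ $\cycles[1,\perpML]$) via \Cref{lemma:dec_hodge} and the vanishing harmonic forms, and the bound on the right-hand side via \Cref{assumption:norm_eq}.
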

\begin{proof}
    The proof follows the same argument as in \Cref{proposition:magnetostatics_stability}, as we have the discrete Poincar\'e inequality for the FEEC system from \cite[Theorem 5.2]{Arnold2018} and for the mass-lumped system from \Cref{proposition:dec_poincare}.
\end{proof}
\begin{remark}\label{remark:magnetostatics_sobolev_stability}
    We considered the case where the right-hand-side is given by $\inner{f}{\cdot}_{\XLambdak[L^2][1]}$, but the same argument also gives us stability if the right-hand-side is any bounded linear functional on $\XLambdak[H][1]$, in which case the norm of the linear functional appears in the stability bounds of \Cref{lemma:magnetostatics_discrete_stability} instead of $\norm{f}_{\XLambdak[L^2][1]}$.
\end{remark}

\begin{theorem}[Spectral Equivalence to FEEC]\label{theorem:spec_eq_mag}
    Let
    \[
        \kappa\left(\mathsf{A}\right) := \norm{\mathsf{A}}_{\magDiscWSobolevFEEC[1]\to \magDiscWSobolevFEEC[1]}\norm{\mathsf{A}^{-1}}_{\magDiscWSobolevFEEC[1]\to \magDiscWSobolevFEEC[1]}
    \] 
    denote the condition number of an automorphism $\mathsf{A}: \magDiscWSobolevFEEC[1]\to \magDiscWSobolevFEEC[1]$ and $\rho(\mathsf{A})$ the spectral radius. Then
    \begin{gather*}
        \kappa\left( \left[ \IsoSobolevFEDEC[1,*] \magSobolevOpDEC[1] \IsoSobolevFEDEC[1] \right]^{-1} \magSobolevOpFEEC[1] \right) \lesssim 1, \\
        \rho\left( \left[ \IsoSobolevFEDEC[1,*] \magSobolevOpDEC[1] \IsoSobolevFEDEC[1] \right]^{-1} \magSobolevOpFEEC[1] \right) \lesssim 1, \quad
        \rho\left( \left[ \magSobolevOpFEEC[1] \right]^{-1} \IsoSobolevFEDEC[1,*] \magSobolevOpDEC[1] \IsoSobolevFEDEC[1] \right) \lesssim 1.
    \end{gather*}
\end{theorem}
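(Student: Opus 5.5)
The proof follows the template of \Cref{theorem:spec_eq_dirac}. Set
\[
    \mathsf{T} := \left[ \IsoSobolevFEDEC[1,*] \magSobolevOpDEC[1] \IsoSobolevFEDEC[1] \right]^{-1} \magSobolevOpFEEC[1].
\]
The plan is to bound $\norm{\mathsf{T}}$ and $\norm{\mathsf{T}^{-1}}$ on $\magDiscWSobolevFEEC[1]$ by products of four operator norms: those of $\magSobolevOpFEEC[1]$, $\magSobolevOpDEC[1]$ and their inverses. The isomorphisms $\IsoSobolevFEDEC[1]$, $\IsoSobolevFEDEC[1,*]$ and their inverses have $h$-uniformly bounded norms by \Cref{assumption:norm_eq}, applied componentwise to the product spaces in \eqref{eq:mag_product_spaces}. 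Submultiplicativity then gives
\[
    \norm{\mathsf{T}} \lesssim \norm{(\magSobolevOpDEC[1])^{-1}}\,\norm{\magSobolevOpFEEC[1]}, \qquad \norm{\mathsf{T}^{-1}} \lesssim \norm{\magSobolevOpDEC[1]}\,\norm{(\magSobolevOpFEEC[1])^{-1}},
\]
exactly as in the Dirac case.

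It remains to establish the analogue of \Cref{lemma:h_uniform_opnorms} for the magnetostatics forms. Boundedness is immediate. Each of the three terms in \eqref{eq:mag_feec_bilinear_form} is controlled by Cauchy--Schwarz in $L^2$ and the $H\Lambda$ norms, so $\norm{\magSobolevOpFEEC[1]}\le 3$. The same argument in the mass-lumped inner products gives $\norm{\magSobolevOpDEC[1]}\lesssim 1$.

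For the inverses, I would use \Cref{lemma:magnetostatics_discrete_stability} together with \Cref{remark:magnetostatics_sobolev_stability}. For an arbitrary functional $\ell\in(\magDiscWSobolevFEEC[1])'$, the discrete problem with right-hand side $\ell$ has a unique solution whose $H\Lambda^0\times H\Lambda^1$ norm is bounded by $\norm{\ell}$. This bound is exactly $\norm{(\magSobolevOpFEEC[1])^{-1}}\lesssim 1$, and the mass-lumped statement gives $\norm{(\magSobolevOpDEC[1])^{-1}}\lesssim 1$ in the same way.

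One subtlety needs care. The remark treats functionals acting only on the $v_h$ component, whereas a general functional on the product space also acts on the first equation (the $\tau_h$ slot). I would handle this by invoking the full Babu\v{s}ka--Brezzi theorem rather than the remark as stated. The required ingredients are:
\begin{itemize}
    \item coercivity of $\inner{\d[1]u_h}{\d[1]v_h}$ on the discrete kernel $\{u_h: \inner{u_h}{\d[0]\tau_h}=0\ \forall\tau_h\}$, which by \Cref{lemma:dec_hodge} is $\cycles[1,\perpFEEC]$ (resp.\ $\cycles[1,\perpML]$), and which follows from the FEEC Poincar\'e inequality (resp.\ \Cref{proposition:dec_poincare});
    \item the inf-sup condition for $\inner{\d[0]\sigma_h}{v_h}$, obtained by choosing $v_h=\d[0]\sigma_h$ and using the Poincar\'e inequality on $0$-forms, since $\ker\dh[0]=\{0\}$ under the boundary conditions.
\end{itemize}
All constants are $h$-independent, and in the mass-lumped case \Cref{assumption:norm_eq} transfers them. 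This step, obtaining full Sobolev-norm stability for arbitrary right-hand sides in both components, is the main point requiring care. With it in place, Brezzi's theorem yields the bounds on the inverses.

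Finally, combining the four bounds gives $\kappa(\mathsf{T})\lesssim 1$. Since the spectral radius of an operator is at most its operator norm, $\rho(\mathsf{T})\le\norm{\mathsf{T}}\lesssim 1$ and $\rho(\mathsf{T}^{-1})\le\norm{\mathsf{T}^{-1}}\lesssim 1$. Here $\mathsf{T}^{-1}$ is precisely the second operator in the statement, so all three claims follow.
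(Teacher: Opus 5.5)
Your proposal is correct and follows essentially the same route as the paper: $h$-uniform continuity plus Babu\v{s}ka--Brezzi stability of both the FEEC and mass-lumped magnetostatics forms, then the submultiplicativity argument of \Cref{theorem:spec_eq_dirac} with \Cref{assumption:norm_eq} controlling the isomorphisms, and $\rho\le\norm{\cdot}$ for the spectral radii. You handle functionals acting on both components through the full Brezzi theorem, using kernel coercivity from the Poincar\'e inequalities and the inf-sup choice $v_h=\d[0]\sigma_h$; this makes explicit a point the paper covers only through its footnote and \Cref{remark:magnetostatics_sobolev_stability}.
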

\begin{proof}
    We have stability of the bilinear forms by \Cref{lemma:magnetostatics_discrete_stability}\footnote{Technically speaking, we have stability for right-hand-sides that are linear functionals on the Sobolev space, see \Cref{remark:magnetostatics_sobolev_stability}.} and continuity, the proof is analogous to \Cref{theorem:spec_eq_dirac} and \Cref{theorem:spec_eq_laplace} and will be skipped for the sake of brevity.
\end{proof}

\section{Multigrid}\label{section:mg}
To avoid repeating derivations for the Hodge-Dirac, Hodge-Laplacian, and magnetostatics problems, we introduce a unified generic notation. We will develop a multigrid method for the \emph{mass-lumped systems} using transforming smoothers, which will then serve as a preconditioner for the standard FEEC problem (see \Cref{subsection:preconditioning}).

Let $\genericDiscWFEEC$ and $\genericDiscWDEC$ denote the finite element spaces for the standard FEEC and mass-lumped discretizations, respectively. They are connected by a canonical isomorphism $\genericIso : \genericDiscWFEEC \to \genericDiscWDEC$ (with dual $\genericIso^*$) whose operator norms, along with its inverse, are $h$-uniformly bounded. The associated bilinear forms $\genericBFEEC{\cdot}{\cdot}$ and $\genericBDEC{\cdot}{\cdot}$ induce the standard and mass-lumped Galerkin operators mapping to their respective dual spaces:
\[
    \genericGalerkinOpFEEC : \genericDiscWFEEC \to \genericDiscWFEEC' \quad \text{and} \quad \genericGalerkinOpDEC : \genericDiscWDEC \to \genericDiscWDEC'.
\]
Using the corresponding Riesz isomorphisms $\genericRieszFE : \genericDiscWFEEC' \to \genericDiscWFEEC$ and $\genericRieszDEC : \genericDiscWDEC' \to \genericDiscWDEC$, we can define the endomorphisms $\genericOpFEEC := \genericRieszFE \genericGalerkinOpFEEC$ and $\genericOpDEC := \genericRieszDEC \genericGalerkinOpDEC$.

To formulate a multi-grid algorithm, we need a hierarchy of nested triangulations $\mathcal{T}_{h_0}, \mathcal{T}_{h_1}, \dots, \mathcal{T}_{h_L}$ of the underlying domain. The grid levels are indexed by $\ell = 0, 1, \dots, L$, where $\ell = 0$ corresponds to the coarsest grid and $\ell = L$ represents the finest target resolution. Each level is characterized by a mesh-width $h_\ell$, satisfying $h_0 > h_1 > \dots > h_L$. This sequence induces a nested hierarchy of FEM spaces:
\[
    \genericDiscWFEEC[][h_0] \subset \genericDiscWFEEC[][h_1] \subset \dots \subset \genericDiscWFEEC[][h_L] = \genericDiscWFEEC.
\]

In what follows, we define the multigrid algorithm explicitly for the mass-lumped endomorphism $\genericOpDEC$. One could also work directly with $\genericGalerkinOpDEC$, and doing so requires adjusting the transfer operators and smoothers we are going to define below. However, both formulations remain algebraically equivalent. The reason we like to work with endomorphisms is to define the transforming smoothers, as elucidated in \Cref{section:introduction}.

\subsection{Preconditioning the FEEC Problems}\label{subsection:preconditioning}
To see how a multigrid method developed for the mass-lumped endomorphism $\genericOpDEC$ is applied to solve the original FEEC Galerkin system, consider the target linear system
\begin{equation}\label{eq:feec_target_system}
    \genericGalerkinOpFEEC u = g,
\end{equation}
where $g \in \genericDiscWFEEC'$ is a given right-hand side. We can construct a preconditioner $\mathsf{B} : \genericDiscWFEEC' \to \genericDiscWFEEC$ by using the inverse of the mass-lumped system:
\begin{equation}\label{eq:preconditioner_ideal_action}
    \mathsf{B} \approx (\genericIso^* \genericGalerkinOpDEC \genericIso)^{-1} = \genericIso^{-1} (\genericGalerkinOpDEC)^{-1} (\genericIso^*)^{-1} = \genericIso^{-1} (\genericOpDEC)^{-1} \genericRieszDEC(\genericIso^*)^{-1}.
\end{equation}

Replacing the exact inverse $(\genericOpDEC)^{-1}$ with the action of a multigrid cycle $\mathsf{M}_{\textnormal{MG}} \approx (\genericOpDEC)^{-1}$, we obtain the action of the preconditioner for the original FEEC problem:
\begin{equation}\label{eq:feec_preconditioner_action}
    \mathsf{B} := \genericIso^{-1} \mathsf{M}_{\textnormal{MG}} \genericRieszDEC (\genericIso^*)^{-1}.
\end{equation}

Evaluating the preconditioner for a given fine-grid residual $r \in \genericDiscWFEEC'$ therefore requires three steps:
\begin{enumerate}
    \item Map the FEEC residual to the mass-lumped dual space via $(\genericIso^*)^{-1}$, and apply the mass-lumped Riesz isomorphism $\genericRieszDEC$ to obtain the primal vector $\genericRieszDEC (\genericIso^*)^{-1} r \in \genericDiscWDEC$. Because $\genericRieszDEC$ corresponds to the inverse of a diagonal mass matrix, this operation is explicit and cheap.
    \item Apply a multigrid cycle $\mathsf{M}_{\textnormal{MG}}$.
    \item Map back to the original FEEC space using the inverse isomorphism $\genericIso^{-1}$.
\end{enumerate}

In practice, $\genericIso$ will correspond to an identity matrix (we use the same basis for $\genericDiscWFEEC$ and $\genericDiscWDEC$), so on the levels of vectors and matrices, it is a no-op.

Note that even if the multigrid method were to yield a self-adjoint operator, the resulting preconditioned operator is indefinite. Standard Krylov subspace methods, such as Preconditioned MINRES (or Preconditioned Conjugate Gradient (PCG), but our problem is fundamentally indefinite, so PCG is not applicable in any case), fundamentally require the preconditioner to be positive definite to induce a valid inner product. Because our preconditioner lacks this property, these algorithms are not applicable. Consequently, we are forced to use solvers for generic, non-symmetric and indefinite linear systems like GMRES to solve the preconditioned system.

\subsection{Transfer Operators}\label{subsection:transfer_operators}
To formulate the coarse-grid correction scheme, we define appropriate transfer operators between adjacent levels in our grid hierarchy. Consider a fine grid level $\ell$ with mesh-width $h_\ell$ and its consecutive coarser level $\ell-1$ with mesh-width $h_{\ell-1}$. 

Because the spaces are nested, we have a canonical prolongation operator $\prolong[h_{\ell-1}] : \genericDiscWDEC[][h_{\ell-1}] \to \genericDiscWDEC[][h_\ell]$. Its dual operator, mapping between the respective dual spaces, is denoted by the adjoint $\prolong[h_{\ell-1}]^* : (\genericDiscWDEC[][h_\ell])' \to (\genericDiscWDEC[][h_{\ell-1}])'$. 

Let $\genericGalerkinOpDEC[][h_\ell]$ and $\genericGalerkinOpDEC[][h_{\ell-1}]$ denote the mass-lumped Galerkin operators assembled on the fine and coarse spaces, respectively. For the problem $\genericGalerkinOpDEC[][h_\ell] u = g$ with residual $r := g - \genericGalerkinOpDEC[][h_\ell] u \in (\genericDiscWDEC[][h_\ell])'$, a standard coarse-grid correction step is given by
\begin{equation}\label{eq:standard_cgc}
    u \mapsto u + \prolong[h_{\ell-1}] \left(\genericGalerkinOpDEC[][h_{\ell-1}]\right)^{-1} \prolong[h_{\ell-1}]^* r.
\end{equation}

Our primary objective, however, is to construct a multigrid scheme acting directly on the endomorphisms. Consider the fine-grid system $\genericOpDEC[][h_\ell] u = f$ for $f \in \genericDiscWDEC[][h_\ell]$. This is algebraically equivalent to the dual problem $\genericGalerkinOpDEC[][h_\ell] u = (\genericRieszDEC[h_\ell])^{-1} f$. Substituting $(\genericRieszDEC[h_\ell])^{-1} f$ for $g$ and using the relation $\genericGalerkinOpDEC[][h_\ell] = (\genericRieszDEC[h_\ell])^{-1}\genericOpDEC[][h_\ell]$ into \eqref{eq:standard_cgc} yields:
\begin{align*}
    u \mapsto& \; u + \prolong[h_{\ell-1}] \left(\genericGalerkinOpDEC[][h_{\ell-1}]\right)^{-1} \prolong[h_{\ell-1}]^* \left( (\genericRieszDEC[h_\ell])^{-1} f - (\genericRieszDEC[h_\ell])^{-1} \genericOpDEC[][h_\ell] u \right) \\
    =& \; u + \prolong[h_{\ell-1}] \left(\genericGalerkinOpDEC[][h_{\ell-1}]\right)^{-1} \prolong[h_{\ell-1}]^* (\genericRieszDEC[h_\ell])^{-1} \left(f - \genericOpDEC[][h_\ell] u \right).
\end{align*}
By using $\left(\genericGalerkinOpDEC[][h_{\ell-1}]\right)^{-1} = \left(\genericOpDEC[][h_{\ell-1}]\right)^{-1} \genericRieszDEC[h_{\ell-1}]$, we get:
\begin{equation}\label{eq:endomorphism_cgc}
    u \mapsto u + \prolong[h_{\ell-1}] \left(\genericOpDEC[][h_{\ell-1}]\right)^{-1} \left[ \genericRieszDEC[h_{\ell-1}] \prolong[h_{\ell-1}]^* (\genericRieszDEC[h_\ell])^{-1} \right] \left(f - \genericOpDEC[][h_\ell] u \right).
\end{equation}

We define
\begin{equation}\label{eq:modified_restriction}
    \restrict[h_\ell] := \genericRieszDEC[h_{\ell-1}] \prolong[h_{\ell-1}]^* (\genericRieszDEC[h_\ell])^{-1} : \genericDiscWDEC[][h_\ell] \to \genericDiscWDEC[][h_{\ell-1}],
\end{equation}
which will act like the restriction, but for endomorphisms instead of Galerkin operators.

\subsection{Transforming Smoothers}\label{subsection:smoothers}
The main challenge which remains to be tackled is the smoothers, as the operators in question are unsuitable for a smoother like Gauss-Seidel or Jacobi. 

Instead, we employ transforming smoothers, where the idea is to transform the system into one where we can apply a known smoothing procedure, and then transform back accordingly, see \cite{Wittum1990} for a more rigorous description of this topic.

In our case, we seek invertible left- and right-transformations $\transformL[h_\ell], \transformR[h_\ell]$ such that the resulting transformed operator
\[
    \transformedSystem[h_\ell] := \transformL[h_\ell] \genericOpDEC[][h_\ell] \transformR[h_\ell],
\] 
is amenable to classical point-smoothing techniques.

Since
\[
    \transformR[h_\ell] \transformedSystem[h_\ell]^{-1} \transformL[h_\ell] = \left(\genericOpDEC[][h_\ell]\right)^{-1},
\] 
we can translate an easily invertible $\transformedSystemSplit[h_\ell]$ (the inverse of which is supposed to mimic\footnote{A smoother need not necessarily be a good approximation of the inverse.} $\transformedSystem[h_\ell]^{-1}$) into an efficient iterative method for the original endomorphism $\genericOpDEC[][h_\ell]$: given a right-hand-side $f$ and an initial guess $u$, a step in the transforming smoother is explicitly defined as
\begin{equation}\label{eq:transforming_smoother_step_level}
    u \mapsto u + \transformR[h_\ell] \transformedSystemSplit[h_\ell]^{-1} \transformL[h_\ell] \left( f - \genericOpDEC[][h_\ell] u \right).
\end{equation}

For the problems at hand, we chose r-transformations, meaning $\transformL = \textnormal{Id}$, listed in \Cref{tab:transformations}, to transform the systems into a suitable shape. As the systems are block-triangular and the off-diagonal blocks are of lower-order, an appropriate Gauss-Seidel splitting\footnote{In this abstract setting, Gauss-Seidel can be interpreted as a successive subspace correction scheme induced by a decomposition of the underlying space into one-dimensional subspaces spanned by the elements of a specified basis.} of $\transformedSystem[h_l]$ may be used as a smoother\footnote{Alternative approaches could also be employed for the diagonal blocks in the transformed system, such as damped Jacobi, Chebyshev accelerated versions, or ILU \cite{Wittum1989} (or ICHOL, since these operators are self-adjoint and positive, and can therefore be symmetrized by multiplication with the diagonal mass-lumped mass matrix). However, in our experiments we restricted ourselves to Gauss–Seidel.}.

\begin{table}[ht]
    \centering
    \begin{tabular}{c|ccc}
        & Hodge-Dirac & Hodge-Laplace & Magnetostatics \\
        \toprule
        $\transformR$  & $\diracOpDEC \equiv \dh + \deltaML$ &
        $
            \begin{pmatrix}
                -I & \deltaML[k] \\
                \dh[k - 1] & I
            \end{pmatrix}
        $ &
        $
            \begin{pmatrix}
                0 & \deltaML[1] \\
                \dh[0] & I
            \end{pmatrix}
        $ \\
        $\transformedSystem$ & $\textnormal{blockdiag}\left(\dh[k - 1]\deltaML[k] + \deltaML[k + 1]\dh[k]\right)$ &
        $
            \begin{pmatrix}
                I + \deltaML[k]\dh[k - 1] & 0 \\
                -\dh[k - 1] & \dh[k - 1]\deltaML[k] + \deltaML[k + 1]\dh[k]
            \end{pmatrix}
        $ &
        $
            \begin{pmatrix}
                \deltaML[1]\dh[0] & \deltaML[1] \\
                0 & \dh[0]\deltaML[1] + \deltaML[2]\dh[1]
            \end{pmatrix}
        $  \\
    \end{tabular}
    \caption{$r$-transformations and transformed systems for the problems of interest.}
    \label{tab:transformations}
\end{table}

Assume now that we have chosen the standard basis of Whitney forms on all the finite element spaces, such that we can identify the operators with matrices, which we denote in bold. Then the smoothing iterations in \Cref{tab:smoothing_iterations} were used, where $\tril, \triu$ denote the lower and upper triangular part of the matrix, respectively.
\begin{table}[H]
    \centering
    \small
    \setlength{\tabcolsep}{4pt}
    \renewcommand{\arraystretch}{1.25}
    \begin{tabular}{@{}p{0.18\linewidth}p{0.78\linewidth}@{}}
        \hline
        \textbf{Problem} & \textbf{Smoothing iteration} \\
        \hline
        Hodge--Dirac &
        \(\displaystyle
        \begin{aligned}[t]
            \mathbf{u} \mapsto {}& \mathbf{u} + \diracMatOpDEC[][h_\ell] \transformedSystemSplitMat[h_\ell]^{-1}\left( \mathbf{f} - \diracMatOpDEC[][h_\ell] \mathbf{u} \right),\\
            \transformedSystemSplitMat[h_\ell] = {}& \tril\left[\textnormal{blockdiag}\left(\dhMat[k - 1]\deltaMLMat[k] + \deltaMLMat[k + 1]\dhMat[k]\right)\right].
        \end{aligned}
        \)\\
        \hline
        Hodge--Laplace &
        \(\displaystyle
        \begin{aligned}[t]
            \mathbf{u} \mapsto {}& \mathbf{u} +
            \begin{pmatrix}
                -\IdMat & \deltaMLMat[k] \\
                \dhMat[k - 1] & \IdMat
            \end{pmatrix}
            \transformedSystemSplitMat[h_\ell]^{-1}\left( \mathbf{f} - \laplaceMatOpDEC[k][h_\ell] \mathbf{u} \right),\\
            \transformedSystemSplitMat[h_\ell] = {}& \tril\left[
                \begin{pmatrix}
                    \IdMat + \deltaMLMat[k]\dhMat[k - 1] & \mathbf{0} \\
                    -\dhMat[k - 1] & \dhMat[k - 1]\deltaMLMat[k] + \deltaMLMat[k + 1]\dhMat[k]
                \end{pmatrix}
            \right].
        \end{aligned}
        \)\\
        \hline
        Magnetostatics &
        \(\displaystyle
        \begin{aligned}[t]
            \mathbf{u} \mapsto {}& \mathbf{u} +
            \begin{pmatrix}
                \mathbf{0} & \deltaMLMat[1] \\
                \dhMat[0] & \IdMat
            \end{pmatrix}
            \transformedSystemSplitMat[h_\ell]^{-1}\left( \mathbf{f} - \magMatOpDEC[][h_\ell] \mathbf{u} \right),\\
            \transformedSystemSplitMat[h_\ell] = {}& \triu\left[
                \begin{pmatrix}
                    \deltaMLMat[1]\dhMat[0] & \deltaMLMat[1] \\
                    \mathbf{0} & \dhMat[0]\deltaMLMat[1] + \deltaMLMat[2]\dhMat[1]
                \end{pmatrix}
            \right].
        \end{aligned}
        \)\\
        \hline
    \end{tabular}
    \caption{Smoothing iterations used for the problems of interest.}
    \label{tab:smoothing_iterations}
\end{table}

\subsection{Multigrid Algorithm}\label{subsection:mg_algorithm}
For completeness' sake, we list the multigrid for $\genericOpDEC$ algorithm here, see \cite{Hackbusch1985} for more details.

The recursive multigrid procedure for the endomorphism $\genericOpDEC[][h_\ell]$ is summarized in \Cref{alg:multigrid}.

The algorithm is governed by the presmoothing and postsmoothing iteration counts ($\nu_1$ and $\nu_2$), and the cycle index $\gamma$, where $\gamma = 1$ corresponds to a $\text{V}$- and $\gamma = 2$ to a $\text{W}$-cycle.

\begin{algorithm}[ht]
\caption{Multigrid Cycle $\text{MG}_{h_\ell}(u, f)$}
\label{alg:multigrid}
\textbf{Parameters:} Presmoothing steps $\nu_1 \ge 0$, postsmoothing steps $\nu_2 \ge 0$, cycle index $\gamma \in \{1, 2\}$.
\begin{algorithmic}[1]
\Require Level index $\ell \ge 1$, initial guess $u \in \genericDiscWDEC[][h_\ell]$, right-hand side $f \in \genericDiscWDEC[][h_\ell]$.
\Ensure Updated solution approximation $u^{\text{new}} \in \genericDiscWDEC[][h_\ell]$.
\Statex
\State \textbf{Presmoothing}
\State $v \gets u$
\For{$k = 1, \dots, \nu_1$}
    \State $v \gets v + \transformR[h_\ell] \transformedSystemSplit[h_\ell]^{-1} \bigl(f - \genericOpDEC[][h_\ell] v\bigr)$
\EndFor
\Statex
\State \textbf{Coarse-Grid Correction}
\State $r_{h_{\ell-1}} \gets \restrict[h_\ell] \bigl(f - \genericOpDEC[][h_\ell] v\bigr)$ \Comment{Restrict smooth residual}
\If{$\ell = 1$}
    \State $e_{h_0} \gets \bigl(\genericOpDEC[][h_0]\bigr)^{-1} r_{h_0}$ \Comment{Exact solve on coarsest grid}
\Else
    \State $e_{h_{\ell-1}} \gets \text{MG}_{h_{\ell-1}}^{\gamma}(0, r_{h_{\ell-1}})$ \Comment{Recursive coarse-grid solve}
\EndIf
\State $v \gets v + \prolong[h_{\ell-1}] e_{h_{\ell-1}}$ \Comment{Prolongate and apply correction}
\Statex
\State \textbf{Postsmoothing}
\For{$k = 1, \dots, \nu_2$}
    \State $v \gets v + \transformR[h_\ell] \transformedSystemSplit[h_\ell]^{-1} \bigl(f - \genericOpDEC[][h_\ell] v\bigr)$
\EndFor
\Statex
\State \Return $u^{\text{new}} \gets v$
\end{algorithmic}
\end{algorithm}

\subsection{Error Operator}
Here, we briefly introduce the error operator and describe how it relates to the convergence of the method. For more details, see \cite{Hackbusch1985}.

Because the multigrid algorithm (\Cref{alg:multigrid}) constitutes a linear stationary iterative process, its convergence can be characterized by a single error propagation operator (or iteration matrix), denoted by $\mathsf{E}_{h_\ell}$. Let $u$ be the exact solution to the discrete system $\genericOpDEC[][h_\ell] u = f$. For an approximation $u^{(m)}$ obtained after $m$ multigrid cycles, the error is defined as $e^{(m)} = u - u^{(m)}$. Due to the linearity of the method, this error evolves according to the relation $e^{(m)} = \mathsf{E}_{h_\ell}^m e^{(0)}$, where $e^{(0)}$ is the error of the initial guess. 

Following standard multigrid theory (cf.\ \cite[Lemma~7.1.4]{Hackbusch1985}), the operator $\mathsf{E}_{h_\ell}$ is defined recursively across the grid hierarchy. On the coarsest level, the exact solve eliminates the error entirely, yielding the base case $\mathsf{E}_{h_0} = 0$. For any finer level $\ell \ge 1$, the error propagation operator is given by:
\begin{equation}\label{eq:mg_error_operator}
    \mathsf{E}_{h_\ell} = 
    \underbrace{
        \left[ \Id - \transformR[h_\ell] \transformedSystemSplit[h_\ell]^{-1} \genericOpDEC[][h_\ell] \right]^{\nu_2}
    }_{\text{post-smoothing}}
    \underbrace{
        \left[ \Id - \prolong[h_{\ell-1}] \left( \Id - \mathsf{E}_{h_{\ell-1}}^\gamma \right) \genericOpDEC[-1][h_{\ell-1}] \restrict[h_\ell] \genericOpDEC[][h_\ell] \right]
    }_{\text{coarse-grid correction}}
    \underbrace{
        \left[ \Id - \transformR[h_\ell] \transformedSystemSplit[h_\ell]^{-1} \genericOpDEC[][h_\ell] \right]^{\nu_1}
    }_{\text{pre-smoothing}}.
\end{equation}

The long-term convergence behavior of the algorithm is strictly governed by the spectral radius $\rho(\mathsf{E}_{h_\ell})$. The method achieves optimal, $h$-uniform asymptotic convergence if this radius is bounded away from unity independently of the mesh size, i.e., $\rho(\mathsf{E}_{h_\ell}) \le \rho_{\text{asymp}} < 1$.

\section{Numerical Experiments}\label{section:results}
\subsection{Setup}
To assess the effectiveness of the methods, we implemented the methods in \texttt{C++} using \texttt{MFEM} \cite{mfem} and computed the spectral radii of \eqref{eq:mg_error_operator} for the problems in both 2D and 3D for the V- and W-cycles with one pre- and post-smoothening step using \cite{Spectra}. The code can be found at \url{https://github.com/rdabetic/mg_feec_transforming}.

For the mass-lumping, we investigated the following options:
\begin{enumerate}
    \item Scaled identity: $\rieszDEC[k] = h^{2k - n}\Id$, where $n$ is the dimension of $\Omega\subset\R^n$. By a straightforward scaling argument and assuming a shape-regular family of meshes, one expects $\rieszFE[k] \sim h^{2k - n} \Id$.
    \item Row sums: Construction of a diagonal mass matrix by taking, for each row, the $\ell^1$-norm of the corresponding row of the consistent mass matrix.
    \item Barycentric duals: Assembly of a non-orthogonal dual mesh by connecting the barycenters of the $k$-simplices to those of the $(k-1)$-simplices contained in each $k$-simplex, and then using the dual-to-primal volume ratios as the mass entries. In 2D, this yields a barycentric dual mesh with bent (kinked) dual edges, where the kinks occur at the intersections of dual and primal edges, i.e., at the midpoints of the primal edges. This construction is inspired by discrete exterior calculus (DEC), see \cite{GUP25,Dabetic2025}\footnote{If the mesh is \emph{well-centered} in 2D, then uniform refinement generates a shape-regular sequence of nested, well-centered meshes. In this setting, one can formulate discrete exterior calculus (DEC) using circumcentric dual meshes, which yields a \emph{consistent} discretization, provided that the exact solution satisfies appropriate regularity assumptions, see \cite{GUP25,Dabetic2025}.}.
\end{enumerate}

The tests were conducted by taking an initial coarse mesh and refining it using uniform refinement, as implemented in \cite{mfem}. We also added a few meshes with non-trivial topologies to see how the method behaves in that case as well.

For non-trivial topologies, by \Cref{lemma:dec_hodge}, we expect the rank deficiency to be determined by the Betti numbers, as the dimension of the mass-lumped harmonic forms equals the dimension of the FEEC harmonic forms, which in turn is determined by the (relative) Betti numbers of the domain, see \cite{Arnold2018} for more. Therefore we should expect to find a fixed number of eigenvectors of $\mathsf{E}_{h}$ with magnitude $\geq 1$, determined solely by the mesh topology.

For a concrete example: in the case of the Dirac operator, we have exactly $\sum_k b_k$ harmonic forms, where $b_k$ is the $k$-th Betti number. For the other problems, the $k$ corresponds to the dimension of the kernel of the operators, which is also given through the Betti numbers, see \cite{Arnold2018}.

In order to estimate the typical iteration counts for solving the \emph{consistent} FEEC linear systems with an iterative method, we applied (right-) preconditioned \texttt{GMRES}\footnote{\texttt{MFEM}\cite{mfem} implements this in the function \texttt{FGMRES}.} with restart parameter \texttt{20} and relative tolerance \texttt{1e-6}, using the corresponding multigrid cycle (V- or W-cycle with one smoothing step) built for the mass-lumped operator as a preconditioner. For each configuration, we generated eight random right-hand-sides\footnote{We generate random solutions, then apply the operator to get a RHS in the range of the operator.}, recorded the \texttt{GMRES} iteration counts, and report their average. For non-trivial topologies, where the system is only solvable up to harmonic forms, we terminate the iteration when the residual is sufficiently small, i.e.\ the solution may contain non-trivial harmonic forms.

The meshes used, together with the names of the meshes, can be found in \Cref{fig:mesh2d} for the 2D and \Cref{fig:mesh3d} for the 3D problems.

Furthermore, for the magnetostatics problem, we also tried only post-smoothening with a varying amount of steps.

\subsection{Results}
The numerical results are summarized in \Cref{fig:dirac_2d_bary,fig:dirac_2d_bary_w,fig:dirac_2d_mass,fig:dirac_2d_mass_w,fig:dirac_2d_scaledid,fig:dirac_2d_scaledid_w} for the Dirac operator, in \Cref{fig:laplace_2d_bary,fig:laplace_2d_bary_w,fig:laplace_2d_mass,fig:laplace_2d_mass_w,fig:laplace_2d_scaledid,fig:laplace_2d_scaledid_w} for the Hodge-Laplacian on 1-forms, and in \Cref{fig:mag_2d_bary,fig:mag_2d_bary_w,fig:mag_2d_mass,fig:mag_2d_mass_w,fig:mag_2d_scaledid,fig:mag_2d_scaledid_w} for the magnetostatics problem.

We analyze the $n_\textnormal{ev}$-th dominant eigenvalue of the error propagation matrix, where $n_\textnormal{ev}$ denotes the number of harmonic forms associated with the continuous problem. In the presence of non-trivial harmonic forms, one expects the error propagation matrix to admit non-convergent eigenmodes corresponding to these harmonic components. This expectation is confirmed by the numerical experiments: we observe eigenvectors whose eigenvalues have unit modulus. Furthermore, the number of such eigenvectors coincides with the rank deficiency of the matrix.

The methods exhibit robust performance in both two and three spatial dimensions, with a consistently improved behavior in the two-dimensional setting. Unsurprisingly, the W-cycle generally yields more robust convergence compared to the V-cycle.

Across all experiments, the different mass-lumping strategies do not seem to lead to any pronounced qualitative differences in the overall convergence behavior.

In both two and three dimensions, the method appears capable of handling non-trivial harmonic forms\footnote{With a caveat: recall that we did not impose that the solution be orthogonal on the harmonic forms.}.

For the magnetostatics problem, cf.~\Cref{fig:mag_2d_bary,fig:mag_2d_bary_w,fig:mag_2d_mass,fig:mag_2d_mass_w,fig:mag_2d_scaledid,fig:mag_2d_scaledid_w}, we see that one post-smoothening step appears to be sufficient for robust convergence. Also, an increased number of post-smoothing steps leads to noticeably improved convergence rates, which is unsurprising.

\begin{figure}[ht]
    \centering

    % 3 per row
    \begin{subfigure}[b]{0.32\textwidth}
        \centering
        \includegraphics[width=\textwidth]{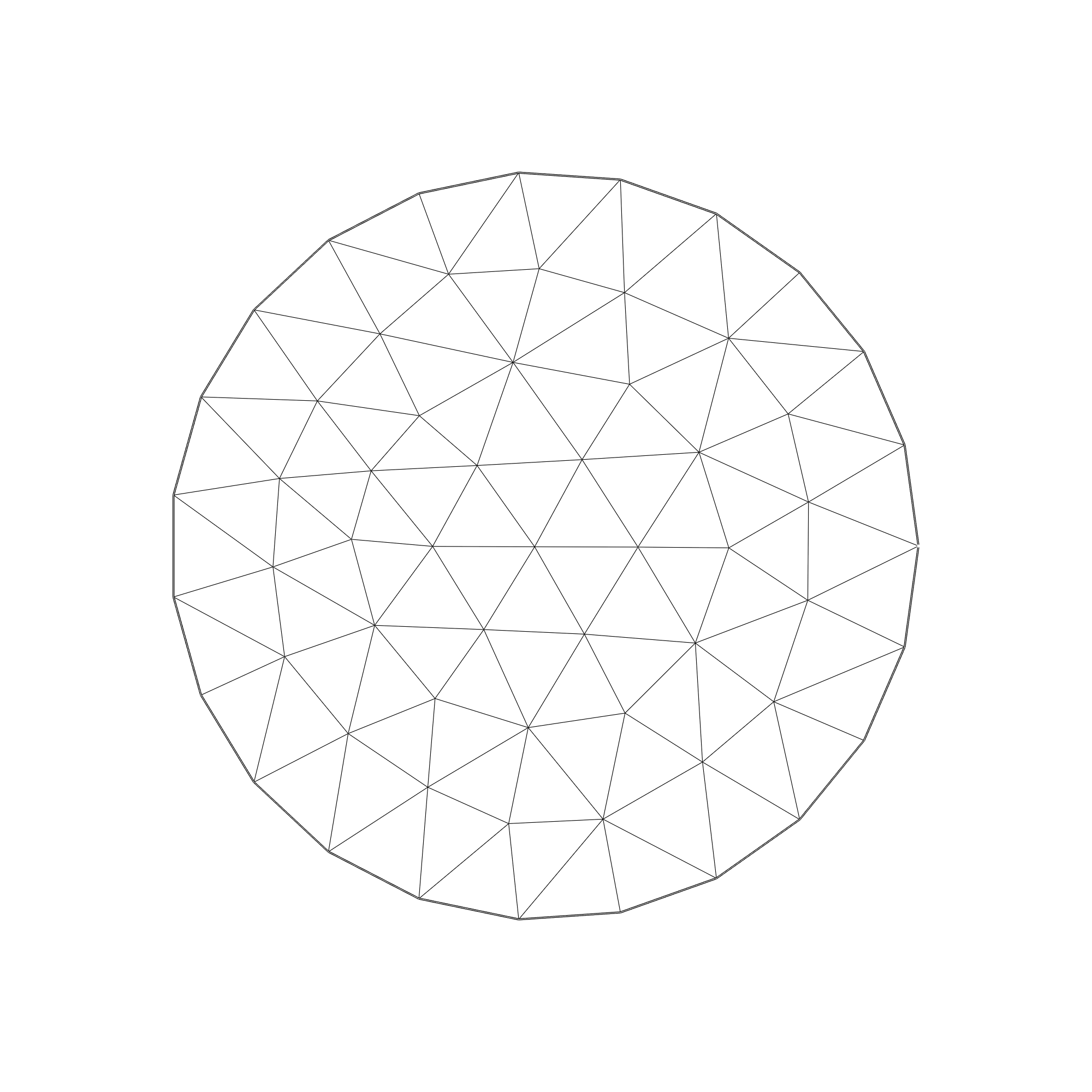}
        \caption{circle}
    \end{subfigure}\hfill
    \begin{subfigure}[b]{0.32\textwidth}
        \centering
        \includegraphics[width=\textwidth]{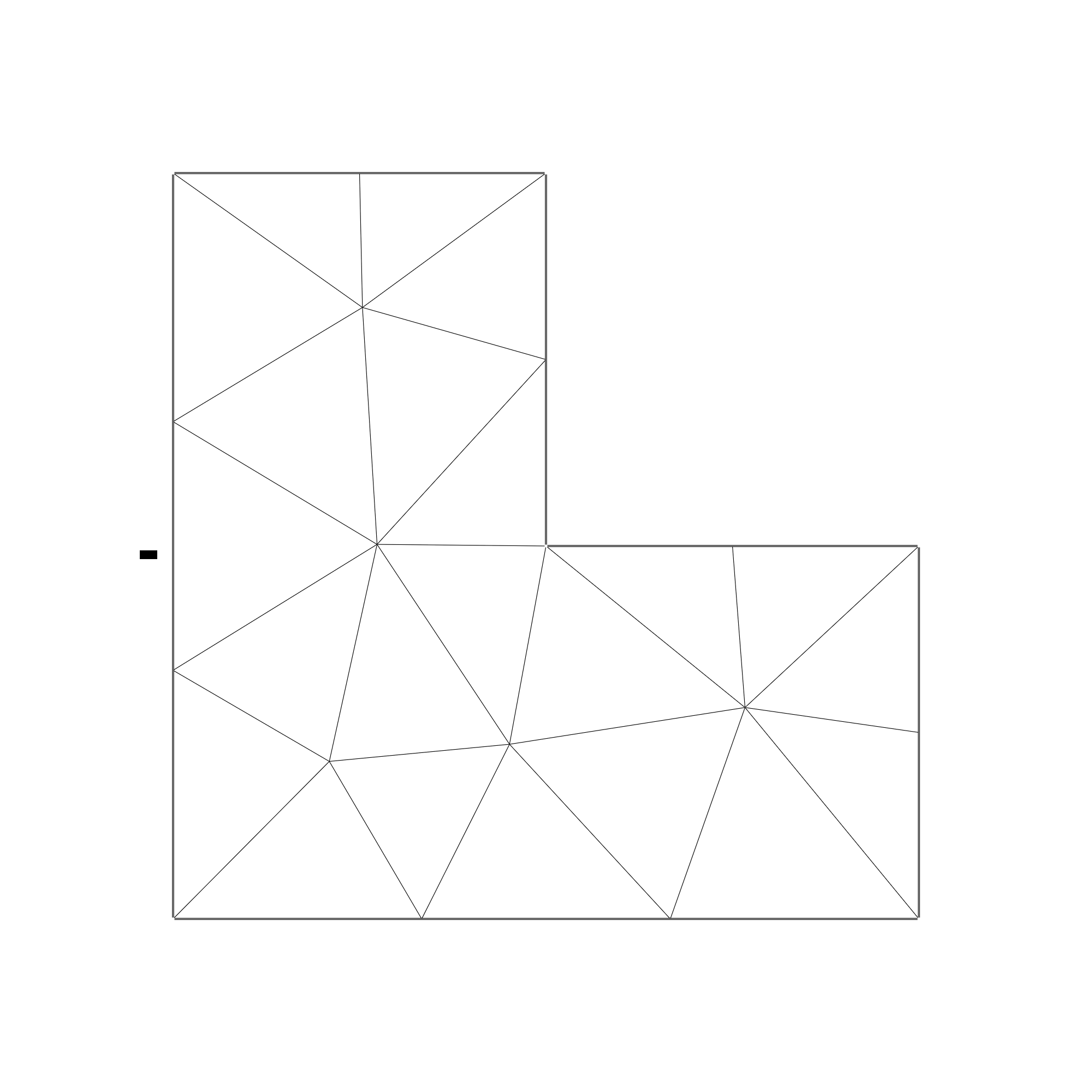}
        \caption{L}
    \end{subfigure}\hfill
    \begin{subfigure}[b]{0.32\textwidth}
        \centering
        \includegraphics[width=\textwidth]{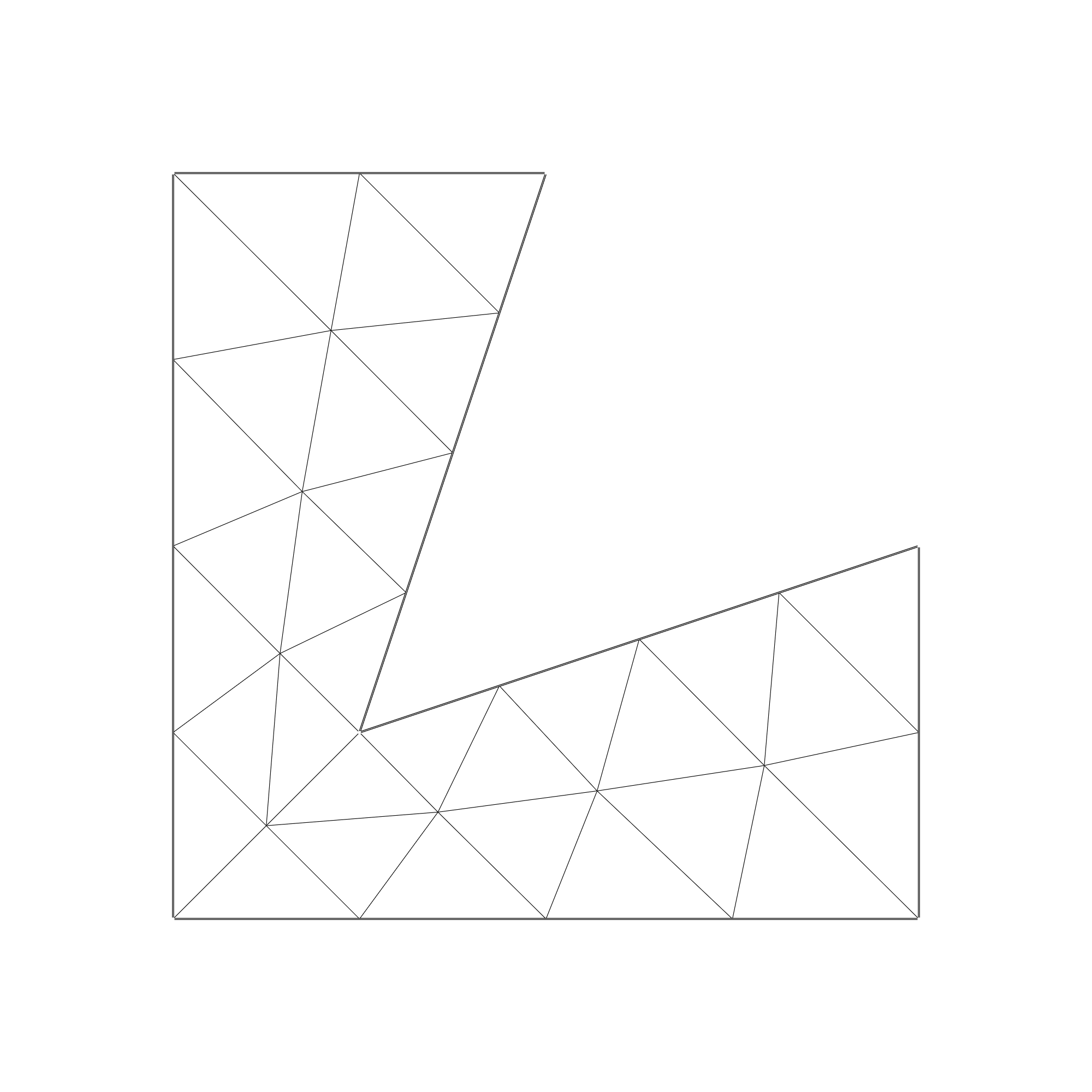}
        \caption{reentrant}
    \end{subfigure}

    \vspace{0.5em}
    \begin{subfigure}[b]{0.32\textwidth}
        \centering
        \includegraphics[width=\textwidth]{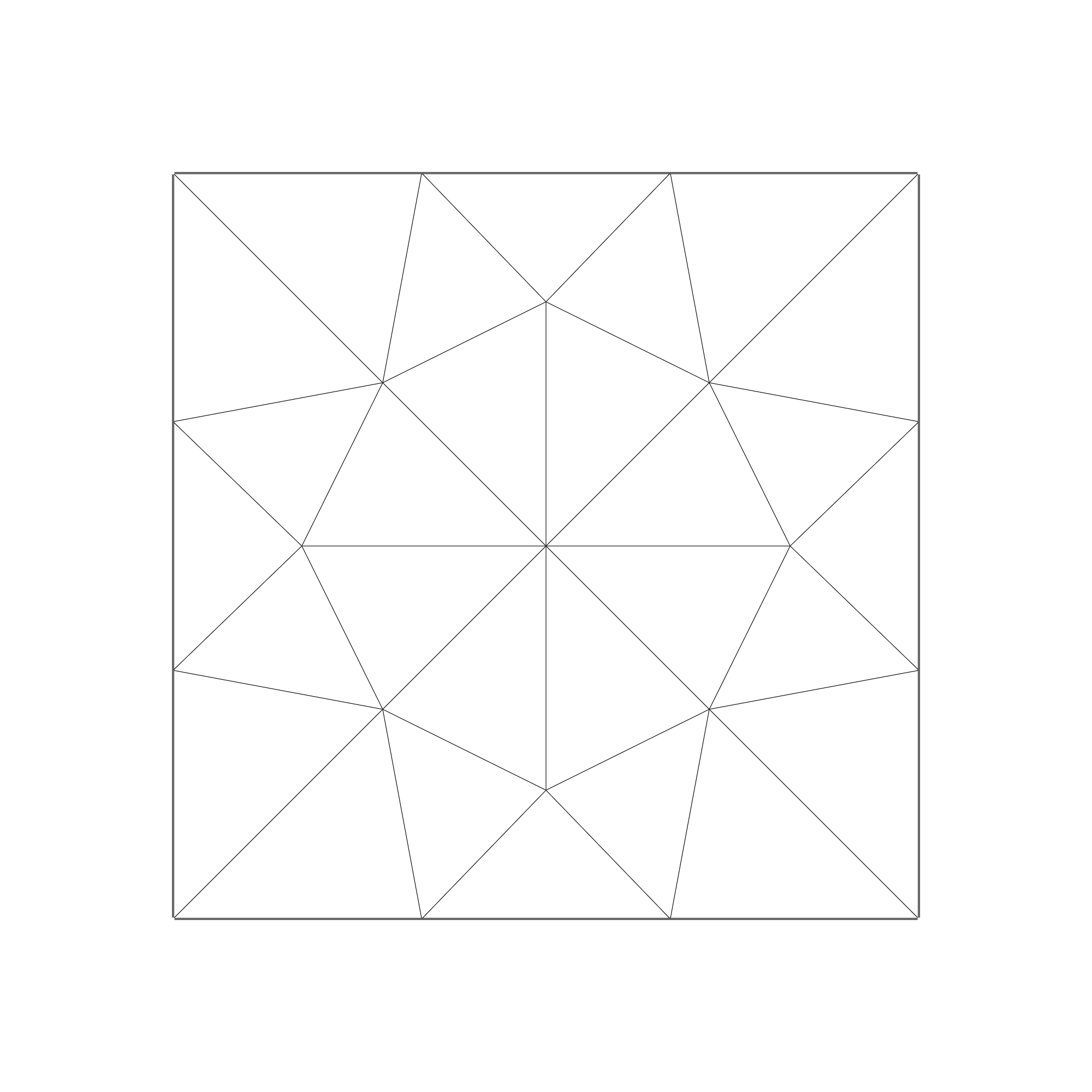}
        \caption{square\_centered}
    \end{subfigure}\hfill
    \begin{subfigure}[b]{0.32\textwidth}
        \centering
        \includegraphics[width=\textwidth]{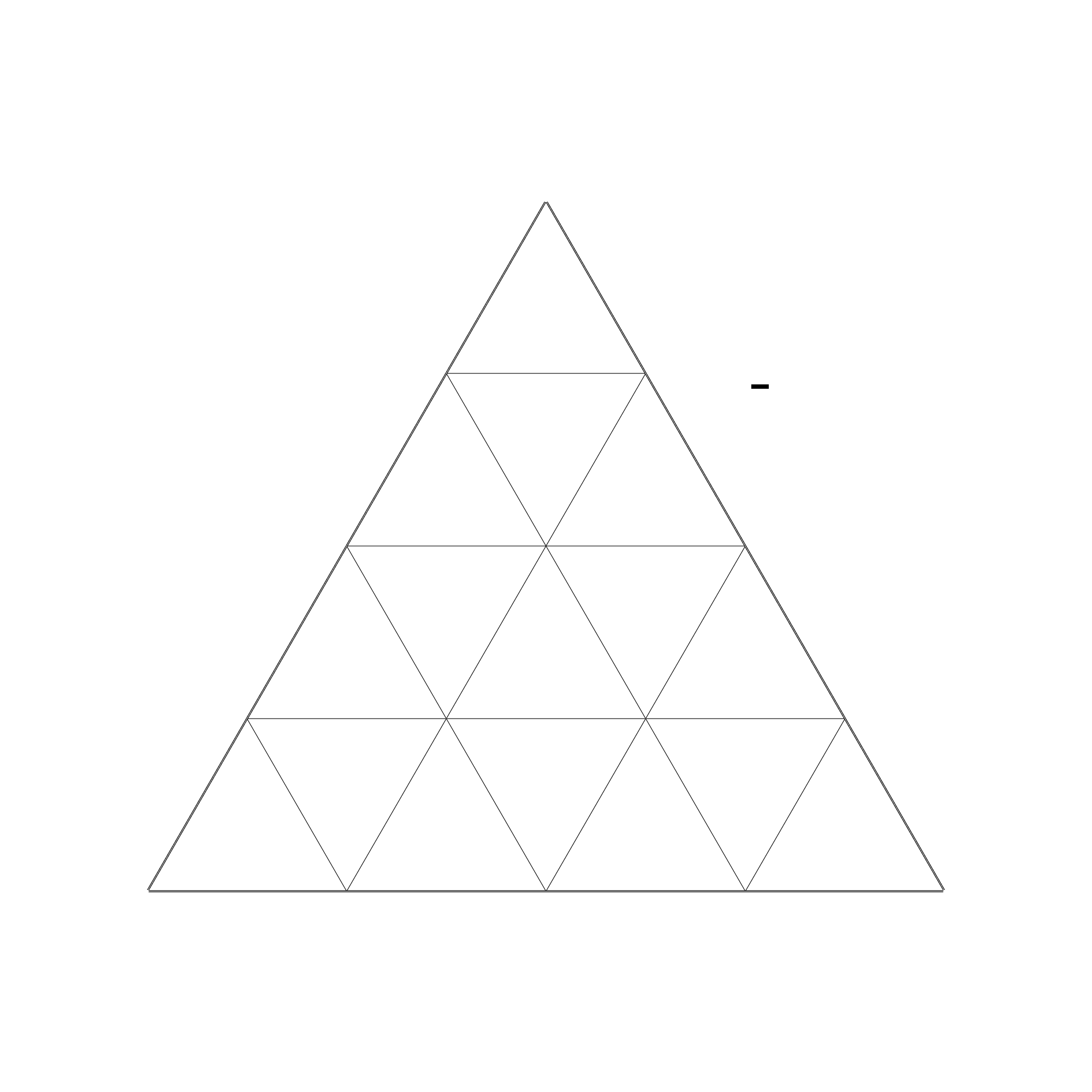}
        \caption{tria}
    \end{subfigure}\hfill
    \begin{subfigure}[b]{0.32\textwidth}
        \centering
        \includegraphics[width=\textwidth]{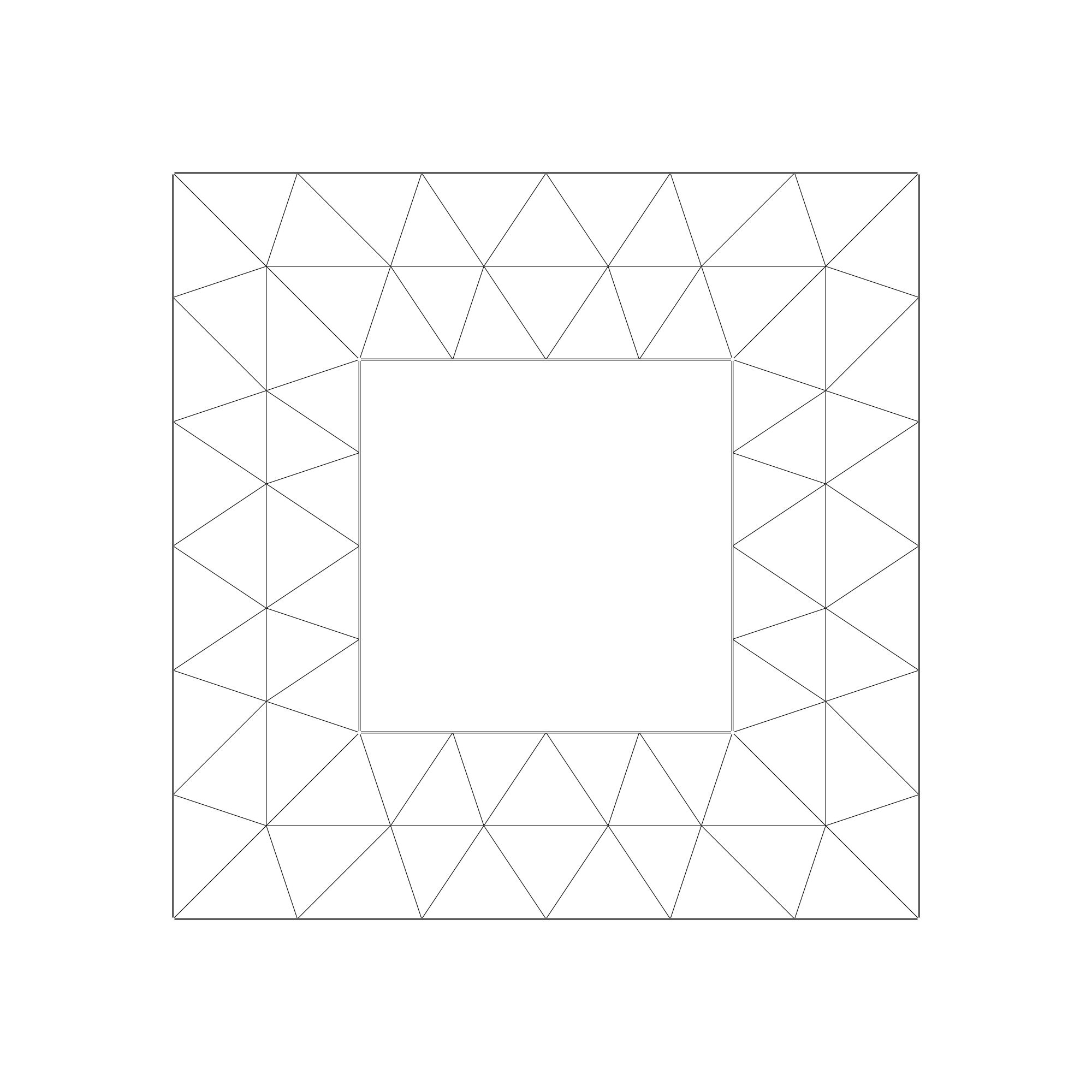}
        \caption{square\_hole}
    \end{subfigure}
\end{figure}
\begin{figure}[ht]
    \ContinuedFloat
    \centering

    \begin{subfigure}[b]{0.32\textwidth}
        \centering
        \includegraphics[width=\textwidth]{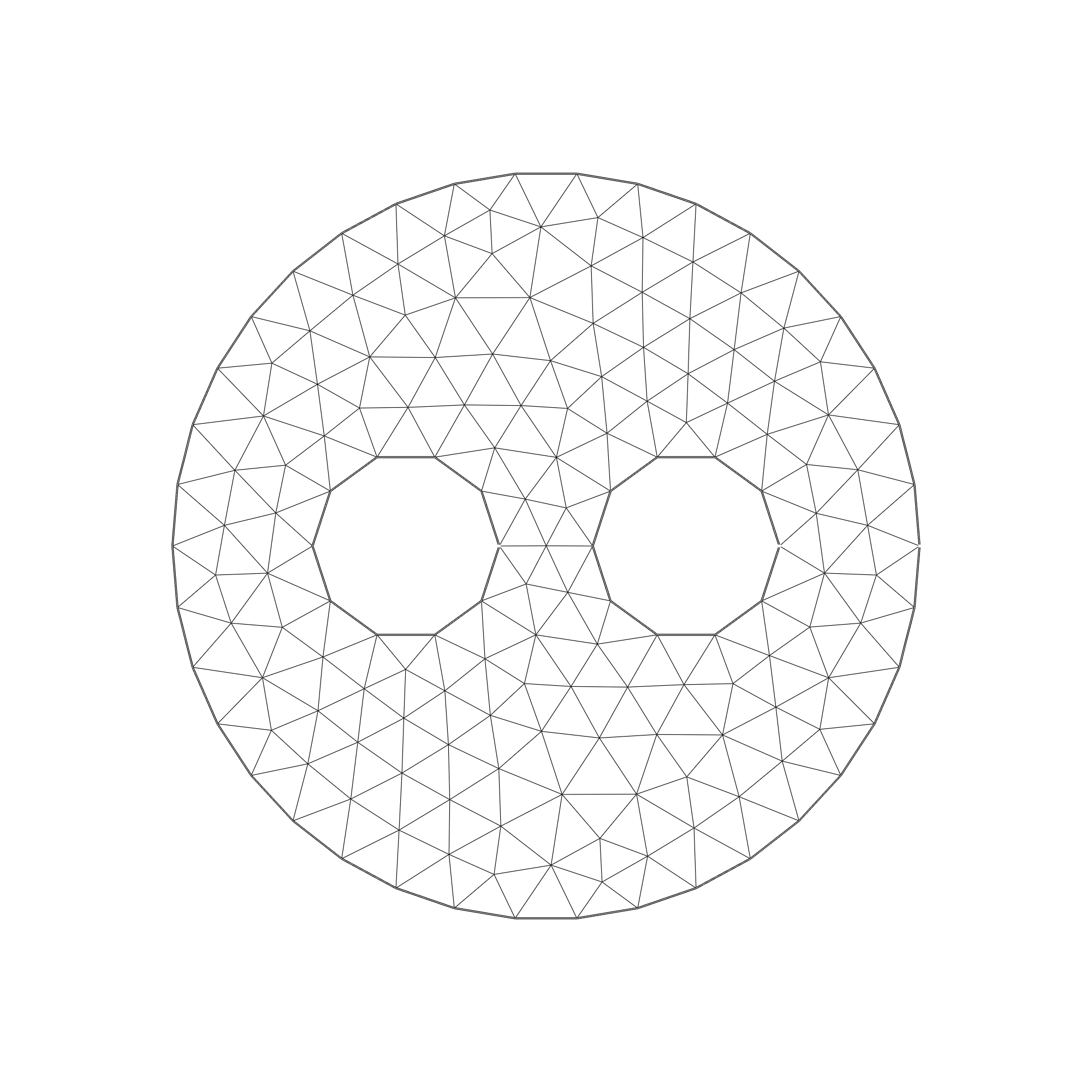}
        \caption{two\_holes}
    \end{subfigure}

    \caption{Coarse meshes used in the 2D tests.}
    \label{fig:mesh2d}
\end{figure}

\begin{figure}[ht]
    \centering

    % 3 per row (except hole_void)
    \begin{subfigure}[b]{0.32\textwidth}
        \centering
        \includegraphics[width=\textwidth]{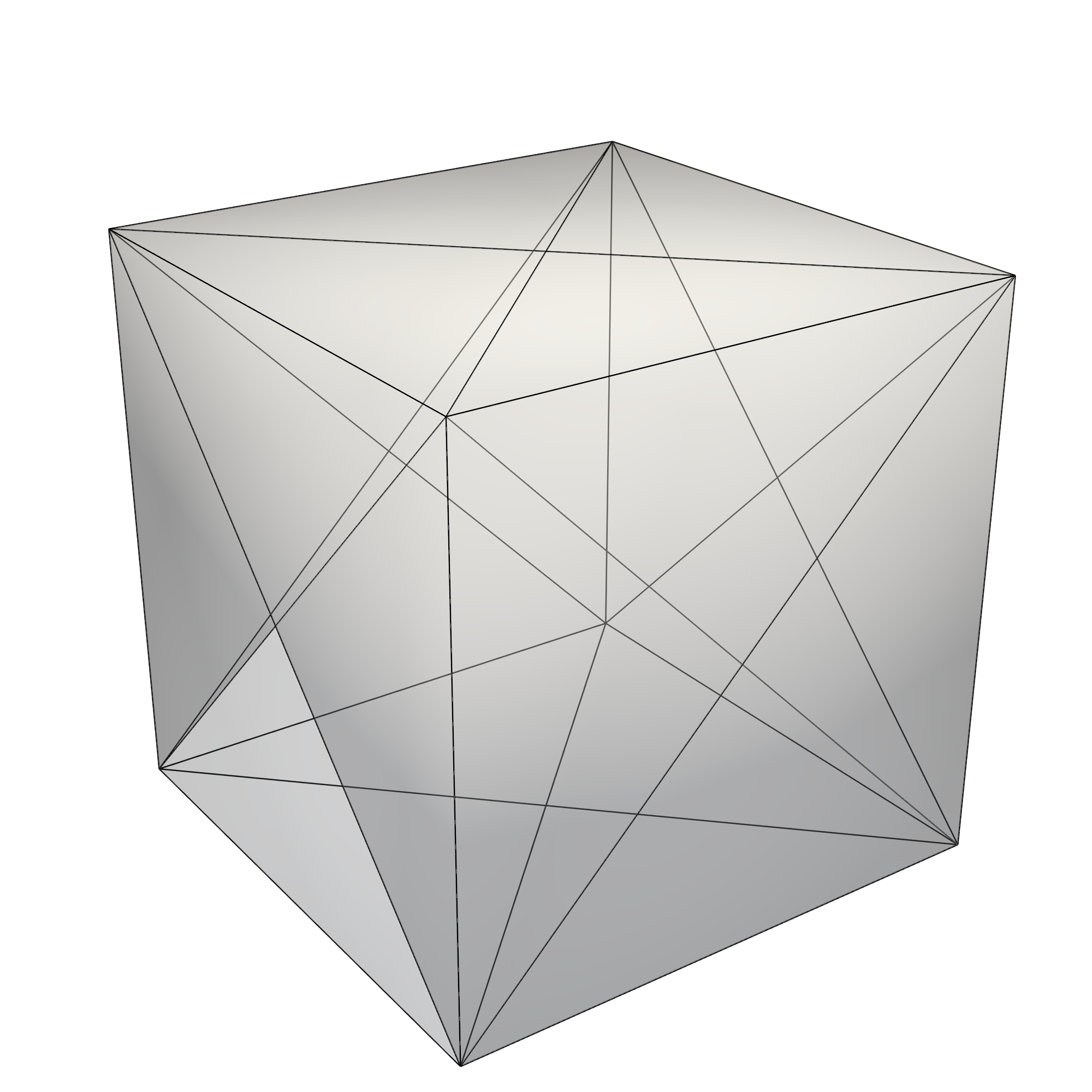}
        \caption{cube}
    \end{subfigure}\hfill
    \begin{subfigure}[b]{0.32\textwidth}
        \centering
        \includegraphics[width=\textwidth]{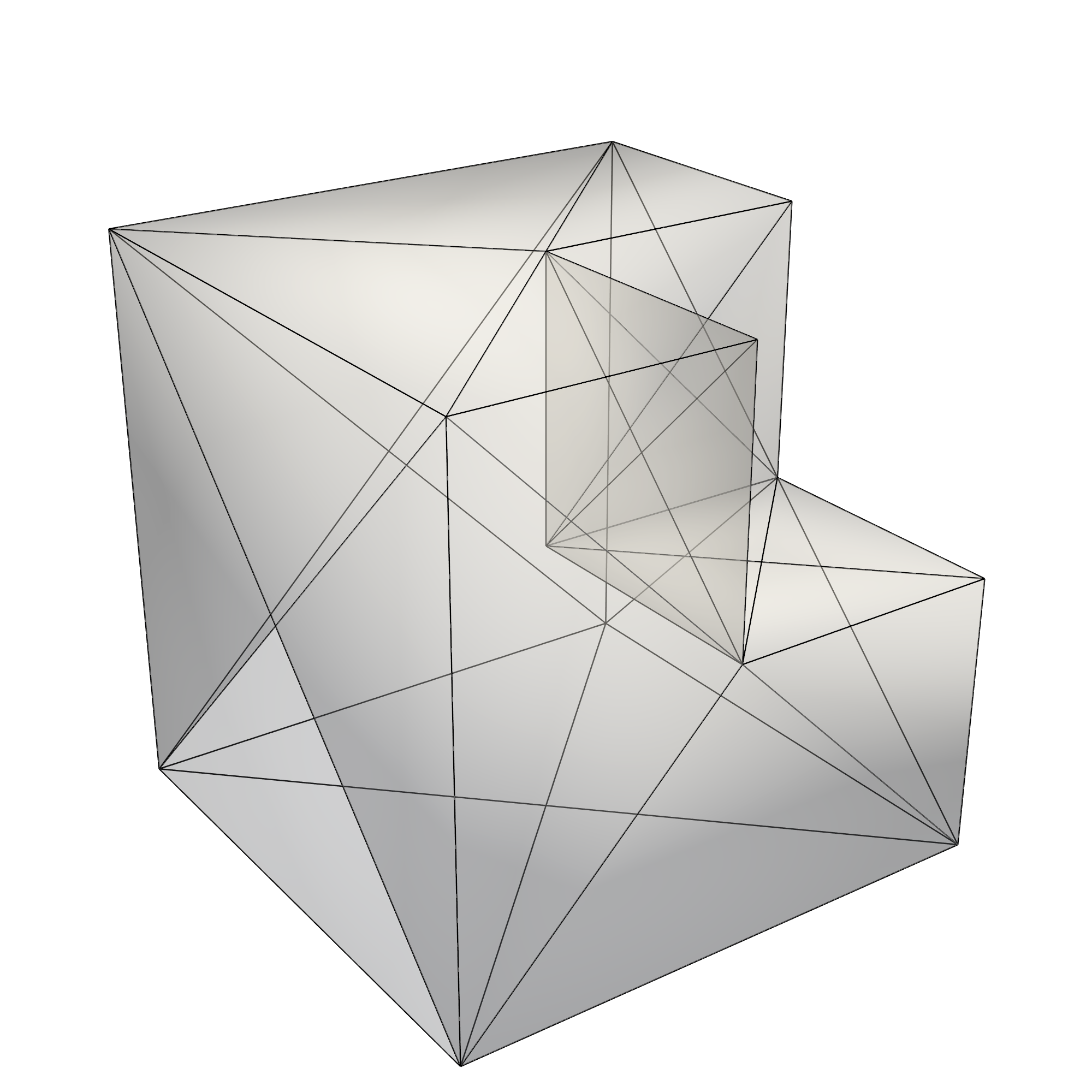}
        \caption{corner}
    \end{subfigure}\hfill
    \begin{subfigure}[b]{0.32\textwidth}
        \centering
        \includegraphics[width=\textwidth]{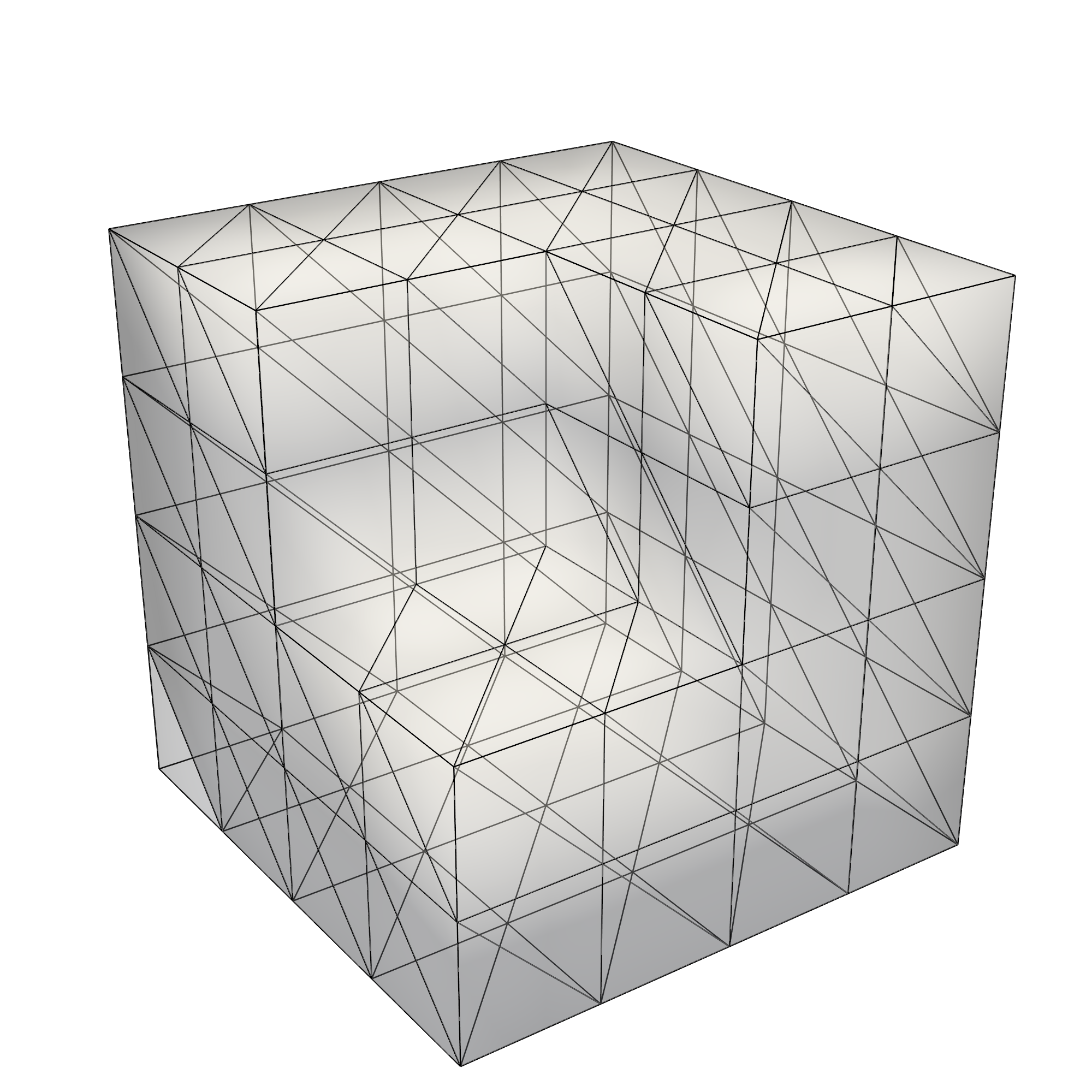}
        \caption{corner\_structured}
    \end{subfigure}

    \vspace{0.5em}
    % 2 on this row (fills the line nicer)
    \begin{subfigure}[b]{0.49\textwidth}
        \centering
        \includegraphics[width=\textwidth]{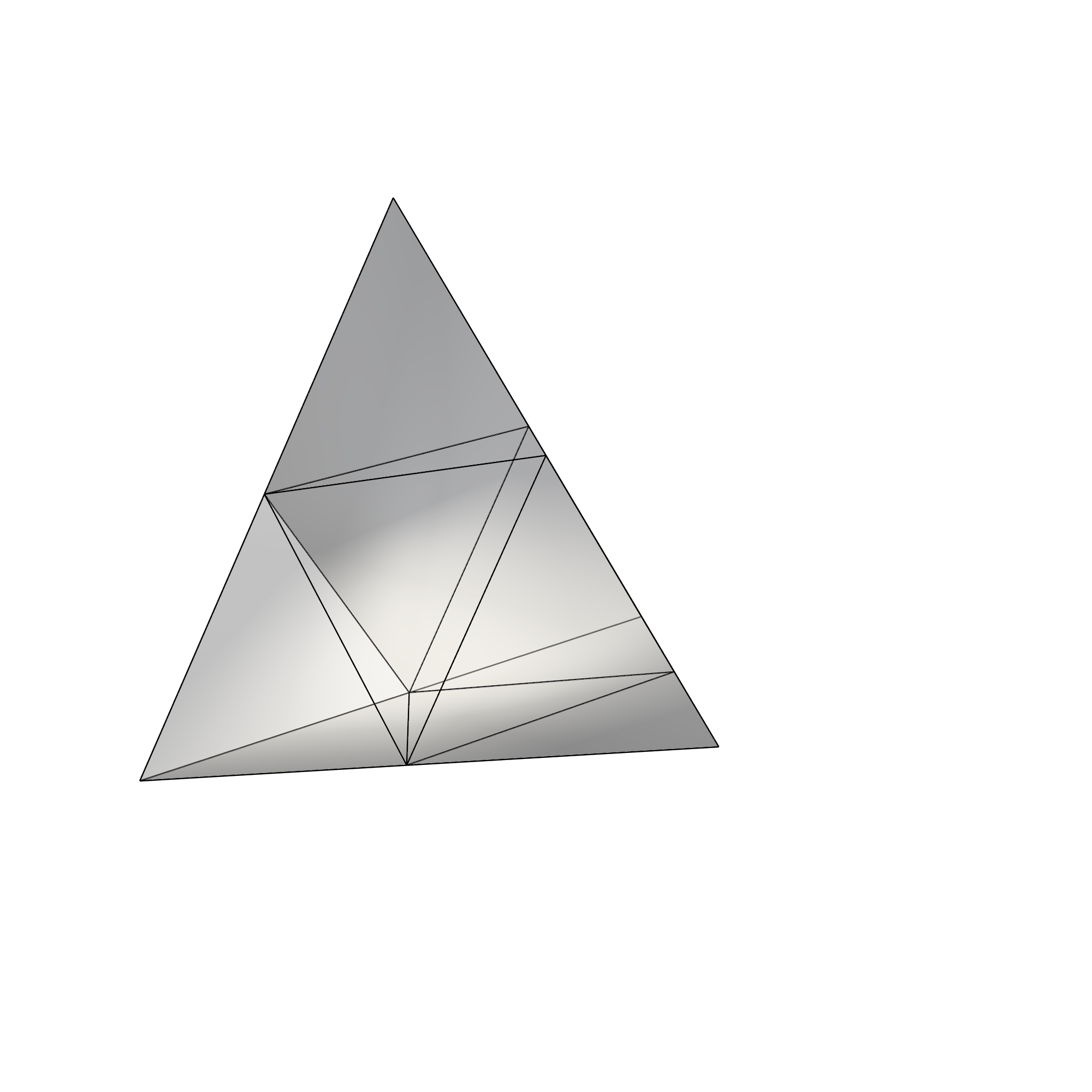}
        \caption{tetra}
    \end{subfigure}\hfill
    \begin{subfigure}[b]{0.49\textwidth}
        \centering
        \includegraphics[width=\textwidth]{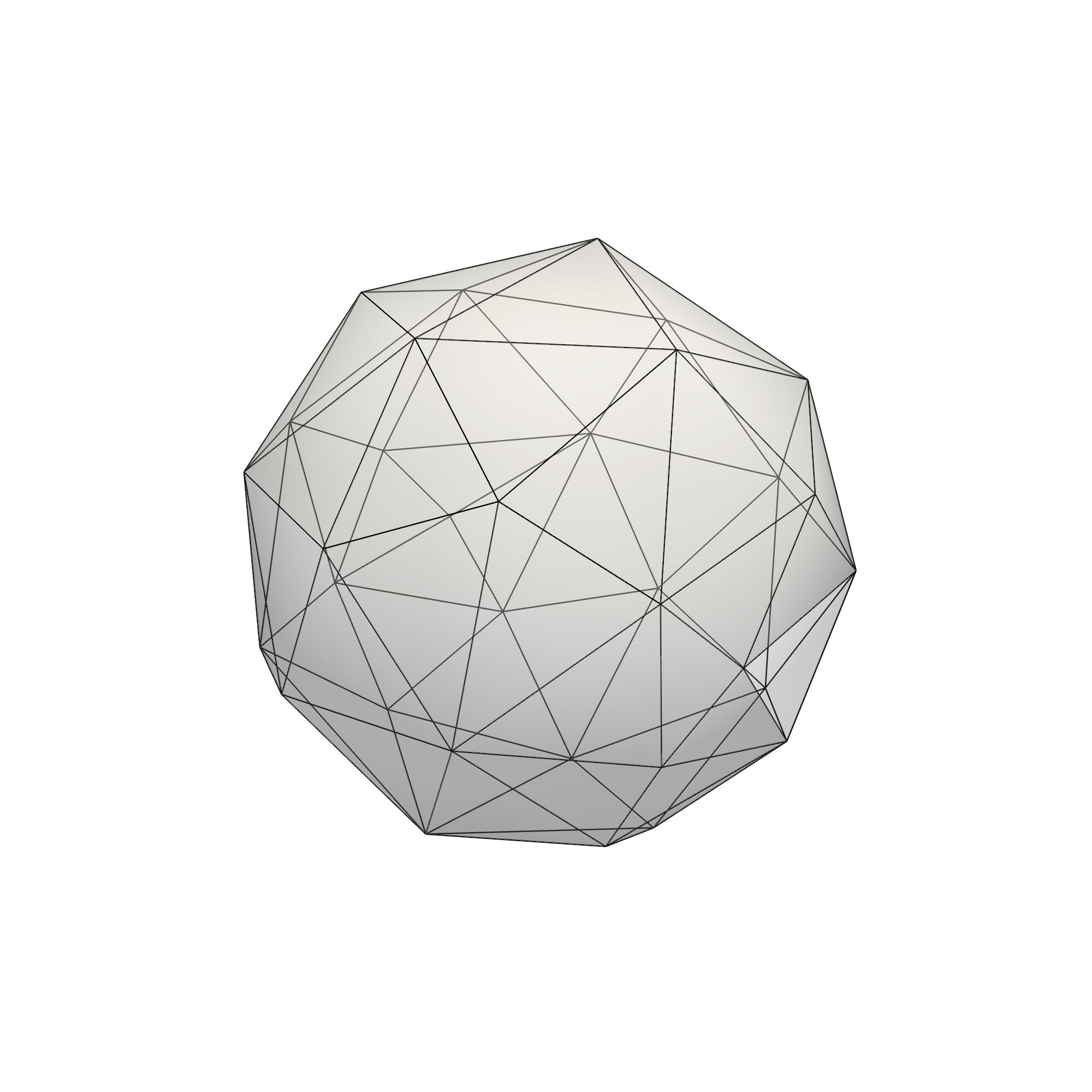}
        \caption{ball}
    \end{subfigure}

    \vspace{0.5em}
    \makebox[\textwidth][c]{%
        \begin{subfigure}[b]{0.49\textwidth}
            \centering
            \includegraphics[width=\textwidth]{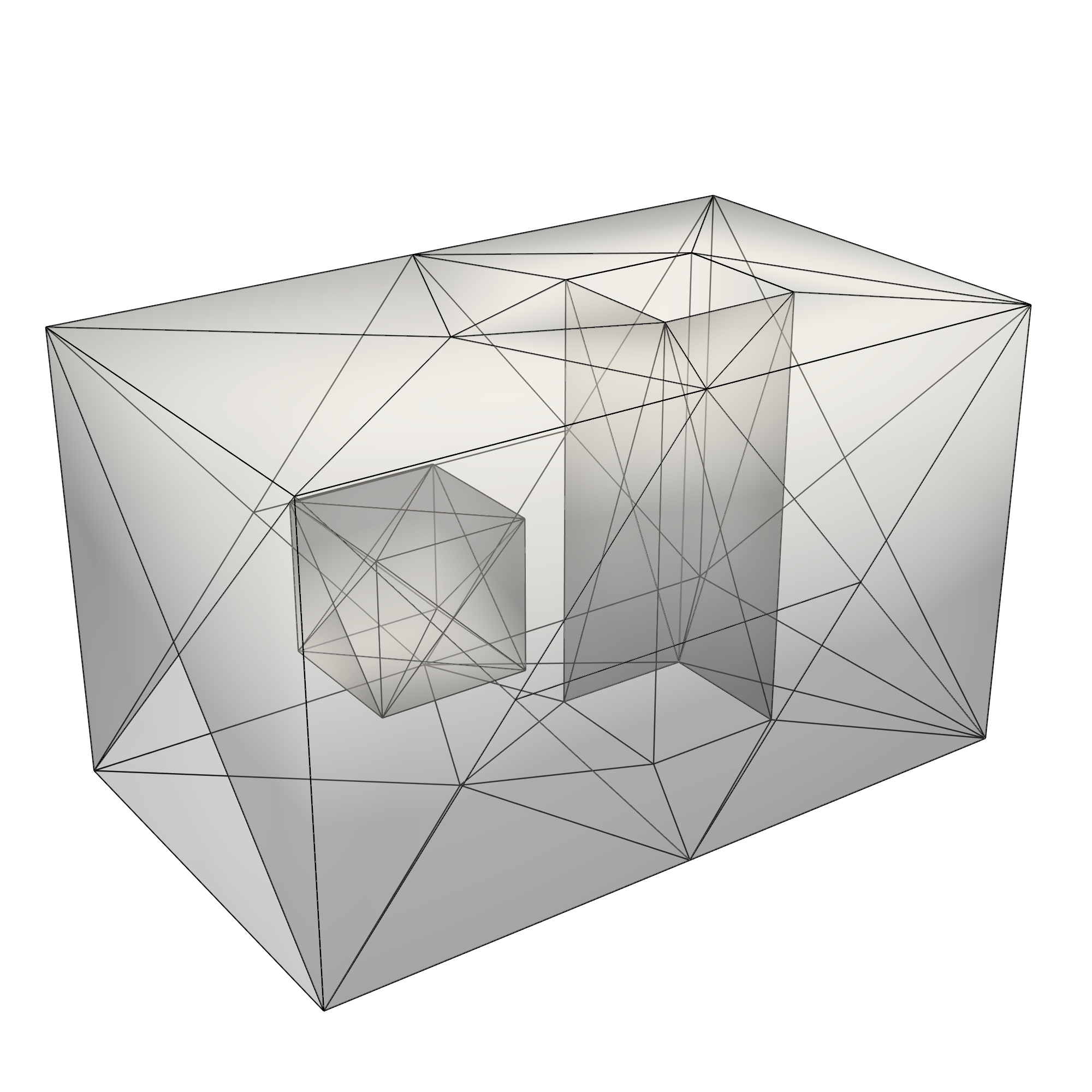}
            \caption{hole\_void}
        \end{subfigure}%
    }

    \caption{Coarse meshes used in the 3D tests.}
    \label{fig:mesh3d}
\end{figure}

\begin{figure}[p]
    \centerfloat
    \centering
    \fullplot{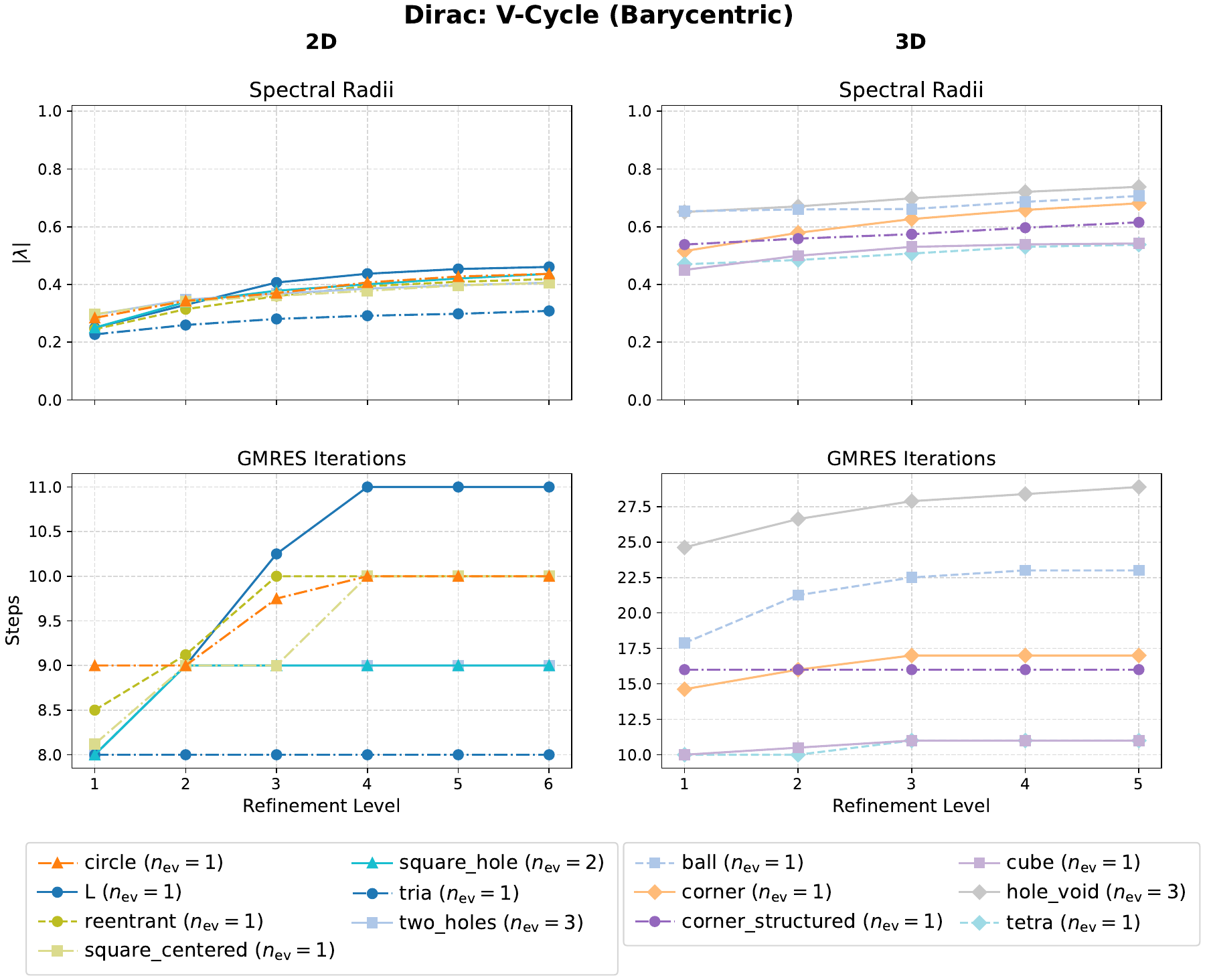}

    \caption{Convergence of the multigrid method for the Dirac operator in 2D and 3D with barycentric duals (V-cycle).}
    \label{fig:dirac_2d_bary}
\end{figure}

\begin{figure}[p]
    \centerfloat
    \centering
    \fullplot{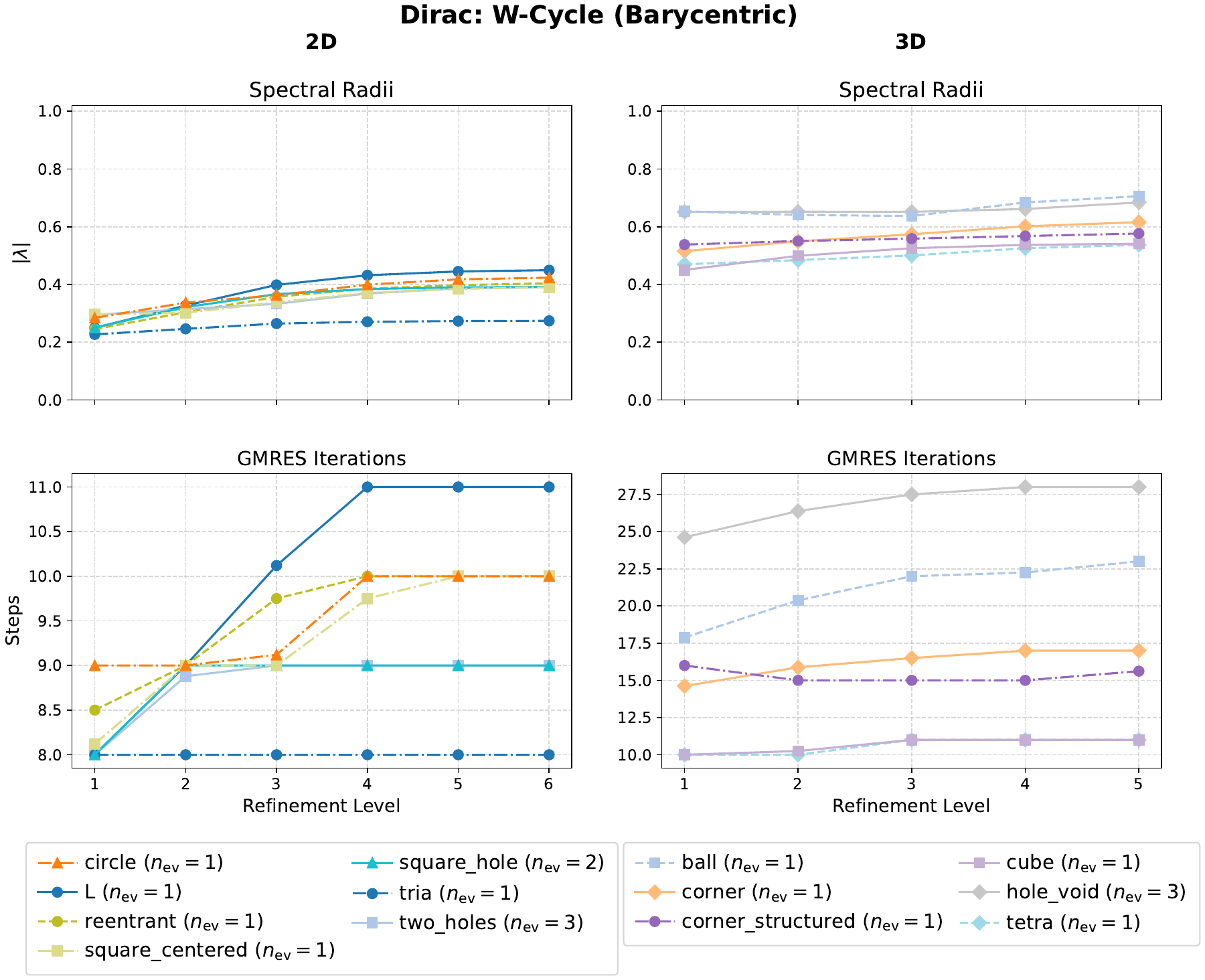}

    \caption{Convergence of the multigrid method for the Dirac operator in 2D and 3D with barycentric duals (W-cycle).}
    \label{fig:dirac_2d_bary_w}
\end{figure}

\begin{figure}[p]
    \centerfloat
    \centering
    \fullplot{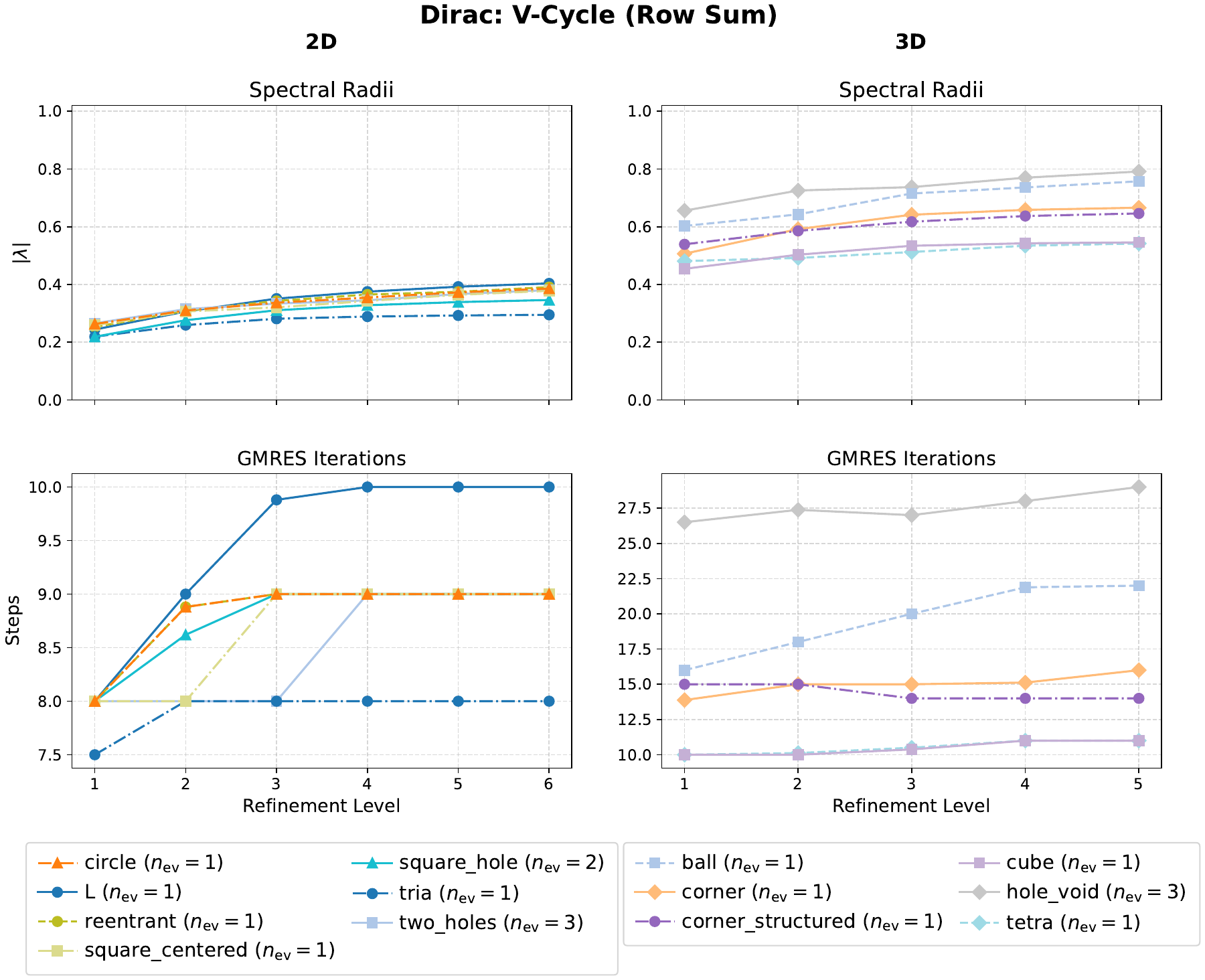}

    \caption{Convergence of the multigrid method for the Dirac operator in 2D and 3D with row-sum mass-lumping (V-cycle).}
    \label{fig:dirac_2d_mass}
\end{figure}

\begin{figure}[p]
    \centerfloat
    \centering
    \fullplot{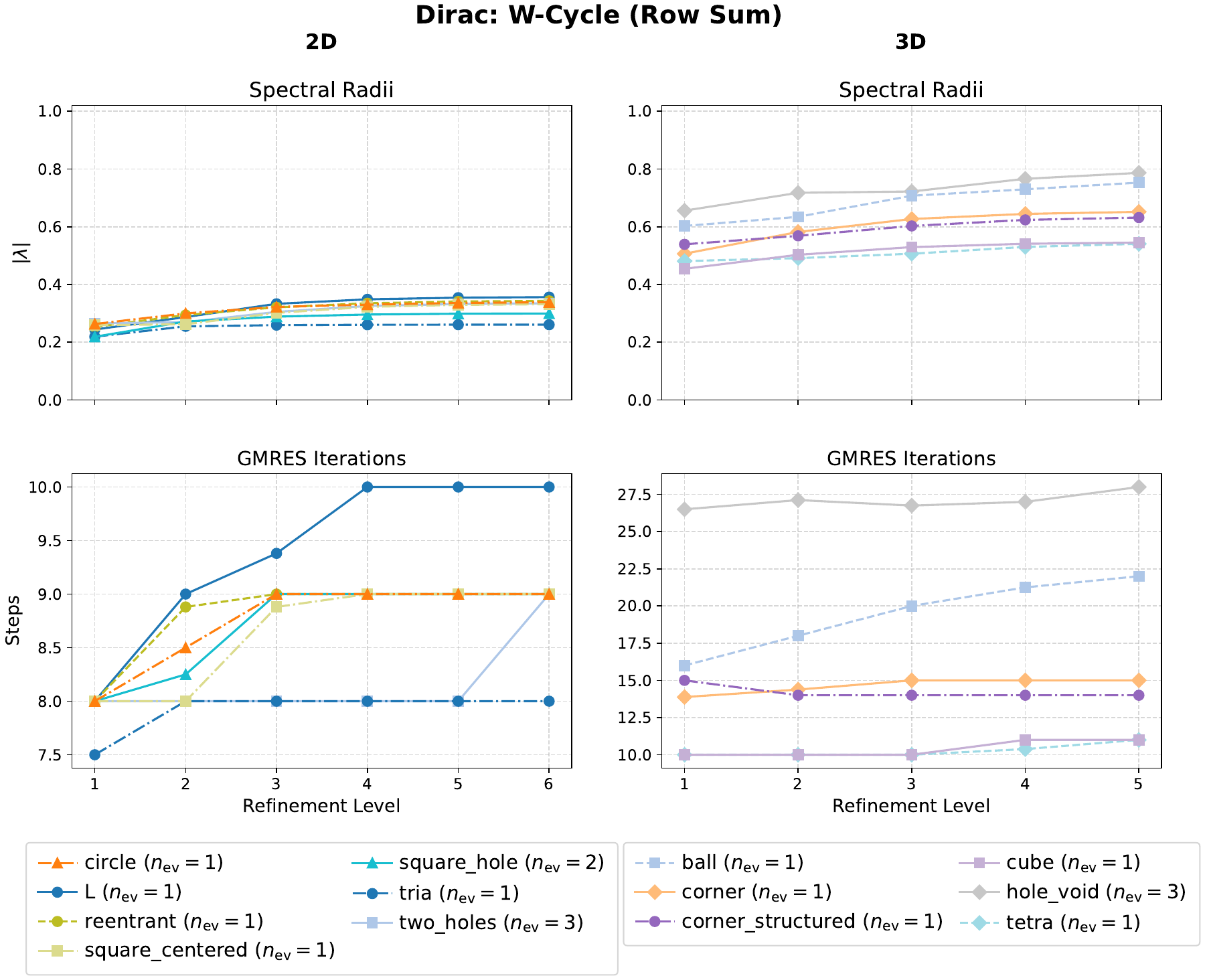}

    \caption{Convergence of the multigrid method for the Dirac operator in 2D and 3D with row-sum mass-lumping (W-cycle).}
    \label{fig:dirac_2d_mass_w}
\end{figure}

\begin{figure}[p]
    \centerfloat
    \centering
    \fullplot{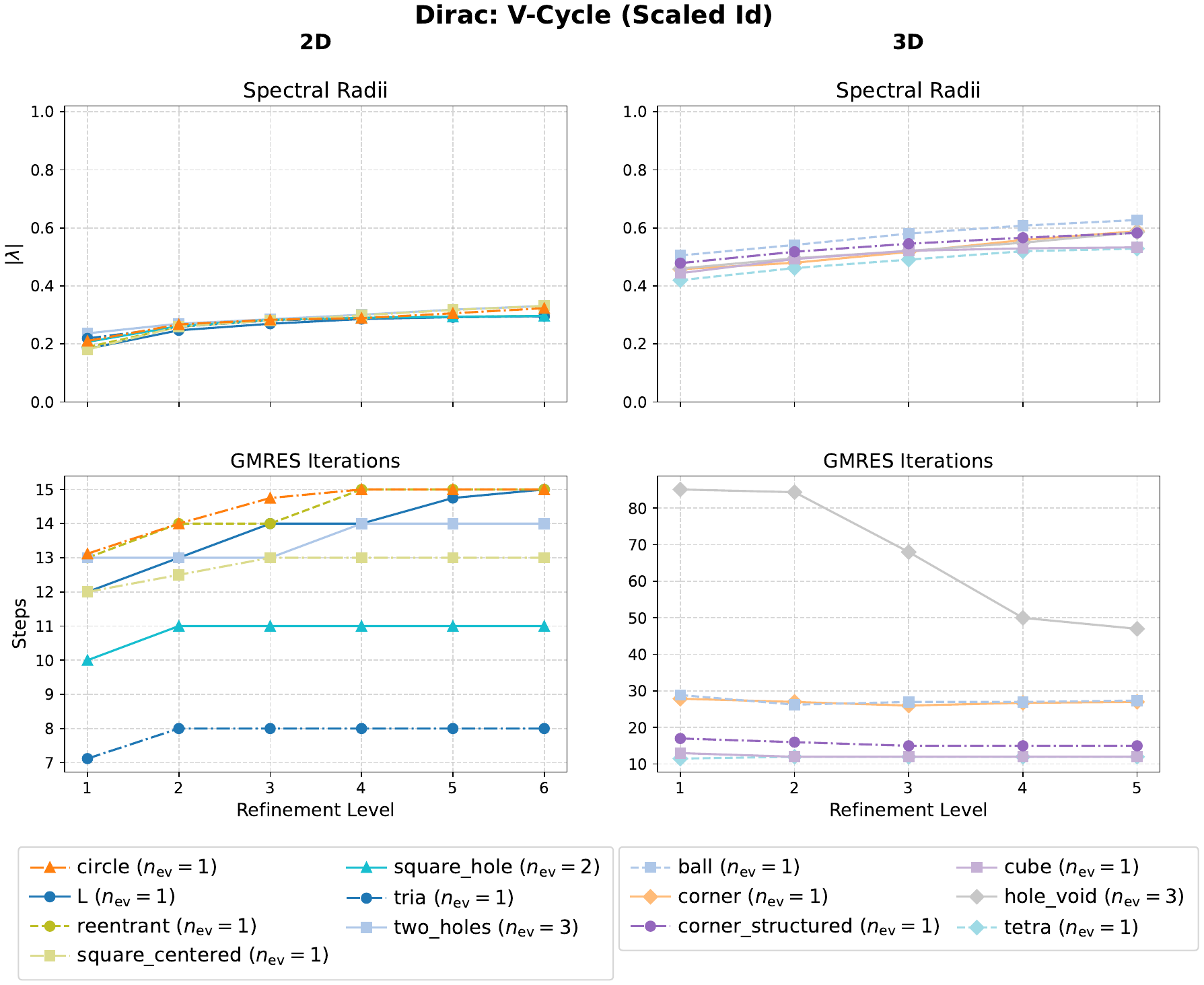}

    \caption{Convergence of the multigrid method for the Dirac operator in 2D and 3D with the scaled identity as the mass-lumping (V-cycle).}
    \label{fig:dirac_2d_scaledid}
\end{figure}

\begin{figure}[p]
    \centerfloat
    \centering
    \fullplot{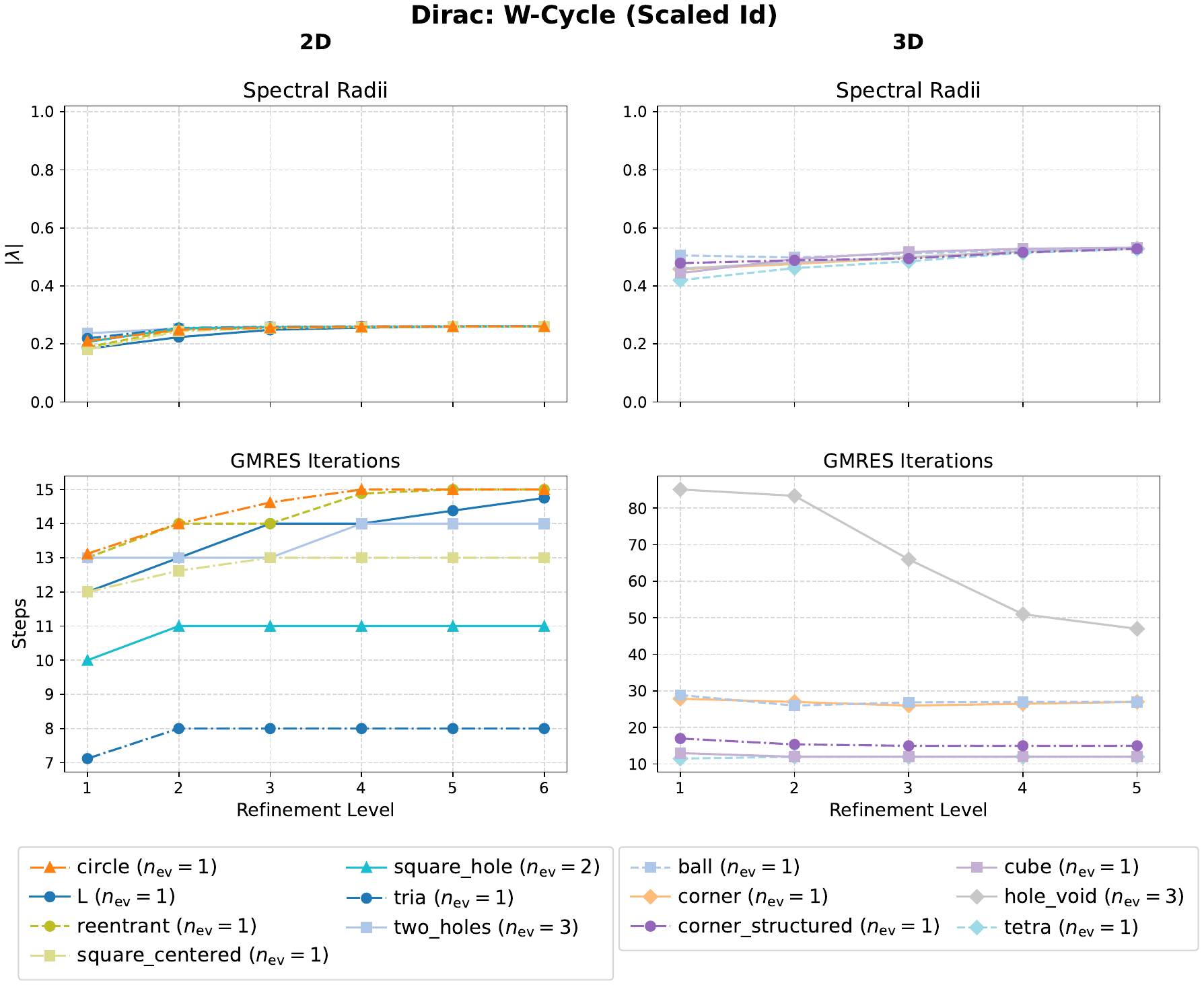}

    \caption{Convergence of the multigrid method for the Dirac operator in 2D and 3D with the scaled identity as the mass-lumping (W-cycle).}
    \label{fig:dirac_2d_scaledid_w}
\end{figure}

\begin{figure}[p]
    \centerfloat
    \centering
    \fullplot{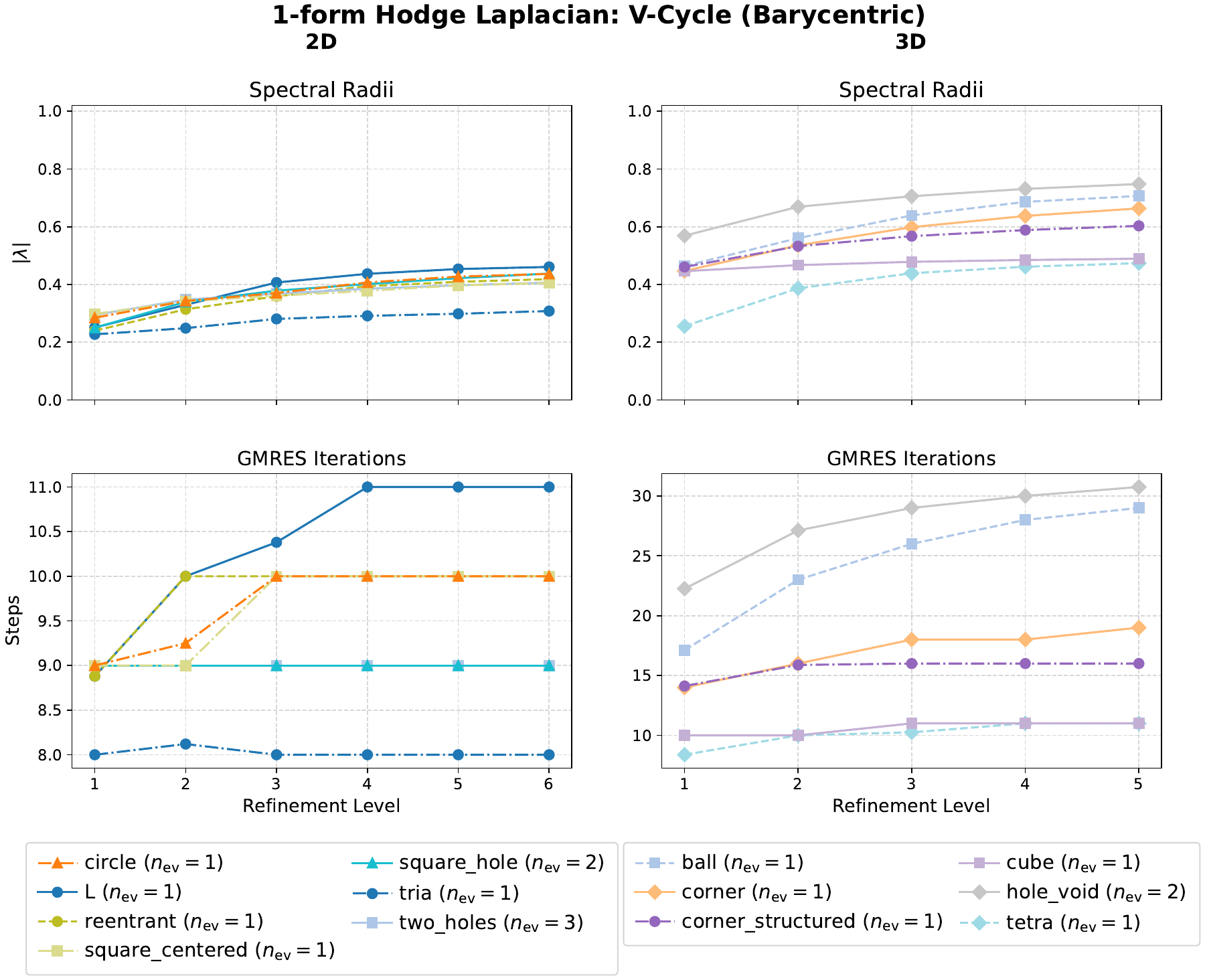}

    \caption{Convergence of the multigrid method for the Hodge-Laplacian on 1-forms in 2D and 3D with barycentric duals (V-cycle).}
    \label{fig:laplace_2d_bary}
\end{figure}

\begin{figure}[p]
    \centerfloat
    \centering
    \fullplot{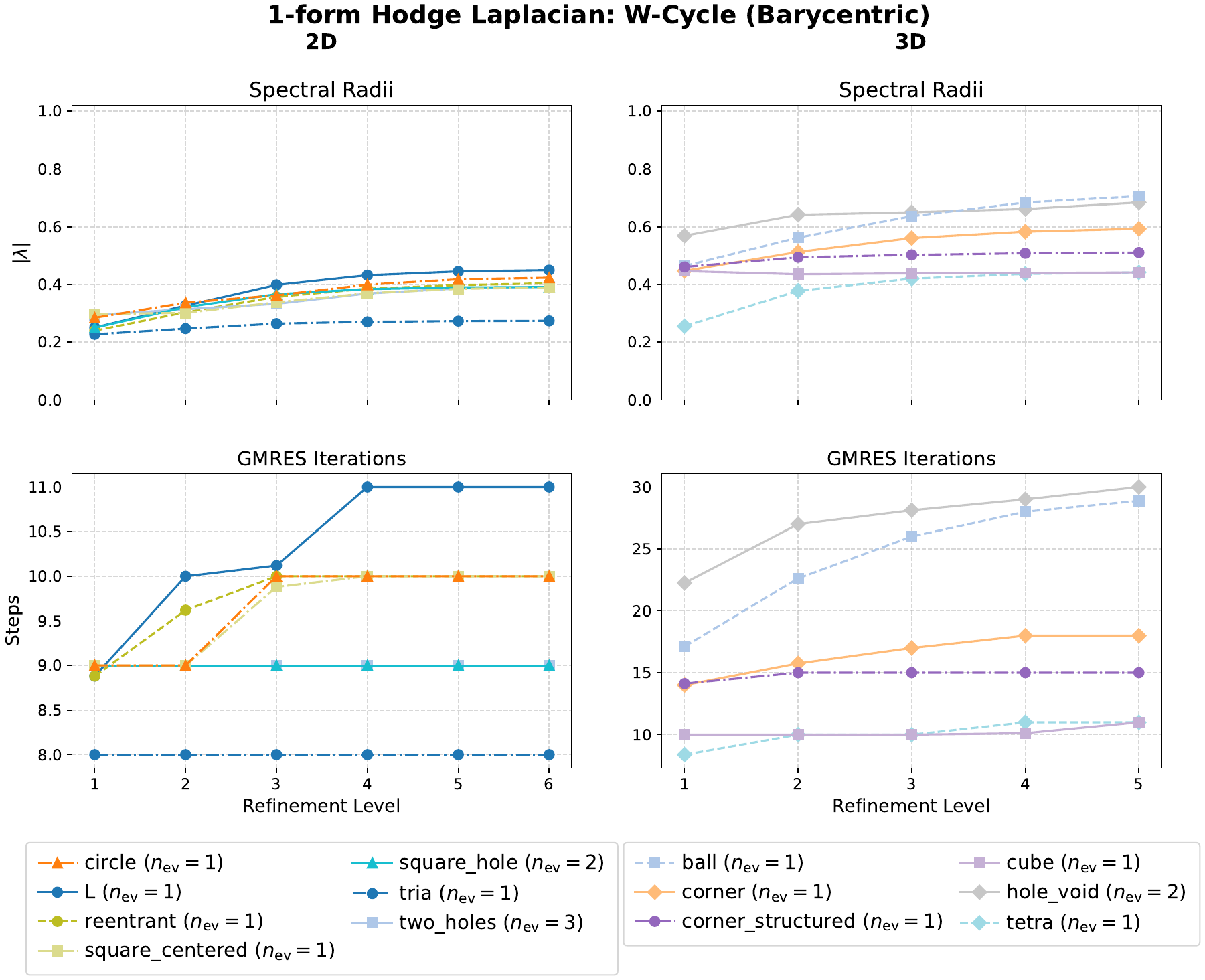}

    \caption{Convergence of the multigrid method for the Hodge-Laplacian on 1-forms in 2D and 3D with barycentric duals (W-cycle).}
    \label{fig:laplace_2d_bary_w}
\end{figure}

\begin{figure}[p]
    \centerfloat
    \centering
    \fullplot{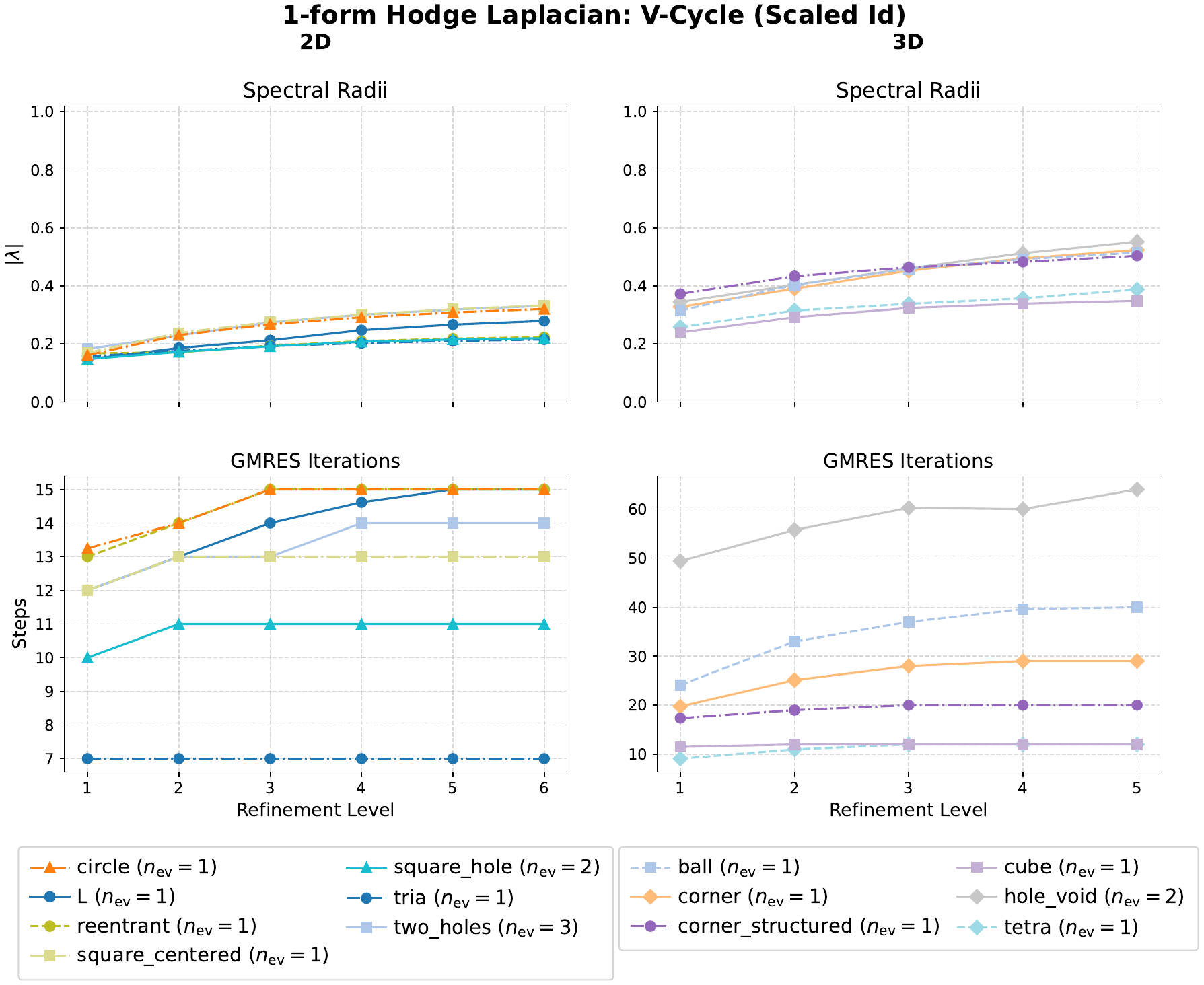}

    \caption{Convergence of the multigrid method for the Hodge-Laplacian on 1-forms in 2D and 3D with the scaled identity as the mass-lumping (V-cycle).}
    \label{fig:laplace_2d_scaledid}
\end{figure}

\begin{figure}[p]
    \centerfloat
    \centering
    \fullplot{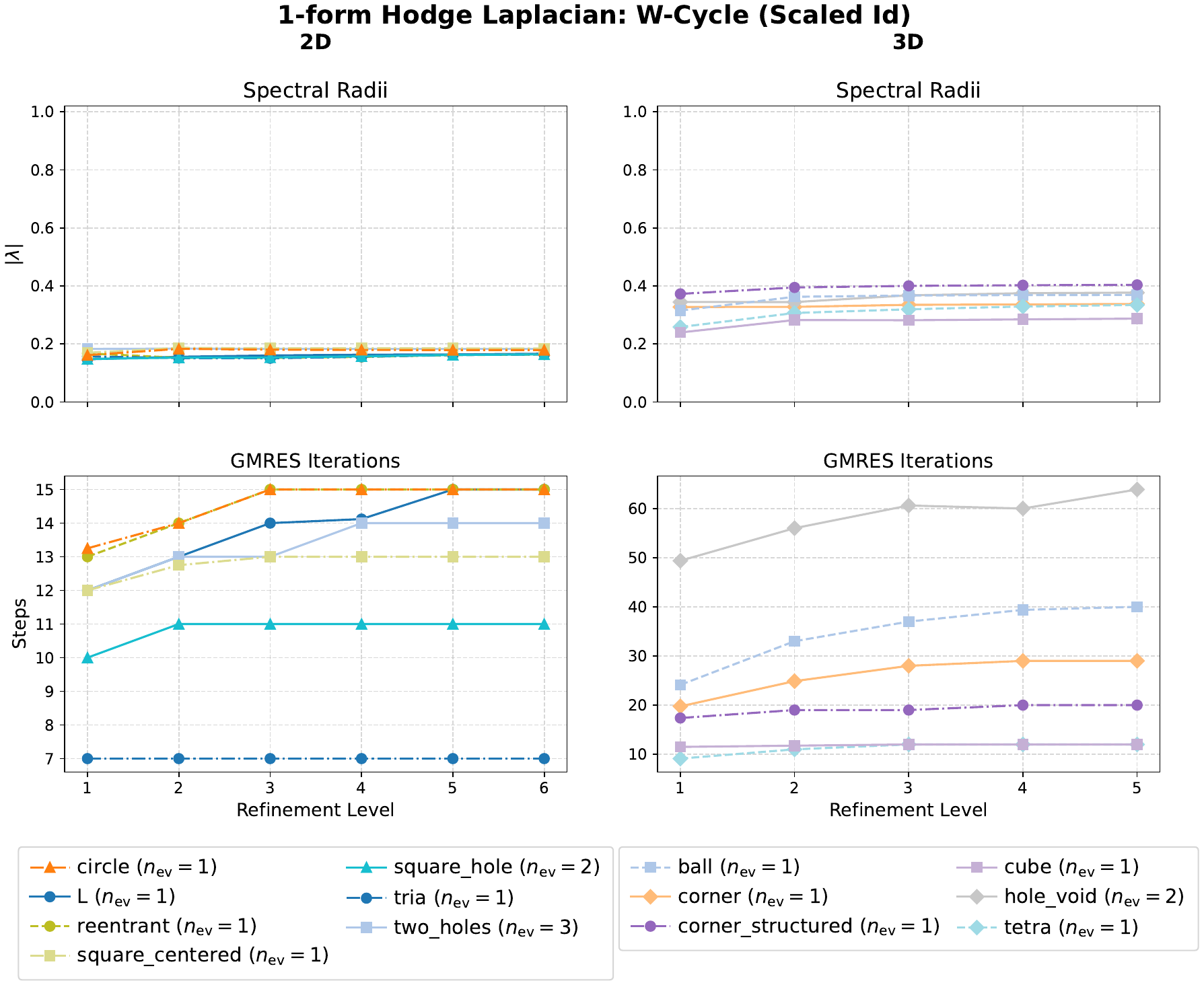}

    \caption{Convergence of the multigrid method for the Hodge-Laplacian on 1-forms in 2D and 3D with the scaled identity as the mass-lumping (W-cycle).}
    \label{fig:laplace_2d_scaledid_w}
\end{figure}

\begin{figure}[p]
    \centerfloat
    \centering
    \fullplot{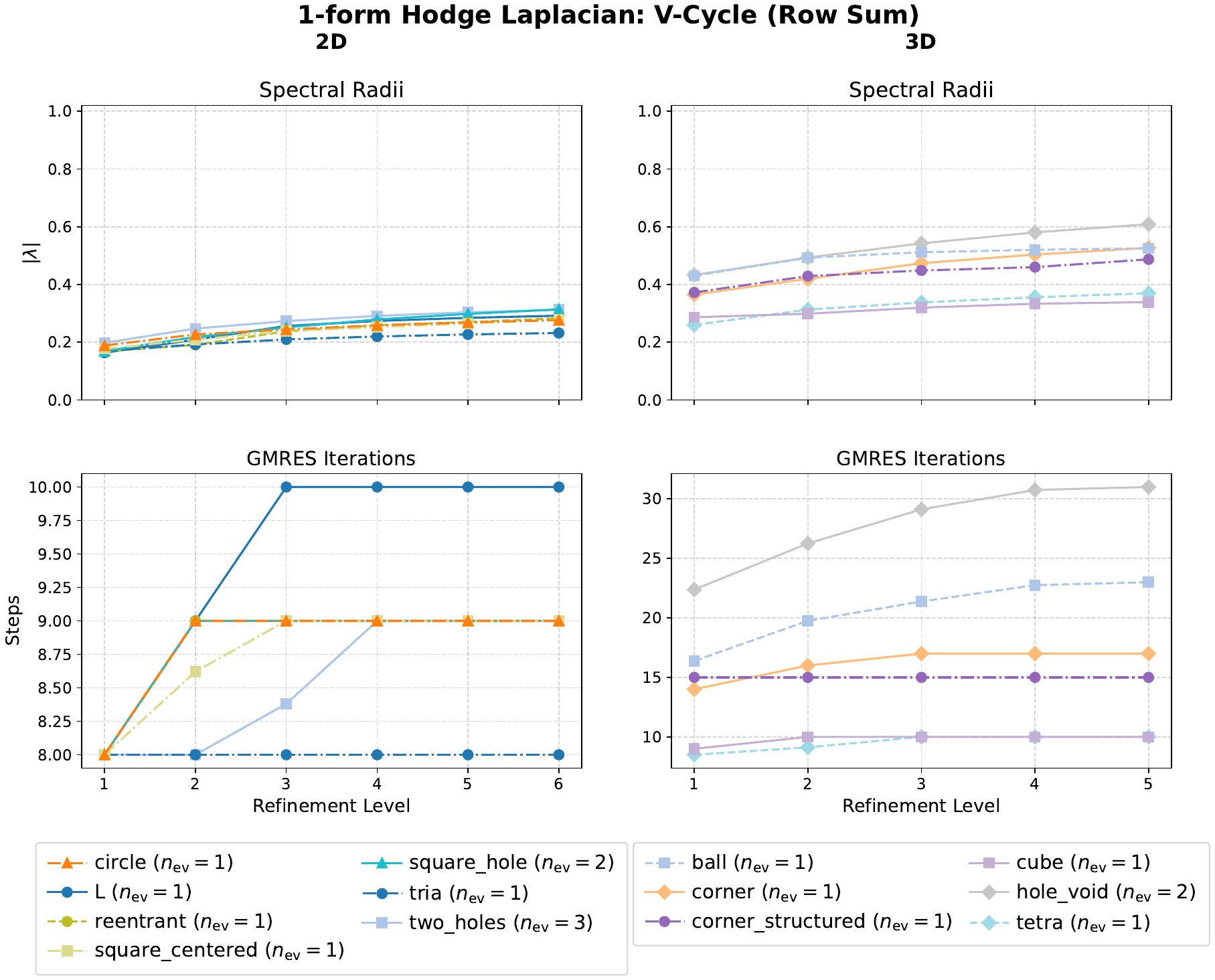}

    \caption{Convergence of the multigrid method for the Hodge-Laplacian on 1-forms in 2D and 3D with row-sum mass-lumping (V-cycle).}
    \label{fig:laplace_2d_mass}
\end{figure}

\begin{figure}[p]
    \centerfloat
    \centering
    \fullplot{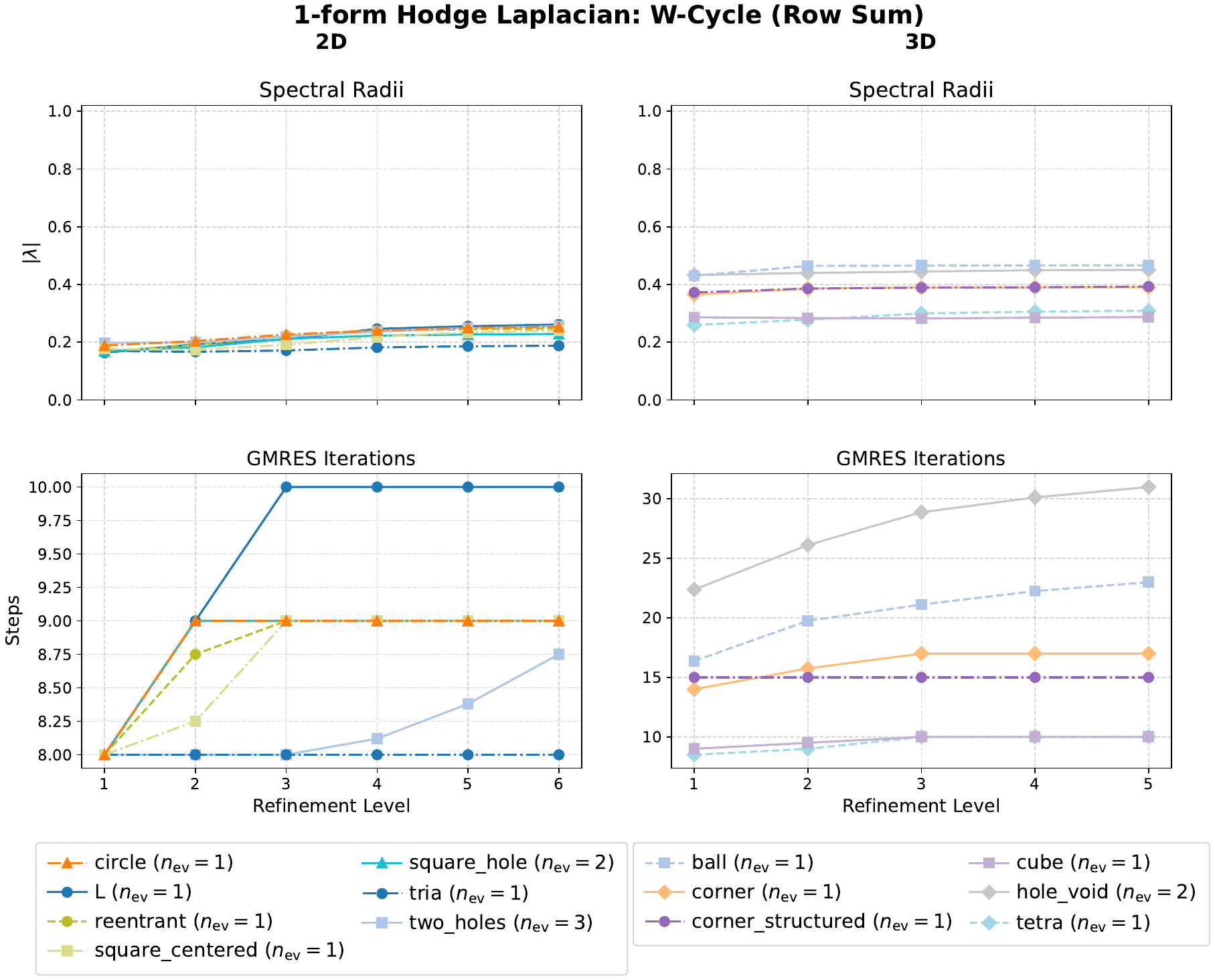}

    \caption{Convergence of the multigrid method for the Hodge-Laplacian on 1-forms in 2D and 3D with row-sum mass-lumping (W-cycle).}
    \label{fig:laplace_2d_mass_w}
\end{figure}

\begin{figure}[p]
    \centerfloat
    \centering
    \fullplot{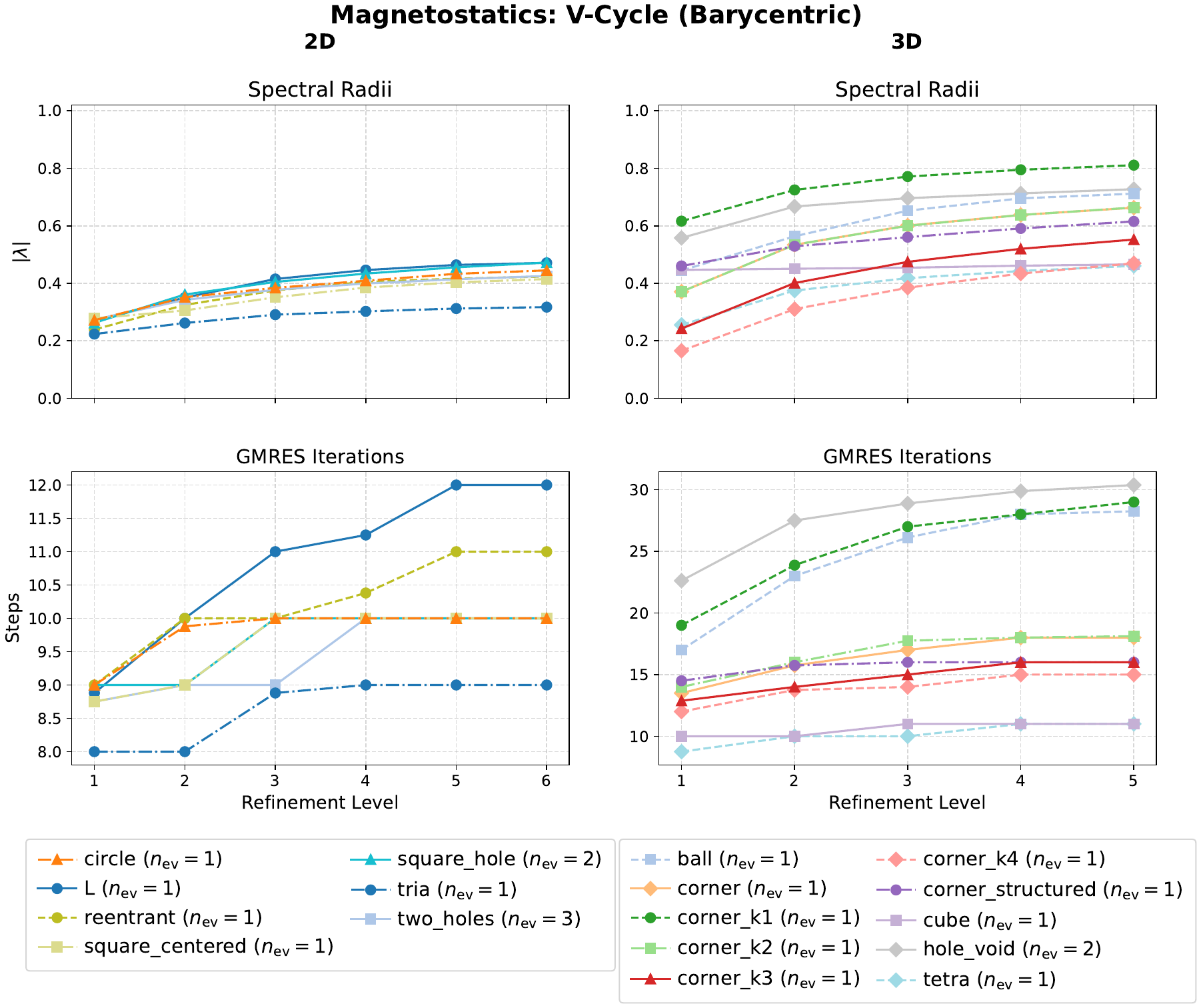}

    \caption{Convergence of the multigrid method for the magnetostatics problem in 2D and 3D with barycentric duals (V-cycle). Here, $k_S$ denotes $S$ post-smoothening steps.}
    \label{fig:mag_2d_bary}
\end{figure}

\begin{figure}[p]
    \centerfloat
    \centering
    \fullplot{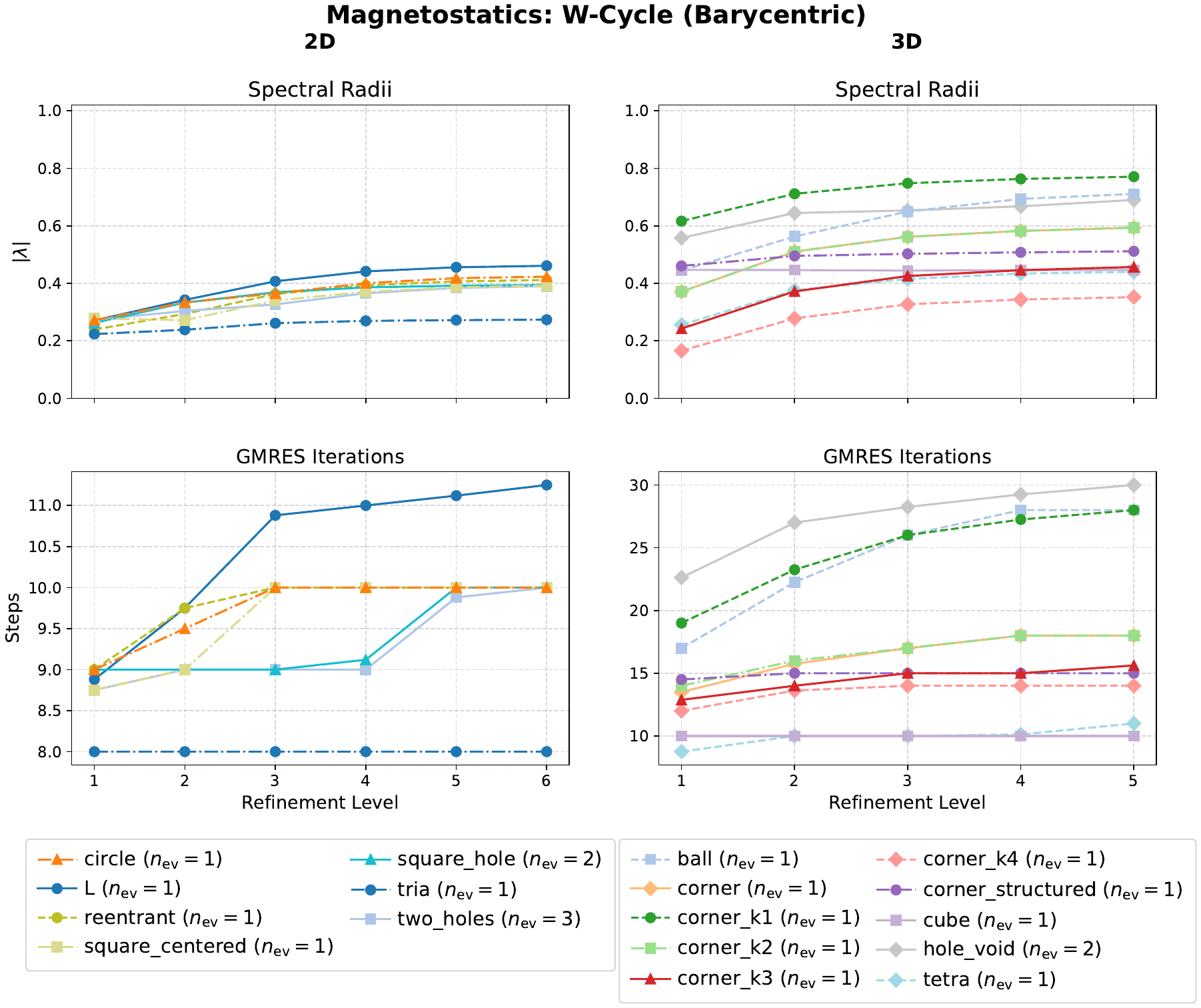}

    \caption{Convergence of the multigrid method for the magnetostatics problem in 2D and 3D with barycentric duals (W-cycle). Here, $kS$ denotes $S$ post-smoothening steps.}
    \label{fig:mag_2d_bary_w}
\end{figure}

\begin{figure}[p]
    \centerfloat
    \centering
    \fullplot{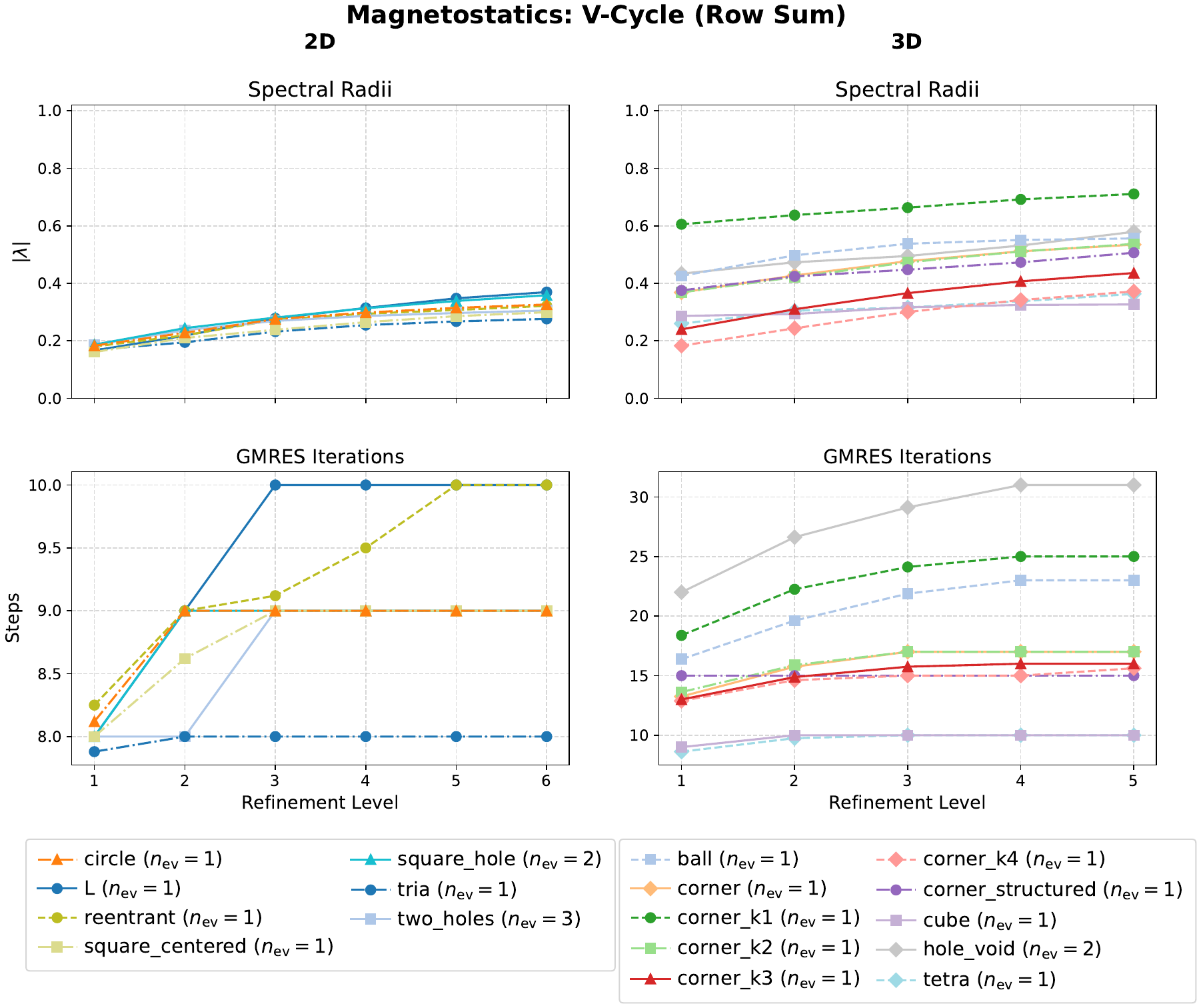}

    \caption{Convergence of the multigrid method for the magnetostatics problem in 2D and 3D with row-sum mass-lumping (V-cycle). Here, \_kS denotes $S$ post-smoothening steps.}
    \label{fig:mag_2d_mass}
\end{figure}

\begin{figure}[p]
    \centerfloat
    \centering
    \fullplot{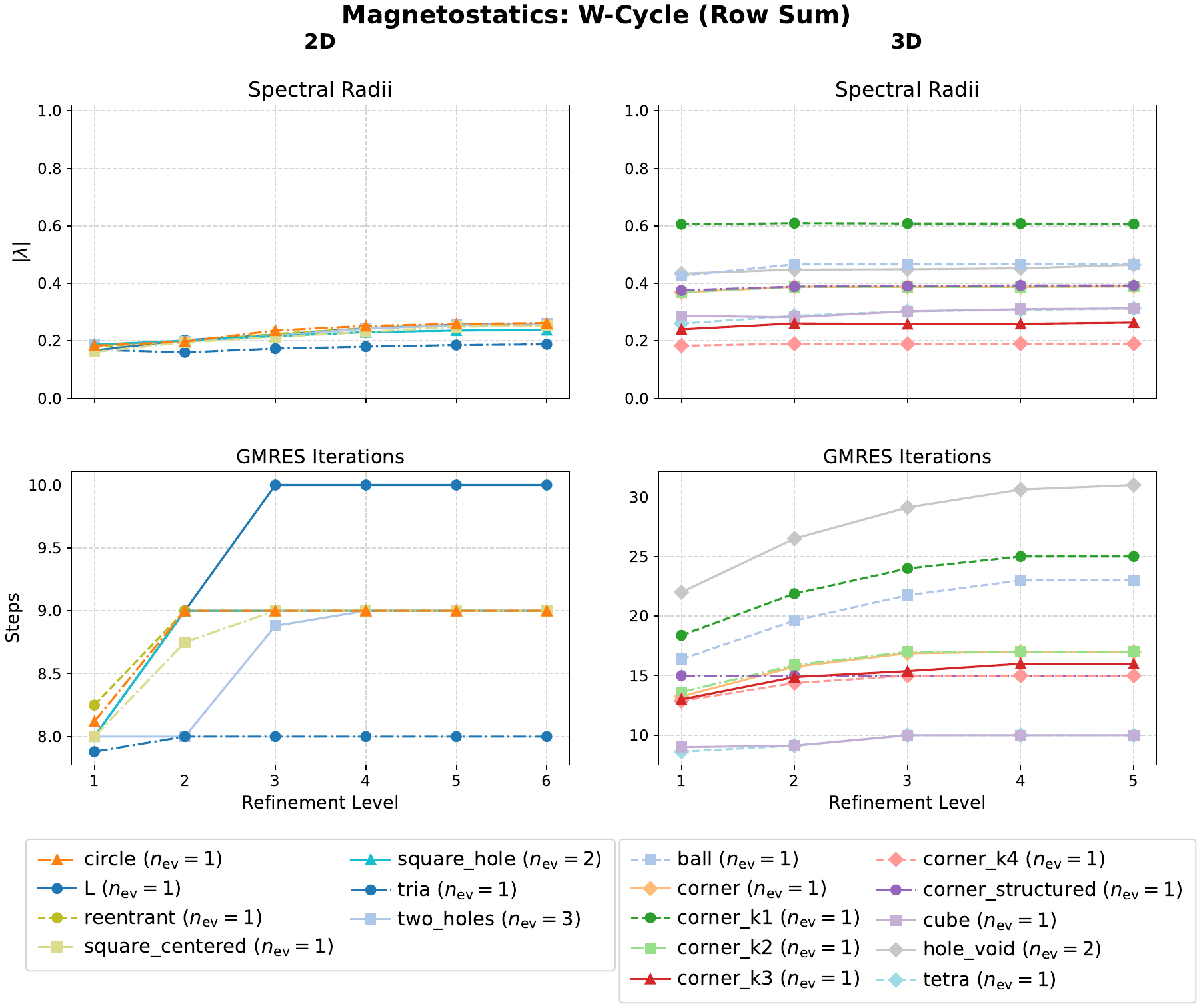}

    \caption{Convergence of the multigrid method for the magnetostatics problem in 2D and 3D with row-sum mass-lumping (W-cycle). Here, \_kS denotes $S$ post-smoothening steps.}
    \label{fig:mag_2d_mass_w}
\end{figure}

\begin{figure}[p]
    \centerfloat
    \centering
    \fullplot{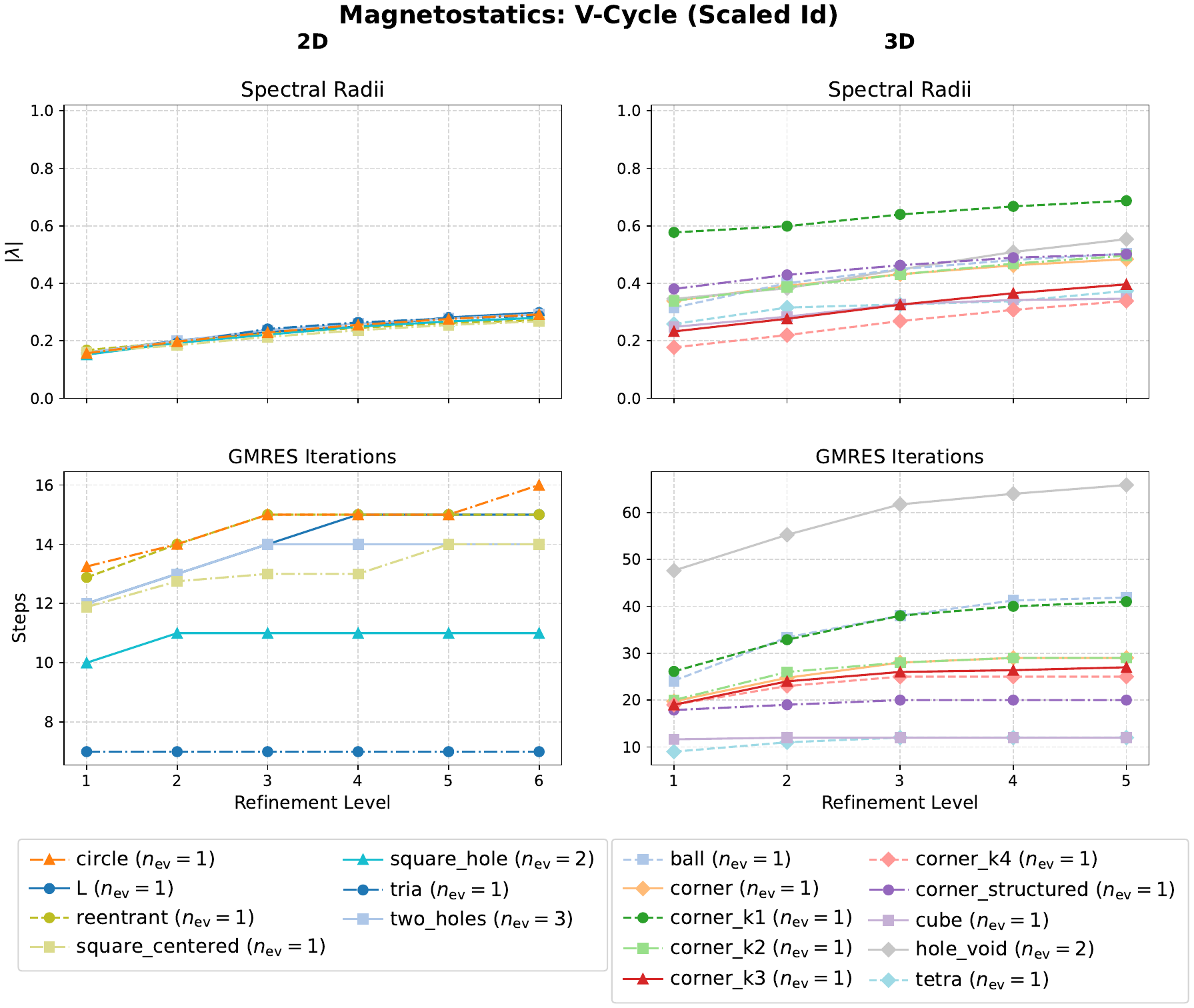}

    \caption{Convergence of the multigrid method for the magnetostatics problem in 2D and 3D with the scaled identity as the mass-lumping (V-cycle). Here, \_kS denotes $S$ post-smoothening steps.}
    \label{fig:mag_2d_scaledid}
\end{figure}

\begin{figure}[p]
    \centerfloat
    \centering
    \fullplot{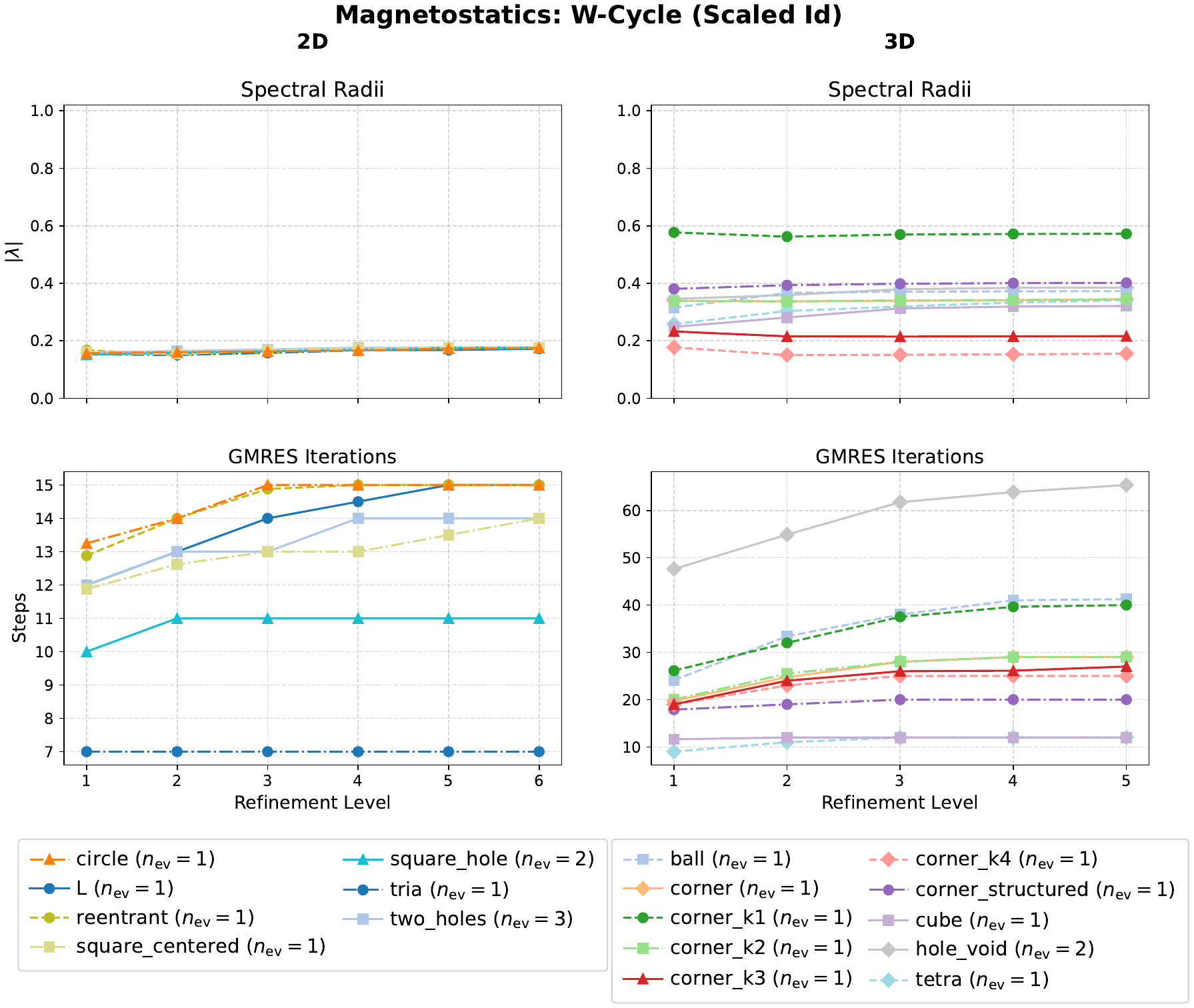}

    \caption{Convergence of the multigrid method for the magnetostatics problem in 2D and 3D with the scaled identity as the mass-lumping (W-cycle). Here, \_kS denotes $S$ post-smoothening steps.}
    \label{fig:mag_2d_scaledid_w}
\end{figure}

\clearpage

\section{Conclusion}
In conclusion, we have presented a new multigrid approach for Hodge-Dirac operators and mixed formulations of the Hodge-Laplacian, as well as a saddle-point problem arising from magnetostatics within the framework of differential forms. 

Its central feature is the use of transforming smoothers applied to mass-lumped systems, whose mass matrices possess explicit inverses and whose algebraic structure replicates desirable properties on the discrete level. The resulting transformed systems exhibit a block-triangular structure with positive definite diagonal blocks, making them amenable to smoothing, in contrast to the original systems, where standard schemes like Jacobi or Gauss-Seidel are not applicable.

Under mild conditions on the mass-lumping, we established stability of the mass-lumped problem, and by extension spectral equivalence to the FEEC problem.

Numerical experiments demonstrate that the proposed multigrid methods, when employed as preconditioners for \texttt{GMRES}, perform well in practice. Moreover, tests on meshes with nontrivial topologies show that the method remains robust in the presence of harmonic forms. These observations motivate further investigation of extensions (for example, to Stokes problems or problems with coefficient jumps) as well as a rigorous theoretical study encompassing a convergence theory for the multigrid scheme and a proof establishing uniform bounds on the \texttt{GMRES} iteration counts.

\bibliographystyle{plainnat}
\bibliography{sources}

\appendix

\end{document}

%% file: macros.tex
\newcommand{\R}{\ensuremath{\mathbb{R}}}

\DeclarePairedDelimiter{\Paren}{(}{)}
\DeclarePairedDelimiter{\Bracket}{[}{]}
\DeclarePairedDelimiter{\Brace}{\{}{\}}

\makeatletter
\newcommand*{\centerfloat}{%
  \parindent \z@
  \leftskip \z@ \@plus 1fil \@minus \textwidth
  \rightskip\leftskip
  \parfillskip \z@skip}
\makeatother

\NewDocumentCommand{\Oh}{som}{%
  \ensuremath{\mathcal{O}\IfBooleanTF{#1}{%
    \Paren*{#3}%          % *-version => auto-size
  }{%
    \IfNoValueTF{#2}{%
      \Paren{#3}%         % no optional => normal size
    }{%
      \Paren[#2]{#3}%     % optional => \big, \Big, etc.
    }%
  }%
}}

\NewDocumentCommand{\Om}{som}{%
\ensuremath{\Omega\IfBooleanTF{#1}{%
    \Paren*{#3}%
  }{%
    \IfNoValueTF{#2}{%
      \Paren{#3}%
    }{%
      \Paren[#2]{#3}%
    }%
  }%
}}

\NewDocumentCommand{\Th}{som}{%
\ensuremath{\Theta\IfBooleanTF{#1}{%
    \Paren*{#3}%
  }{%
    \IfNoValueTF{#2}{%
      \Paren{#3}%
    }{%
      \Paren[#2]{#3}%
    }%
  }%
}}

\NewDocumentCommand{\oh}{som}{%
\ensuremath{o\IfBooleanTF{#1}{%
    \Paren*{#3}%
  }{%
    \IfNoValueTF{#2}{%
      \Paren{#3}%
    }{%
      \Paren[#2]{#3}%
    }%
  }%
}}

\NewDocumentCommand{\om}{som}{%
\ensuremath{\omega\IfBooleanTF{#1}{%
    \Paren*{#3}%
  }{%
    \IfNoValueTF{#2}{%
      \Paren{#3}%
    }{%
      \Paren[#2]{#3}%
    }%
  }%
}}

\NewDocumentCommand{\Prob}{som}{%
\ensuremath{\mathbb{P}% or \Pr, if you prefer
  \IfBooleanTF{#1}{%
    \Bracket*{#3}%
  }{%
    \IfNoValueTF{#2}{%
      \Bracket{#3}%
    }{%
      \Bracket[#2]{#3}%
    }%
  }%
}}

\NewDocumentCommand{\E}{som}{%
\ensuremath{\mathbb{E}
  \IfBooleanTF{#1}{%
    \Bracket*{#3}%
  }{%
    \IfNoValueTF{#2}{%
      \Bracket{#3}%
    }{%
      \Bracket[#2]{#3}%
    }%
  }%
}}

\NewDocumentCommand{\Max}{s o m m}{%
\ensuremath{\max
  \IfBooleanTF{#1}{%
    \Brace*{#3,\, #4}%
  }{%
    \IfNoValueTF{#2}{%
      \Brace{#3,\, #4}%
    }{%
      \Brace[#2]{#3,\, #4}
    }%
  }%
}}

\NewDocumentCommand{\Min}{s o m m}{%
\ensuremath{\min
  \IfBooleanTF{#1}{%
    \Brace*{#3,\, #4}%
  }{%
    \IfNoValueTF{#2}{%
      \Brace{#3,\, #4}%
    }{%
      \Brace[#2]{#3,\, #4}
    }%
  }%
}}

\NewDocumentCommand{\Exp}{som}{%
\ensuremath{\exp
  \IfBooleanTF{#1}{%
    \Paren*{#3}%
  }{%
    \IfNoValueTF{#2}{%
      \Paren{#3}%
    }{%
      \Paren[#2]{#3}%
    }%
  }%
}}

\NewDocumentCommand{\Log}{som}{%
\ensuremath{\log
  \IfBooleanTF{#1}{%
    \Paren*{#3}%
  }{%
    \IfNoValueTF{#2}{%
      \Paren{#3}%
    }{%
      \Paren[#2]{#3}%
    }%
  }%
}}

\NewDocumentCommand{\Var}{som}{%
\ensuremath{\mathrm{Var}
  \IfBooleanTF{#1}{%
    \Bracket*{#3}%
  }{%
    \IfNoValueTF{#2}{%
      \Bracket{#3}%
    }{%
      \Bracket[#2]{#3}%
    }%
  }%
}}

\NewDocumentCommand{\SetBuilder}{s o m m}{%
\ensuremath{\IfBooleanTF{#1}{%
    \Brace*{\,#3 : #4\,}%
  }{%
    \IfNoValueTF{#2}{%
      \Brace{\,#3 : #4\,}%
    }{%
      \Brace[#2]{\,#3 : #4\,}%
    }%
  }%
}}

\renewcommand{\phi}{\ensuremath{\varphi}}

\DeclareMathOperator{\GRAD}{{\textnormal{grad}}}
\DeclareMathOperator{\CURL}{{\textnormal{curl}}}
\DeclareMathOperator{\DIV}{{\textnormal{div}}}
\DeclareMathOperator{\ran}{{\textnormal{ran}}}
\DeclareMathOperator{\tril}{{\textnormal{tril}}}
\DeclareMathOperator{\triu}{{\textnormal{triu}}}
\DeclareMathOperator{\Id}{{\textnormal{Id}}}
\newcommand{\DD}{{\textnormal{D}}}

\newcommand{\inner}[2]{\left\langle #1, #2 \right\rangle}

\NewDocumentCommand{\XLambdak}{ O{} O{} }{\ensuremath{{#1}\Lambda^{#2}\left(\Omega\right)}}
\NewDocumentCommand{\ringWhitneyForms}{ o O{h} }{\ensuremath{\mathring{V}\IfValueT{#1}{_{#1}}\IfValueT{#2}{^{#2}}}}

\NewDocumentCommand{\hilbertSpaceFE}{ o O{L^2} O{h} }{\ensuremath{\left(\ringWhitneyForms[#1][#3], \inner{\cdot}{\cdot}_{\XLambdak[#2][#1]}\right)}}
\NewDocumentCommand{\hilbertSpaceDEC}{ o o O{h} }{\ensuremath{\left(\ringWhitneyForms[#1][#3], \innerc{\cdot}{\cdot}[\IfValueTF{#2}{#2}{\IfValueTF{#1}{#1}{}}]\right)}}
\NewDocumentCommand{\IsoFEDEC}{o O{h} }{\ensuremath{\mathsf{I}\IfValueT{#1}{^{#1}}\IfValueT{#2}{_{#2}}}}
\NewDocumentCommand{\IsoSobolevFEDEC}{o O{h} }{\ensuremath{\bar{\mathsf{I}}\IfValueT{#1}{^{#1}}\IfValueT{#2}{_{#2}}}}

\RenewDocumentCommand{\d}{ O{} }{\ensuremath{\mathsf{d}\IfValueT{#1}{^{#1}}}}
\RenewDocumentCommand{\dh}{ O{} }{\ensuremath{\mathsf{d}\IfValueT{#1}{^{#1}}_h}}
\NewDocumentCommand{\deltaML}{ O{} }{\ensuremath{\tilde{\delta}^h\IfValueT{#1}{_{#1}}}}

\NewDocumentCommand{\IdMat}{}{\ensuremath{\mathbf{I}}}
\NewDocumentCommand{\dhMat}{ O{} }{\ensuremath{\mathbf{d}\IfValueT{#1}{^{#1}}_h}}
\NewDocumentCommand{\deltaMLMat}{ O{} }{\ensuremath{\widetilde{\boldsymbol{\delta}}^h\IfValueT{#1}{_{#1}}}}
\NewDocumentCommand{\deltaFE}{ O{} }{\ensuremath{\delta^h\IfValueT{#1}{_{#1}}}}

\NewDocumentCommand{\rieszFE}{ o O{h} }{\ensuremath{\mathsf{R}\IfValueT{#1}{^{#1}}\IfValueT{#2}{_{#2}}}}
\NewDocumentCommand{\rieszDEC}{ o O{h} }{\ensuremath{\tilde{\mathsf{R}}\IfValueT{#1}{^{#1}}\IfValueT{#2}{_{#2}}}}

\NewDocumentCommand{\massFE}{ O{} }{\ensuremath{\mathbf{M}_{#1}}}
\NewDocumentCommand{\massDEC}{ O{} }{\ensuremath{\tilde{\mathbf{M}}_{#1}}}

\NewDocumentCommand{\InitOpFamily}{ m m }{%
  \expandafter\NewDocumentCommand\csname #1BFEEC\endcsname{ m m o O{h} }{\ensuremath{\mathcal{#2}\IfNoValueF{##3}{^{##3}}_{##4}\left( ##1, ##2 \right)}}%
  \expandafter\NewDocumentCommand\csname #1BDEC\endcsname{ m m o O{h} }{\ensuremath{\widetilde{\mathcal{#2}}\IfNoValueF{##3}{^{##3}}_{##4}\left( ##1, ##2 \right)}}%
  
  \expandafter\NewDocumentCommand\csname #1GalerkinOpFEEC\endcsname{ o O{h} }{\ensuremath{\mathbb{#2}\IfNoValueF{##1}{^{##1}}_{##2}}}%
  \expandafter\NewDocumentCommand\csname #1GalerkinOpDEC\endcsname{ o O{h} }{\ensuremath{\widetilde{\mathbb{#2}}\IfNoValueF{##1}{^{##1}}_{##2}}}%
  
  \expandafter\NewDocumentCommand\csname #1SobolevOpFEEC\endcsname{ o O{h} }{\ensuremath{\overline{\mathbb{#2}}\IfNoValueF{##1}{^{##1}}_{##2}}}%
  \expandafter\NewDocumentCommand\csname #1SobolevOpDEC\endcsname{ o O{h} }{\ensuremath{\overline{\widetilde{\mathbb{#2}}}\IfNoValueF{##1}{^{##1}}_{##2}}}%
  
  \expandafter\NewDocumentCommand\csname #1OpFEEC\endcsname{ o O{h} }{\ensuremath{\mathsf{#2}\IfNoValueF{##1}{^{##1}}_{##2}}}%
  \expandafter\NewDocumentCommand\csname #1OpDEC\endcsname{ o O{h} }{\ensuremath{\widetilde{\mathsf{#2}}\IfNoValueF{##1}{^{##1}}_{##2}}}%
}
\NewDocumentCommand{\InitMatOpFamily}{ m m }{%
  \expandafter\NewDocumentCommand\csname #1MatOpFEEC\endcsname{ o O{h} }{\ensuremath{\mathbf{#2}\IfNoValueF{##1}{^{##1}}_{##2}}}%
  \expandafter\NewDocumentCommand\csname #1MatOpDEC\endcsname{ o O{h} }{\ensuremath{\widetilde{\mathbf{#2}}\IfNoValueF{##1}{^{##1}}_{##2}}}%
}

\InitMatOpFamily{dirac}{D}
\InitMatOpFamily{laplace}{L}
\InitMatOpFamily{mag}{M}
\InitMatOpFamily{generic}{G}

\NewDocumentCommand{\transformedSystemMat}{ O{h} }{\ensuremath{\mathbf{T}_{#1}}}
\NewDocumentCommand{\transformedSystemSplitMat}{ O{h} }{\ensuremath{\mathbf{Q}_{#1}}}

\InitOpFamily{dirac}{D}
\InitOpFamily{laplace}{L}
\InitOpFamily{mag}{M}
\InitOpFamily{generic}{G}

\NewDocumentCommand{\InitDiscSpaceFamily}{ m m }{%
  \expandafter\NewDocumentCommand\csname #1DiscWFEEC\endcsname{ o O{h} }{\ensuremath{#2_{##2}\IfValueT{##1}{^{##1}}}}%
  \expandafter\NewDocumentCommand\csname #1DiscWSobolevFEEC\endcsname{ o O{h} O{H} }{\ensuremath{#2_{##2,##3}\IfValueT{##1}{^{##1}}}}%
  \expandafter\NewDocumentCommand\csname #1DiscWDEC\endcsname{ o O{h} }{\ensuremath{\tilde{#2}_{##2}\IfValueT{##1}{^{##1}}}}%
  \expandafter\NewDocumentCommand\csname #1DiscWSobolevDEC\endcsname{ o O{h} O{H} }{\ensuremath{\tilde{#2}_{##2,##3}\IfValueT{##1}{^{##1}}}}%
}

\InitDiscSpaceFamily{laplace}{W}
\InitDiscSpaceFamily{mag}{U}
\InitDiscSpaceFamily{generic}{H}

\NewDocumentCommand{\genericIso}{O{h}}{%
  \ensuremath{\mathsf{J}\IfValueT{#1}{_{#1}}}%
}
\NewDocumentCommand{\genericRieszFE}{O{h}}{%
  \ensuremath{\mathsf{H}\IfValueT{#1}{_{#1}}}%
}
\NewDocumentCommand{\genericRieszDEC}{O{h}}{%
  \ensuremath{\tilde{\mathsf{H}}\IfValueT{#1}{_{#1}}}%
}

\NewDocumentCommand{\prolong}{ O{h} }{\ensuremath{\hat{\mathsf{P}}_{#1}}}
\NewDocumentCommand{\restrict}{ O{h} }{\ensuremath{\check{\mathsf{P}}_{#1}}}

\NewDocumentCommand{\transformSymbol}{}{\ensuremath{\mathsf{T}}}
\NewDocumentCommand{\transformR}{ O{h} }{\ensuremath{{\transformSymbol}^{\mathrm{R}}_{#1}}}
\NewDocumentCommand{\transformL}{ O{h} }{\ensuremath{{\transformSymbol}^{\mathrm{L}}_{#1}}}
\NewDocumentCommand{\transformedSystem}{ O{h} }{\ensuremath{\mathsf{T}_{#1}}}
\NewDocumentCommand{\transformedSystemSplit}{ O{h} }{\ensuremath{\mathsf{Q}_{#1}}}

\NewDocumentCommand{\cochain}{ O{} O{\mathcal{T}} O{} }{\ensuremath{C_{#3}^{#1}\left(#2\right)}}
\NewDocumentCommand{\ringCochain}{ O{} O{\mathcal{T}} O{} }{\ensuremath{\mathring{C}_{#3}^{#1}\left(#2\right)}}

\NewDocumentCommand{\cobound}{ O{} }{\ensuremath{\partial^{#1}}}

\NewDocumentCommand{\whitneyForms}{ O{} }{\ensuremath{V_{#1}^h}}

\NewDocumentCommand{\deRham}{ O{} }{\ensuremath{\mathcal{R}^{#1}}}
\NewDocumentCommand{\whitneyMap}{ O{} }{\ensuremath{\mathcal{W}^{#1}}}

\NewDocumentCommand{\normm}{m O{} }{\ensuremath{{\left\vert\kern-0.25ex\left\vert\kern-0.25ex\left\vert #1 
    \right\vert\kern-0.25ex\right\vert\kern-0.25ex\right\vert}}_{#2}}
\NewDocumentCommand{\innerc}{m m O{} }{\ensuremath{\left\llbracket #1, #2 \right\rrbracket}_{#3}}
\NewDocumentCommand{\innerforms}{ m m O{L^2} O{} }{\ensuremath{\inner{#1}{#2}_{\XLambdak[#3][#4]}}}
\NewDocumentCommand{\formNorm}{ m O{L^2} O{} }{\ensuremath{\norm{#1}_{\XLambdak[#2][#3]}}}

\NewDocumentCommand{\perpML}{}{\ensuremath{\perp_\textnormal{ML}}}
\NewDocumentCommand{\perpFEEC}{}{\ensuremath{\perp_\textnormal{FEEC}}}

\NewDocumentCommand{\cycles}{ o O{h} }{\ensuremath{\mathfrak{Z}\IfValueT{#1}{^{#1}}\IfValueT{#2}{_{#2}}}}
\NewDocumentCommand{\boundaries}{ o O{h} }{\ensuremath{\mathfrak{B}\IfValueT{#1}{^{#1}}\IfValueT{#2}{_{#2}}}}
\NewDocumentCommand{\harmonics}{ o O{h} }{\ensuremath{\mathfrak{H}\IfValueT{#1}{^{#1}}\IfValueT{#2}{_{#2}}}}

\NewDocumentCommand{\cyclesML}{ o O{h} }{\ensuremath{\tilde{\mathfrak{Z}}\IfValueT{#1}{^{#1}}\IfValueT{#2}{_{#2}}}}
\NewDocumentCommand{\boundariesML}{ o O{h} }{\ensuremath{\tilde{\mathfrak{B}}\IfValueT{#1}{^{#1}}\IfValueT{#2}{_{#2}}}}
\NewDocumentCommand{\harmonicsML}{ o O{h} }{\ensuremath{\tilde{\mathfrak{H}}\IfValueT{#1}{^{#1}}\IfValueT{#2}{_{#2}}}}

\NewDocumentCommand{\projH}{ o O{h} }{\ensuremath{\Pi}_{#2}\IfValueT{#1}{^{#1}}}

%% file: theoremsetup.tex
\numberwithin{equation}{section}

\usepackage[svgnames, dvipsnames]{xcolor}
\usepackage[amsmath,thmmarks]{ntheorem}
\usepackage{amsmath,amssymb,amsfonts,mathrsfs}
\usepackage{thmtools}
\theoremstyle{break}
\theorempreskipamount=\parskip
\theorempostskipamount=\parskip
\declaretheorem[title = Theorem, numberwithin = section]{theorem}

\declaretheorem[title = Lemma, sibling = theorem, style = break]{lemma}
\declaretheorem[title = Definition, sibling = theorem, style = break]{definition}
\declaretheorem[title = Proposition, sibling = theorem, style = break]{proposition}
\declaretheorem[title = Assumption, numberwithin = section, style = break]{assumption}
\declaretheorem[title = Remark, sibling = theorem, style = plain]{remark}

\theoremstyle{nonumberplain}
\theorembodyfont{\normalfont}
\theoremsymbol{\ensuremath{\square}}
\newtheorem{proof}{Proof}
\crefname{theorem}{theorem}{theorems}
\Crefname{theorem}{Theorem}{Theorems}

\crefname{lemma}{lemma}{lemmas}
\Crefname{lemma}{Lemma}{Lemmas}

\crefname{corollary}{corollary}{corollaries}
\Crefname{corollary}{Corollary}{Corollaries}

\crefname{proposition}{proposition}{propositions}
\Crefname{proposition}{Proposition}{Propositions}

\crefname{claim}{claim}{claims}
\Crefname{claim}{Claim}{Claims}

\crefname{definition}{definition}{definitions}
\Crefname{definition}{Definition}{Definitions}

\crefname{example}{example}{examples}
\Crefname{example}{Example}{Examples}

\crefname{remark}{remark}{remarks}
\Crefname{remark}{Remark}{Remarks}

\crefname{assumption}{assumption}{assumptions}
\Crefname{assumption}{Assumption}{Assumptions}

\crefname{Rule}{rule}{rules}
\Crefname{Rule}{Rule}{Rules}

%% file: sources.bib
@book{BB13,
  title = {Mixed Finite Element Methods and Applications},
  ISBN = {9783642365195},
  ISSN = {0179-3632},
  url = {http://dx.doi.org/10.1007/978-3-642-36519-5},
  DOI = {10.1007/978-3-642-36519-5},
  journal = {Springer Series in Computational Mathematics},
  publisher = {Springer Berlin Heidelberg},
  author = {Boffi,  Daniele and Brezzi,  Franco and Fortin,  Michel},
  year = {2013}
}

@Book{Arnold2018,
  author    = {Arnold, Douglas N.},
  publisher = {Society for Industrial and Applied Mathematics},
  title     = {Finite Element Exterior Calculus},
  year      = {2018},
  isbn      = {9781611975543},
  month     = dec,
  doi       = {10.1137/1.9781611975543},
}

@Article{Leopardi2016,
  author    = {Leopardi, Paul and Stern, Ari},
  journal   = {SIAM Journal on Numerical Analysis},
  title     = {The Abstract Hodge--Dirac Operator and Its Stable Discretization},
  year      = {2016},
  issn      = {1095-7170},
  month     = jan,
  number    = {6},
  pages     = {3258--3279},
  volume    = {54},
  doi       = {10.1137/15m1047684},
  publisher = {Society for Industrial & Applied Mathematics (SIAM)},
}

@Misc{GUP25,
  author        = {Johnny Guzmán and Pratyush Potu},
  title         = {A Framework for Analysis of DEC Approximations to Hodge-Laplacian Problems using Generalized Whitney Forms},
  year          = {2025},
  archiveprefix = {arXiv},
  eprint        = {2505.08934},
  primaryclass  = {math.NA},
  url           = {https://arxiv.org/abs/2505.08934},
}

@article{mfem,
  title = {MFEM: A modular finite element methods library},
  volume = {81},
  ISSN = {0898-1221},
  url = {http://dx.doi.org/10.1016/j.camwa.2020.06.009},
  DOI = {10.1016/j.camwa.2020.06.009},
  journal = {Computers \& Mathematics with Applications},
  publisher = {Elsevier BV},
  author = {Anderson,  Robert and Andrej,  Julian and Barker,  Andrew and Bramwell,  Jamie and Camier,  Jean-Sylvain and Cerveny,  Jakub and Dobrev,  Veselin and Dudouit,  Yohann and Fisher,  Aaron and Kolev,  Tzanio and Pazner,  Will and Stowell,  Mark and Tomov,  Vladimir and Akkerman,  Ido and Dahm,  Johann and Medina,  David and Zampini,  Stefano},
  year = {2021},
  month = jan,
  pages = {42–74}
}

@MastersThesis{FRI23,
  author = 	 {S. Fritschi},
  title = 	 {Discontinuous Galerkin methods for the Dirac operator},
  school = 	 {D-MATH, ETH Zürich},
  year = 	 2023,
  type = 	 {MSc Thesis in CSE},
  url = {https://people.math.ethz.ch/~hiptmair/StudentProjects/Fritschi.Severin/Thesis_DG_Dirac_Curl.pdf}
}

@article{CHR18,
    AUTHOR = {Christiansen, Snorre H.},
     TITLE = {On eigenmode approximation for {D}irac equations: differential
              forms and fractional {S}obolev spaces},
   JOURNAL = {Math. Comp.},
  FJOURNAL = {Mathematics of Computation},
    VOLUME = 87,
      YEAR = 2018,
    NUMBER = 310,
     PAGES = {547--580},
      ISSN = {0025-5718},
   MRCLASS = {65N30 (65N25 81Q05)},
  MRNUMBER = 3739210,
MRREVIEWER = {Christian Wieners},
       DOI = {10.1090/mcom/3233},
       URL = {https://doi.org/10.1090/mcom/3233},
}

@article{Hiptmair2006,
  title = {Operator Preconditioning},
  volume = {52},
  ISSN = {0898-1221},
  url = {http://dx.doi.org/10.1016/j.camwa.2006.10.008},
  DOI = {10.1016/j.camwa.2006.10.008},
  number = {5},
  journal = {Computers \& Mathematics with Applications},
  publisher = {Elsevier BV},
  author = {Hiptmair,  R.},
  year = {2006},
  month = sep,
  pages = {699–706}
}

@Article{Wittum1990,
  author    = {Wittum, Gabriel},
  journal   = {Numerische Mathematik},
  title     = {On the convergence of multi-grid methods with transforming smoothers: Theory with applications to the Navier-Stokes equations},
  year      = {1990},
  issn      = {0945-3245},
  month     = dec,
  number    = {1},
  pages     = {15--38},
  volume    = {57},
  doi       = {10.1007/bf01386394},
  publisher = {Springer Science and Business Media LLC},
}

@mastersthesis{Dabetic2025,
  author       = {Radovan Dabeti{\'c}},
  title        = {Multigrid Solver for Boundary Value Problems for the Dirac Operator},
  school       = {ETH Zurich, Department of Mathematics (D-MATH)},
  year         = {2025},
  type         = {Master's thesis},
  address      = {Z{\"u}rich, Switzerland},
  month        = sep,
  day          = {11},
  url          = {https://people.math.ethz.ch/~hiptmair/StudentProjects/Dabetic.Radovan/MScThesis_RadovanDabetic.pdf}
}

@article{CW18,
  title = {MultiGrid Preconditioners for Mixed Finite Element Methods of the Vector Laplacian},
  volume = {77},
  ISSN = {1573-7691},
  url = {http://dx.doi.org/10.1007/s10915-018-0697-7},
  DOI = {10.1007/s10915-018-0697-7},
  number = {1},
  journal = {Journal of Scientific Computing},
  publisher = {Springer Science and Business Media LLC},
  author = {Chen,  Long and Wu,  Yongke and Zhong,  Lin and Zhou,  Jie},
  year = {2018},
  month = mar,
  pages = {101–128}
}

@Book{Hackbusch1985,
  author    = {Hackbusch, Wolfgang},
  publisher = {Springer Berlin Heidelberg},
  title     = {Multi-Grid Methods and Applications},
  year      = {1985},
  isbn      = {9783662024270},
  doi       = {10.1007/978-3-662-02427-0},
  issn      = {0179-3632},
  journal   = {Springer Series in Computational Mathematics},
}

@article{Wittum1989,
  title = {Multi-grid methods for stokes and navier-stokes equations: Transforming smoothers: algorithms and numerical results},
  volume = {54},
  ISSN = {0945-3245},
  url = {http://dx.doi.org/10.1007/BF01396361},
  DOI = {10.1007/bf01396361},
  number = {5},
  journal = {Numerische Mathematik},
  publisher = {Springer Science and Business Media LLC},
  author = {Wittum,  G.},
  year = {1989},
  month = sep,
  pages = {543–563}
}

@inbook{BD79,
  title = {Multigrid Solutions to Elliptic Flow Problems},
  ISBN = {9780125460507},
  url = {http://dx.doi.org/10.1016/B978-0-12-546050-7.50008-3},
  DOI = {10.1016/b978-0-12-546050-7.50008-3},
  booktitle = {Numerical Methods for Partial Differential Equations},
  publisher = {Elsevier},
  author = {Brandt,  Achi and Dinar,  Nathan},
  year = {1979},
  pages = {53–147}
}

@Misc{Spectra,
  author       = {Qiu, Yixuan},
  title        = {Spectra: a {C}++ library for large scale eigenvalue problems},
  howpublished = {GitHub repository},
  url          = {https://github.com/yixuan/spectra},
  note         = {Accessed 2026-06-07},
}
